\newtheorem{theorem}{Theorem}[section]
\newtheorem{remark}[theorem]{Remark}
\newtheorem{assumption}[theorem]{Assumption}
\newtheorem{definition}[theorem]{Definition}
\newtheorem{lemma}[theorem]{Lemma}
\newtheorem{proposition}[theorem]{Proposition}
\newtheorem{corollary}[theorem]{Corollary}
\theoremstyle{definition}
\newtheorem{example}{Example}
\newtheorem{excont}{Example}
\def \cB{{\mathcal B}}
\def \cC{{\mathcal C}}
\def \cF{{\mathcal F}}
\def \cG{{\mathcal G}}
\def \cI{{\mathcal I}}
\def \cL{{\mathcal L}}
\def \cO{{\mathcal O}}
\def \cS{\mathcal{S}}
\def \cT{\mathcal{T}}
\def \E{\mathsf{E}}
\def \P{\mathsf{P}}
\def \R{\mathbb{R}}
\def \ve{\varepsilon}
\def \tl{\tilde}
\DeclareMathOperator*{\esssup}{ess\,sup}
\DeclareMathOperator*{\argmax}{arg\,max}
\newcommand{\rom}[1]{\uppercase\expandafter{\romannumeral #1\relax}}
\newcommand\ind[1]{1_{#1}}
\newcommand\indd[1]{\ind{\{#1\}}}
\newcommand\lambdaSet{\mathcal{O}}
\newcommand\volatility{b}
\def \cadlag{c\`{a}dl\`{a}g\xspace}
\definecolor{brightmaroon}{rgb}{0.7, 0.23, 0.2}
\definecolor{magenta}{rgb}{0.4, 0.1, 0.5}
\newcommand\edt[1]{{#1}}
\title[Exit game]{Exit game with private information}
\author[H.D.~Kwon]{H. Dharma Kwon}
\author[J.~Palczewski]{Jan Palczewski}
\keywords{non-zero sum game, exit game, asymmetric information, optimal stopping, Nash equilibrium, declining market}
\address{H.D.~Kwon: Gies College of Business, University of Illinois at Urbana-Champaign,
Champaign, Illinois 61820, USA}
\email{\href{mailto:dhkwon@illinois.edu}{dhkwon@illinois.edu}}
\address{J.~Palczewski: School of Mathematics, University of Leeds, Woodhouse Lane, LS2 9JT Leeds, UK.}
\email{\href{mailto:j.palczewski@leeds.ac.uk}{j.palczewski@leeds.ac.uk}}
\date{\today}
\numberwithin{equation}{section}
\begin{document}
\begin{abstract}
The timing of strategic exit is one of the most important but difficult
business decisions, especially under competition and uncertainty.
Motivated by this problem, we examine a stochastic game of exit in
which players are uncertain about their competitor's exit value. We
construct an equilibrium for a large class of payoff flows driven
by a general one-dimensional diffusion. In the equilibrium, the players
employ sophisticated exit strategies involving both the state variable
and the posterior belief process. These strategies are specified explicitly
in terms of the problem data and a solution to an auxiliary optimal
stopping problem. The equilibrium we obtain is further shown to be
unique within a wide subclass of symmetric Bayesian equilibria. 
\end{abstract}

\maketitle

\section{Introduction}

The timing of strategic exit is one of the most important but difficult
business decisions. According to anecdotes and empirical studies,
many firms in declining industries miss the optimal time to exit and
amass substantial financial loss (Horn et al. \cite{Horn2006}, Elfenbein
and Knott \cite{Elfenbein2015}). Exit decisions are even more complicated
when the firms are uncertain about the future profits such as in the
cases of 7 declining industries studied by Harrigan \cite{Harrigan2003}.
Furthermore, firms are generally uncertain about their rival firms'
exit value from the outside option. Even though these two types of
uncertainty pose practical and managerial difficulties, there has
been a paucity of attempts to investigate their combined impact on
the exit strategy. The goal of this paper is to study an exit game
under both types of uncertainty and obtain an equilibrium exit strategy.

In the model that we examine, we incorporate two salient features
of an exit game: a stochastic profit stream and private random exit
values, both of which are realistic features of an exit game between
competing businesses. Initially, firms operate in a duopoly and earn
identical profit streams dependent on a one-dimensional diffusion
modelling economic factors. Each firm is allowed to exit at any point
in time, but the remaining firm becomes a monopolist and enjoys a
monopoly profit flow. \edt{The exiting firm obtains an exit value which
is its private information unknown to the rival firm. The exit value
incorporates the outside option for the firm as well as the cost of
shutting down the enterprise. The exit values of both firms have the
same distribution and are independent.}

\edt{
We assume that each firm’s profit stream is publicly known, as it
depends on the revenue and the public demand. This is a common assumption
in many game-theoretic models of duopoly exit games \cite{Ghemawat1985,Fudenberg1986,Murto2004,Steg2015,Georgiadis2019}.
The underlying dynamics of economic factors (the state
process) is a general one-dimensional diffusion. A firm’s exit value is private information, hidden from the rival
firm. This reflects the fact that a firm’s exit value depends on many
internal factors that are not observable by outsiders, such as alternative
business opportunities, salvage values \cite{Harrigan2003}, or even
managerial behavioural biases \cite{Elfenbein2015}. The uncertainty
about the rival’s exit value is also a standard assumption in many
economic models of exit games \cite{Riley1980,Nalebuff1985,Milgrom1985,Fudenberg1986}.

The first main result of the paper is to obtain a novel equilibrium.
Specifically, we obtain a perfect Bayesian equilibrium that is succinctly
characterised by two variables: the underlying state $X_{t}$ and
the belief $Y_{t}$. According to the equilibrium strategy, a player
of type $\theta_{i}$ exits when $(Y_t)$ falls below $\theta_i$ and $(X_t, Y_t)$ is in an explicitly given action region, see \eqref{eqn:tau_PBE}.
The belief process $(Y_{t})$ has the meaning of the maximum type of
the opponent that has remained in the game, i.e., a player believes
that his opponent's type  at time $t$ is less than $Y_{t}$.

The novel characteristic of the equilibrium lies in the complexity
of its strategies not found in the deterministic counterpart. The
value of $(Y_{t})$ depends on the history of the sample path $(X_{t})$,
so it is not a simple function of the current value of $(X_{t})$; see
Eq. \eqref{eqn:dynamics_Y}. Most of the extant models of exit decisions
prescribe either a deterministic timeline to exit in the deterministic
exit games \cite{Nalebuff1985,Ghemawat1985,Fudenberg1986} or a profit-threshold
policy in the case of a single-player model. In contrast, our results
suggest that a much more elaborate strategy is called for: the players
should continuously monitor the evolution of $(X_{t})$ and update their
beliefs $(Y_{t})$ regarding their opponent's type and exit as soon
as $(Y_{t})$ falls below their own type; see an alternative expression
for the strategy in (\ref{eqn:tau_form}).

To our knowledge, this is the first equilibrium solution obtained
for a stochastic exit game with a diffusive state variable and a continuously
distributed private type. In the deterministic model such as in Fudenberg
and Tirole \cite{Fudenberg1986}, the time variable is a sufficient
state variable, so the equilibrium generating process depends on time
alone. In contrast, in the stochastic game, the state of the market
evolves separately from the time variable, and hence, the dimensionality
of the problem increases. 

It is worthy of note that our
equilibrium is a natural extension of the known results in the extant
literature. Recall that our exit game model incorporates both a diffusive
state variable and asymmetric information. Previous studies have obtained
an equilibrium of exit games with one of the two features: either
a diffusive state variable \cite{Steg2015,Georgiadis2019} or private
types \cite{Milgrom1985,Fudenberg1986}. In this paper, we bridge
these two strands of literature by showing that our result coincides
with the extant results when one of the two features is absent; see
Section \ref{sec:Discussions}.

The striking feature of our equilibrium solution is that it is given
explicitly for a large class of underlying diffusion processes and
payoff functions, so it cannot be obtained by a guess-and-verify approach.
Instead, we only require that an auxiliary optimal stopping problem
of exit from a duopoly of one player has a solution of a threshold
type and the threshold depends continuously on the exit value; see
Section \ref{sec:single_player}. 

Our second main result concerns the possibility of other symmetric
equilibria. Non-zero sum games typically have multiple equilibria,
and identifying them is a formidable task (see Feinstein et al. \cite{Feinstein2022}
for recent results in discrete time games). Although uniqueness is
rarely studied in the continuous-time literature, we are able to demonstrate
that our exit game has a unique symmetric equilibrium in a large class
of symmetric equilibria in which the belief process
$(Y_{t})$ (or, more precisely, the generating process $(A_{t})$ in a one-to-one map correspondence with $(Y_{t})$) has a generalised
derivative that satisfies certain semi-continuity criteria (Thm.~\ref{thm:unique}). 

The proof of uniqueness is a significant mathematical result. There
are mathematical difficulties stemming from the continuum of player
types and the diffusive dynamics of the underlying state process.
The proof requires a combination of probabilistic and analytical methods,
and it demonstrates technical complexity involved in establishing
such results in a diffusive setting with asymmetric information. 
Further mathematical details about our approach are summarised in the beginning of Section \ref{sec:uniqeness}.

In the context of exit games, the uniqueness question is also of game-theoretic
interest. A war of attrition under incomplete information is known
to have, in general, a continuum of equilibria \cite{Riley1980,Nalebuff1985},
but there are variants of exit game models that have unique equilibria
due to special conditions \cite{Fudenberg1986,Ponsati1995}. This
paper adds to this strand of the literature by establishing uniqueness
results in an exit game with state diffusive dynamics.

In addition to the two main results, our paper also provides a new
framework for stopping games with continuously distributed private
types. In our equilibrium solution, each player employs a pure strategy
stopping time that depends on the private type. However, from a player's
perspective, the opponent's exit times resemble a mixed strategy \cite{Steg2015}, albeit with the mixing variable which is not uniform on $(0,1)$ but distributed as a player's type. Mathematically, a mixed stopping strategy can be represented as a randomised stopping time characterised by an increasing process adapted to a player's filtration (a generating
process) and a randomisation device which is independent from the
underlying randomness in the game (see, e.g., De Angelis et al. \cite[Def.~2.2]{DAMP2021}); the stopping time is defined as the first time that the generating process exceeds the value of the randomisation device. Despite the similarity, there is one fundamental
difference: our equilibrium does not introduce a private randomisation
device because it is not a mixed strategy equilibrium. The apparent
randomness comes from not knowing the opponent's type, i.e., the asymmetric
information. Nevertheless, because a player's actions resemble randomised
stopping when perceived from the rival's perspective, we can exploit
a similar mathematical framework. This observation is key to the reformulation
of the problem in terms of best response optimal stopping problems
in Lemma \ref{lem:J_int}, where the exit time of the opponent is
replaced with a functional of the belief process. 

The best response formulation recasts an equilibrium as a solution to a fixed-point problem whereby the strategy of a player's opponent (driven by the belief process) is also the best response for any value of the player's type. We emphasise that the symmetry of the game in which players are identical is crucial for this approach and cannot be naturally relaxed.

Lastly, one of the mathematical challenges of our model is the construction
of $(Y_{t})$. Just as in our paper, the introduction of an appropriate
belief process is often encountered in papers studying games with
asymmetric information (Grün \cite{Grun2013}, Gensbittel and Grün
\cite{Gensbittel2019}, Ekström et al. \cite{Ekstrom2022Salami},
De Angelis and Ekström \cite{DeAngelis2020ghosts}) and is akin to
the filter process in problems with partial information. Our model
is unique because the belief process $(Y_{t})$ is defined as a solution to an ordinary differential equation (ODE) \eqref{eqn:dynamics_Y} with a
discontinuous right-hand side. Solutions to ODEs with discontinuous
right-hand side are usually non-unique (Filippov \cite[Ch.~2]{Filippov})
and related to differential inclusions. There are several definitions
of solutions to such ODEs. In this paper, we adopt Caratheodory's
approach in which the solution is a continuous function which satisfies
the integral version of the ODE with probability one. Classical results
from the ODE theory require that the discontinuities are located on
smooth surfaces, the condition that is not satisfied for our ODE in
which the discontinuity points are determined by a path of a diffusion
process. Instead, we obtain a maximal Caratheodory solution as a monotone
limit of upper approximations of the right-hand side. 
}

\subsection{Literature review}

Our paper extends the literature on stochastic stopping games under
asymmetric information. The asymmetry of information poses mathematical
challenges, but there has been a flurry of recent contributions,
particularly, in zero-sum games. Games examined in the literature
possess various information structures and sources of uncertainty.
One-sided asymmetry, where one player has a strictly larger information
flow, were studied by Grün \cite{Grun2013}, Lempa and Matom\"aki
\cite{Lempa2013}, and De Angelis et al. \cite{DeAngelis2020}. Gensbittel
and Grün \cite{Gensbittel2019} examine a zero-sum game in which
each player can only observe a private continuous time finite-state
Markov chain while the payoff is a function of both players' processes.
A recent paper De Angelis et al. \cite{DAMP2021} shows the existence
of a Nash equilibrium (saddle point) for general payoff processes
in the framework with asymmetric and partial information.

In non-zero sum games, players' payoffs do not have to sum up to zero,
which results in a much richer set of equilibria even in the case
of full information. PDE results are often in the form of a verification
theorem for a solution to a system of quasi-variational inequalities
(Bensoussan and Friedman \cite{Bensoussan1977}). The existence of
an appropriate solution of this system is studied in Nagai \cite{Nagai1987}
for symmetric Markov processes and continued in Cattaiaux and Lepeltier
\cite{Cattaiaux1990} for Ray-Markov processes. Superharmonic characterisation
of players' value functions for strong Markov processes is provided
in Attard \cite{Attard2018}. Hamadene and Zhang \cite{Hamadene2010}
and Hamadene and Hassani \cite{Hamadene2014} use iterative methods
to construct equilibria in games with two or more players which, even
in a Markovian setting, are not in the form of hitting times. Sub-game
perfect equilibria are examined in Steg \cite{Steg2015} and Riedel
and Steg \cite{Riedel2017}.

In economics literature, the framework of non-zero sum games has been
applied to a game of exit from a declining industry. Murto \cite{Murto2004}
investigates an exit game with a geometric Brownian motion as the
profit flow and characterises Markov perfect equilibria. Steg \cite{Steg2015}
studies the subgame perfection concept in a class of stochastic exit
games and finds a mixed strategy equilibrium analogous to the one
in the deterministic war of attrition. Georgiadis et al. \cite{Georgiadis2019}
investigate an exit game under complete information with a stochastic
profit flow and find that the stochasticity combined with asymmetry
between the players destabilise the mixed strategy equilibrium.

Closest to this paper are studies of non-zero sum games with asymmetric
information. In particular, Fudenberg and Tirole \cite{Fudenberg1986}
examines a duopoly game of exit with a continuous distribution of
private types as in our paper, but it studies a deterministic game
unlike our model. Décamps and Mariotti \cite{Decamps2004}
examines a duopoly game of investment in a common project with Poisson
signals about its quality. Players have incomplete information about their opponent's investment
costs, so the problem is cast as a stopping game under asymmetric information.
In another strand of research, ghost games in which a player does not know if his
opponent exists (De Angelis and Ekström \cite{DeAngelis2020ghosts},
Ekström et al. \cite{Ekstrom2022Salami}) are solved using a verification
approach and result in an equilibrium in randomised stopping times.
Pérez et al. \cite{Perez2021} consider a game where one player can
only stop at random times indicated by a Poisson process. Using a
fixed point theorem, the authors show that the game has a Nash equilibrium
in threshold strategies, i.e., in pure stopping times. The optimality
of this equilibrium is then extended to the class of all stopping times using optimal
stopping theory arguments applied to the best response problems. Conceptually,
our paper has similarities to both lines of research. The equilibrium
strategies we find are akin to randomised stopping times. However,
instead of postulating a PDE for value functions, we use probabilistic
optimal stopping methods for best response problems to prove that
a postulated pair of stopping strategies is a perfect Bayesian equilibrium.

\subsection{Summary of results}\label{subsec:Summary-of-results}

\edt{
Our model considers an economy in which the underlying economic factors
are described by a one-dimensional diffusion 
\begin{equation*}
dX_{t}=\mu(X_{t})dt+\volatility(X_{t})dW_{t},\quad t\ge0,
\end{equation*}
where $(W_{t})_{t\ge0}$ is a Brownian motion. There are two players,
each of whom has a private type $\theta_{i}$, $i=1,2$, that describes
their exit value. The distribution of both private types, denoted
by $F$, is identical and known to both players, but the player's
own exits value is private and unknown to his opponent. Players decide
when to exit the market by choosing the stopping times $\tau_{1}(\theta_{1})$
and $\tau_{2}(\theta_{2})$ that depend on their types. The player
who remains in the market becomes a monopolist and never exits. The
expected payoff to Player $i$ is then given by 
\[
\int\E_{x}\bigg[\int_{0}^{\tau_{i}\wedge\tau_{j}(\theta_{j})}e^{-rt}D(X_{t})dt+1_{\tau_{i}\le\tau_{j}(\theta_{j})}e^{-r\tau_{i}}\theta_{i}+1_{\tau_{i}>\tau_{j}(\theta_{j})}\int_{\tau_{j}(\theta_{j})}^{\infty}e^{-rt}M(X_{t})dt\bigg]dF(\theta_{j}),
\]
where $x$ is the initial value of the process $(X_{t})$, $D$ is
the duopoly profit flow and $M$ is the monopoly profit flow.

The model is completely symmetric, and this symmetry will be exploited
in construction of a symmetric equilibrium. We first note that in
equilibrium a player of type $\theta'$ would want to exit earlier
than a player of type $\theta''$ if $\theta'>\theta''$ because a
player would exit earlier if his outside option is more attractive.
Therefore, it is natural that the player's stopping time $\tau_{i}(\theta)$
be monotone in the type $\theta$. Based on this monotonicity, we
can deduce the existence of a stochastic process $(Y_{t})$ that has
the interpretation of the highest value of the remaining type for
both players. In turn, we hypothesise 
\begin{equation}
\tau_{i}(\theta_{i})=\inf\{t\ge0:\theta_{i}>Y_{t}\}.\label{eqn:SUMM_tau}
\end{equation}
We show that a symmetric equilibrium ensues when the process $(Y_{t})$
solves the differential equation 
\[
dY_{t}=-\frac{F(Y_{t})}{F'(Y_{t})}\ \frac{rY_{t}-D(X_{t})}{m(X_{t})-Y_{t}}\ind{X_{t}\le\alpha(Y_{t})},
\]
where $\alpha(\theta)$ is the optimal exit threshold for a player
with exit value $\theta$ whose opponent is committed to never exit
the market while $m$ is the expected total discounted monopoly profit:
$m(x)=\E_{x}\int_{0}^{\infty}e^{-rt}M(X_{t})dt$. The results are
stated as Theorems \ref{thm:Nash} and \ref{thm:PBE}. It turns out
that this equilibrium is unique (Theorem \ref{thm:unique}) in the
class of symmetric equilibria of the form \eqref{eqn:SUMM_tau} for
a large class of processes $(Y_{t})$.
}

\subsection{Outline of the paper and terminology}

The paper is organised as follows. In Section \ref{sec:model} we
introduce the framework for the exit game and a sufficient condition
for a Nash equilibrium in terms of best response optimal stopping
problems. An optimal stopping problem of exit from a duopoly is briefly
discussed in Section \ref{sec:single_player}. Its solution plays
a pivotal role in the construction of an equilibrium of the exit game
in Section \ref{sec:equil}. \edt{Section \ref{subsec:heuristic} provides a heuristic
derivation of the equilibrium in \eqref{eqn:lambda}. The uniqueness of the equilibrium is demonstrated in Section \ref{sec:uniqeness}. Extreme cases when the underlying dynamics are deterministic
or the distribution of exit value collapses to a point are covered
in Section \ref{sec:Discussions}. Appendix develops asymptotic
bounds for exit times of a diffusion and contains detailed calculations
for an example discussed in the text. To get
a basic understanding of the motivation, definition, and properties
of the equilibrium without having to read technical details of the
mathematical framework, we recommend reading Sections \ref{subsec:Summary-of-results},
\ref{subsec:The-strategy-profile}, \ref{subsec:heuristic}, \ref{subsec:Nash},
\ref{subsec:PBE}, and \ref{sec:Discussions}. }

As a matter of convention throughout the paper,
we write increasing/decreasing for non-strict monotonicity, and strictly
increasing/decreasing for strict monotonicity.

\section{Model}

\label{sec:model}

Consider a complete probability space $(\Omega,\cF,\P)$ with filtration
$(\cF_{t})_{t\ge0}$ satisfying the usual conditions. The underlying
state of the system is described by a one-dimensional diffusion 
\begin{equation}
dX_{t}=\mu(X_{t})dt+\volatility(X_{t})dW_{t},\quad t\ge0,\label{eqn:X}
\end{equation}
where $(W_{t})_{t\ge0}$ is an $(\cF_{t})$-Brownian motion. We assume
that $\mu(\cdot)$ and $\volatility(\cdot)$ are Lipschitz continuous
so that $(X_{t})_{t\ge0}$ is a unique strong solution which is a
strong Markov process. We further assume that $\volatility(\cdot)>0$.
We denote by $\cI=(x_{L},x_{U})$ the (potentially infinite) interval
in which $X_{t}$ takes values and assume that its boundaries are
not attainable.

The model includes two agents (players), each having a private random
variable $\theta_{i}$, $i=1,2$, describing their exit value (type).
At the outset, the players do not know the exit value of either player
although the probability distribution of the types is public knowledge.
At time $0$, each player learns his own private type which remains
unknown to his opponent throughout the game. At any time $t>0$, a
player is aware of his own type, but he only holds a belief (a probability
distribution) about his opponent's type. This flow of information
simulates a game of exit among firms that learn of their own exit
value upon entering an industry or a new market while remaining uncertain
about their opponent's exit value until the end of the game. The exit
value represents the reward from exit that may include the salvage
value and the value of an alternative business venture. Once a player
exits, the opponent player is assumed to hold perpetual monopoly of
the industry/market.

Supporting those random variables are two complete probability spaces
$(\Omega^{\theta_{i}},\cF^{\theta_{i}},\P^{\theta_{i}})_{i=1,2}$.
Put 
\begin{equation}
(\tl\Omega,\tl\cF,\tl\P)=(\Omega,\cF,\P)\otimes(\Omega^{\theta_{1}},\cF^{\theta_{1}},\P^{\theta_{1}})\otimes(\Omega^{\theta_{2}},\cF^{\theta_{2}},\P^{\theta_{2}}),\label{eqn:tl_omega}
\end{equation}
and denote by $(\tl\cF_{t})_{t\ge0}$ the embedding of the filtration
$(\cF_{t})_{t\ge0}$ onto $(\tl\Omega,\tl\cF)$, i.e., $\tl\cF_{t}=\sigma(\{A\times\Omega^{\theta_{1}}\times\Omega^{\theta_{2}}:\ A\in\cF_{t}\})$.
With an abuse of notation, we will write $(X_{t})_{t\ge0}$, $\theta_{1}$
and $\theta_{2}$ for an embedding of the process $(X_{t})_{t\ge0}$
and the random variables $\theta_{1},\theta_{2}$ into $(\tl\Omega,\tl\cF,\tl\P)$.
We will denote by $\E$ the expectation with respect to $\P$ and
by $\tl\E$ the expectation with respect to $\tl \P$. With the lower
index by $\P$,$\E$ and $\tl \P$, $\tl\E$ we indicate the initial
value $X_{0}$.

The information flow of player $i$ is modelled by filtration $(\cF_{t}^{i})_{t\ge0}:=\tl\cF_{t}\vee\sigma(\theta_{i})$,
i.e., $\cF_{t}^{i}$ is the smallest $\sigma$-algebra containing
$\tl\cF_{t}$ and with respect to which $\theta_{i}$ is measurable.
His action is given by a $(\cF_{t}^{i})$-stopping time $\tau_{i}$
which we denote by $\tau_{i}\in\cT(\cF_{t}^{i})$. The payoff of Player
$1$ is 
\[
J_{1}(x,\tau_{1},\tau_{2})=\tl\E_{x}\bigg[\int_{0}^{\tau_{1}\wedge\tau_{2}}e^{-rt}D(X_{t})dt+1_{\tau_{1}\le\tau_{2}}e^{-r\tau_{1}}\theta_{1}+1_{\tau_{1}>\tau_{2}}e^{-r\tau_{2}}m(X_{\tau_{2}})\bigg],
\]
and, analogously, the payoff of Player 2 is 
\[
J_{2}(x,\tau_{1},\tau_{2})=\tl\E_{x}\bigg[\int_{0}^{\tau_{1}\wedge\tau_{2}}e^{-rt}D(X_{t})dt+1_{\tau_{2}\le\tau_{1}}e^{-r\tau_{2}}\theta_{2}+1_{\tau_{2}>\tau_{1}}e^{-r\tau_{1}}m(X_{\tau_{1}})\bigg],
\]
where 
\[
m(x)=\E_{x}\bigg[\int_{0}^{\infty}e^{-rs}M(X_{s})ds\bigg],\qquad x\in\cI.
\]
Function $D(x)$ represents the profit flow to a player in a duopoly
while $M(x)$ is the profit flow to the remaining player in the monopoly,
hence $m(x)$ is the cumulative discounted profit earned by a monopolist
given $X_{0}=x$.

\begin{remark} When $\tau_{1}=\tau_{2}$ both players exit the market
and earn their exit value. This has a clear explanation from a managerial
perspective as players make the exit decision independently. From
a mathematical perspective, it will never happen as in an equilibrium
that we study in this paper the probability of a double exit is zero.
\end{remark}

We make the following assumptions. \begin{assumption}\label{ass:theta_supp}
Random variables $\theta_{i}$, $i=1,2$, have the support $[\theta_{L},\theta_{U}]$.
\end{assumption}

\begin{assumption}\label{ass:functional} Functions $D,M:\cI\to[0,\infty)$
are continuous, increasing and bounded, and $M>D$. Furthermore, $\inf_{x\in\cI}m(x)>\theta_{U}$,
and the interest rate $r>0$. \end{assumption}

\begin{assumption}\label{ass:coeff} Coefficients $\mu$ and $b$
are Lipschitz continuous and $b>0$. \end{assumption}

Assumption \ref{ass:coeff} means that $b$ is uniformly non-degenerate
on any compact subset of $\cI$ and $(X_{t})_{t\ge0}$ is a weak Feller
process, i.e., its semigroup maps continuous bounded functions into
continuous functions. Following from this observation, thanks to Assumption
\ref{ass:functional}, function $m$ defined above and function $d$
given as 
\[
d(x)=\E_{x}\bigg[\int_{0}^{\infty}e^{-rs}D(X_{s})ds\bigg],\qquad x\in\cI,
\]
are continuous and bounded.

\begin{remark} In our model, $m(x)$ is the cumulative future profit
flow for a player who becomes a monopolist when the underlying process
is in state $x$. One might argue that the monopolist should be allowed
to exit the market, i.e., in the firm's payoffs, $m$ should be replaced
by 
\[
\hat{m}(x;\theta)=\sup_{\tau}\E_{x}\bigg[\int_{0}^{\tau}e^{-rs}M(X_{s})ds+e^{-r\tau}\theta\bigg],
\]
where $\theta\in[\theta_{L},\theta_{U}]$ is the exit value of the
monopolist. Clearly, $\hat{m}(x;\theta)\ge m(x)$. By Assumption \ref{ass:functional},
we have $m(x)>\theta_{U}$, so $\hat{m}(x;\theta)>\theta$ for every
possible exit value $\theta$. This means that the optimal stopping
time in $\hat{m}(x;\theta)$ is $\tau=\infty$, so $\hat{m}(x;\theta)=m(x)$.
\edt{This simplification has been accounted for in the definition of player payoffs $J_1$ and $J_2$. The case when $\inf_{x \in \cI} m(x) < \theta_U$ is significantly more difficult and beyond the scope of this paper. It will certainly lead to a different behaviour of players as it is possible that both players exit the market at a finite time when it is suboptimal to continue as a monopolist even with the lowest exit value $\theta_L$.}
\end{remark}

We now introduce the notion of a Nash equilibrium in the context of
our game.

\begin{definition}\label{def:Nash} A strategy profile (a pair of
strategies) $(\tau_{1}^{*},\tau_{2}^{*})\in\cT(\cF_{t}^{1})\times\cT(\cF_{t}^{2})$
is called a \emph{Nash equilibrium} for $x$ if for any other pair
of strategies $(\tau_{1},\tau_{2})\in\cT(\cF_{t}^{1})\times\cT(\cF_{t}^{2})$
we have 
\begin{align*}
 & J_{2}(x,\tau_{1}^{*},\tau_{2}^{*})\ge J_{2}(x,\tau_{1}^{*},\tau_{2}),\quad\tl\P_{x}-a.s.,\\
 & J_{1}(x,\tau_{1}^{*},\tau_{2}^{*})\ge J_{1}(x,\tau_{1},\tau_{2}^{*}),\quad\tl\P_{x}-a.s.
\end{align*}
\end{definition}

To construct a Nash equilibrium, we need to understand the structure
of $(\cF_{t}^{i})$-stopping times. The reader is referred to \cite[Proposition 3.3]{esmaeeli2018}
for related results in a more general setting and with a different
method of proof.

\begin{proposition}\label{prop:structure} Let Assumption \ref{ass:theta_supp}
hold. If $\hat{\tau}:(\Omega,\cF)\otimes([\theta_{L},\theta_{U}],\cB([\theta_{L},\theta_{U}]))\to([0,\infty),\cB([0,\infty)))$
is measurable and the mapping $\hat{\tau}(\cdot,\theta):\Omega\to[0,\infty)$ is an $(\cF_{t})$-stopping
time for each $\theta\in[\theta_{L},\theta_{U}]$, then $\tau=\hat{\tau}(\cdot,\theta_{i})$ is an $(\cF_{t}^{i})$-stopping
time. Conversely, for every $(\cF_{t}^{i})$-stopping time $\tau$
there is a mapping $\hat{\tau}$ satisfying the above conditions such
that $\tau=\hat{\tau}(\cdot,\theta_{i})$, $\P$-a.s.
\end{proposition}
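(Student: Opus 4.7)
I will prove the two directions separately. For the \emph{forward direction}, the goal is $\{\tau\le t\}\in\cF^i_t$ for each $t\ge0$, where $\tau(\tl\omega)=\hat{\tau}(\omega,\theta_i(\tl\omega))$. Setting $E_t=\{(\omega,\theta):\hat{\tau}(\omega,\theta)\le t\}$, joint measurability gives $E_t\in\cF\otimes\cB([\theta_L,\theta_U])$, while the stopping-time hypothesis yields $E_t^\theta\in\cF_t$ for every $\theta$. The map $\Phi:\tl\omega\mapsto(\omega,\theta_i(\tl\omega))$ satisfies $\cF^i_t=\Phi^{-1}(\cF_t\otimes\cB)$, so if $E_t$ were itself in $\cF_t\otimes\cB$ the claim would follow at once.

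The crux is that sectional $\cF_t$-measurability does not automatically upgrade to joint $\cF_t\otimes\cB$-measurability, so I must produce a substitute that agrees with $E_t$ up to a null set. My plan is to choose a jointly $(\cF_t\otimes\cB)$-measurable version $g_t(\omega,\theta)$ of the conditional expectation $\E[\ind{E_t^\theta}\mid\cF_t](\omega)$, whose existence follows from the Polish structure of $[\theta_L,\theta_U]$ and standard regularity results for regular conditional probabilities. Because $E_t^\theta\in\cF_t$, one has $g_t(\cdot,\theta)=\ind{E_t^\theta}$ $\P$-a.s.\ for every $\theta$; Fubini then gives $g_t\in\{0,1\}$ on a set of full $\P\otimes F$-measure, where $F$ is the law of $\theta_i$. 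Taking $E_t':=\{g_t=1\}\in\cF_t\otimes\cB$, one has $E_t=E_t'$ up to a $\P\otimes F$-null set, and because the push-forward of $\tl\P$ under $\Phi$ coincides with $\P\otimes F$, this pulls back to $\{\tau\le t\}=\Phi^{-1}(E_t')$ up to a $\tl\P$-null set; completeness of $(\cF^i_t)$ then places $\{\tau\le t\}$ in $\cF^i_t$.

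For the \emph{backward direction}, given an $(\cF^i_t)$-stopping time $\tau$, the identity $\cF^i_t=\Phi^{-1}(\cF_t\otimes\cB)$ lets me represent, for each rational $t$, the indicator $\ind{\{\tau\le t\}}(\tl\omega)$ as $g_t(\omega,\theta_i(\tl\omega))$ for some $\cF_t\otimes\cB$-measurable $g_t:\Omega\times[\theta_L,\theta_U]\to\{0,1\}$. After modifying on null sets to arrange $g_s\le g_t$ pointwise for rational $s\le t$, I would define
\[
\hat{\tau}(\omega,\theta)=\inf\{t\in\Q\cap[0,\infty):g_t(\omega,\theta)=1\},
\]
which is jointly measurable as an infimum of countably many measurable functions. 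For fixed $\theta$, the identity $\{\hat{\tau}(\cdot,\theta)\le s\}=\bigcap_{n\ge 1}\bigcup_{t\in\Q\cap[0,s+1/n]}\{g_t(\cdot,\theta)=1\}$ places $\{\hat{\tau}(\cdot,\theta)\le s\}$ in $\cF_{s+}=\cF_s$ by right-continuity, so $\hat{\tau}(\cdot,\theta)$ is an $(\cF_t)$-stopping time, and $\hat{\tau}(\omega,\theta_i)=\tau$ holds $\tl\P$-a.s.\ by construction.

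The hardest step will be the forward direction, specifically producing the jointly measurable version $g_t$ of the conditional expectation and the null-set bookkeeping needed to transfer measurability from $E_t'\in\cF_t\otimes\cB$ back to $\{\tau\le t\}\in\cF^i_t$. A simple counterexample (e.g.\ the diagonal of $[0,1]^2$ relative to the trivial augmented $\sigma$-algebra on the first coordinate) shows this step cannot be handled by a purely set-theoretic argument and genuinely depends on the topological and probabilistic regularity of the set-up; the remainder, including the backward direction, is routine.
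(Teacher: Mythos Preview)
Your treatment of the two directions is essentially inverted relative to the paper. The paper disposes of the forward direction as ``immediate'' in one line and devotes its effort to the converse; you correctly observe that the forward direction hides a subtlety---sectional $\cF_t$-measurability of $E_t=\{\hat\tau\le t\}$ together with $E_t\in\cF\otimes\cB$ does not force $E_t\in\cF_t\otimes\cB$, as your diagonal example illustrates. Your fix via a jointly measurable conditional-expectation version is a sound strategy, but the final step (absorbing a $\tl\P$-null discrepancy into $\cF^i_t$) relies on completeness of $(\cF^i_t)$ with respect to $\tl\P$, which is not established in the paper: only $(\cF_t)$ is assumed to satisfy the usual conditions, and $\cF^i_t=\tl\cF_t\vee\sigma(\theta_i)$ is not automatically $\tl\P$-complete. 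This is a gap you would need to close.

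For the converse, the paper takes a different route from your rational-infimum construction. It first uses the Monotone Class Theorem to show that every function measurable with respect to the join of (the embedding of) $\cF$ and $\sigma(\theta_i)$ factors as $\hat\tau(\omega_0,\theta_i(\omega_i))$ for some $\cF\otimes\cB$-measurable $\hat\tau$. Then, for each fixed $t$, it applies the same factorisation at the level of $\cF^i_t$ to $\ind{\{\tau\le t\}}$, obtaining a $\cF_t\otimes\cB$-measurable $\hat f$ whose $z$-sections all lie in $\cF_t$, and invokes the full-support assumption on $\theta_i$ to identify $\{\hat\tau(\cdot,z)\le t\}$ with these sections for every $z\in[\theta_L,\theta_U]$. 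This is more direct than your approach and avoids the null-set monotonisation and the right-continuity manoeuvre; your construction is also valid if carried out carefully, and has the merit of making explicit where right-continuity of the filtration enters.
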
 
\begin{proof}
The fact that $\tau=\hat{\tau}(\cdot,\theta_{i})$ is an $(\cF_{t}^{i})$-stopping
time is immediate.

The proof of the converse is more involved. Fix $i\in\{1,2\}$ and
let $\cG=\hat{\cF}\vee\sigma(\theta_{i})$, where $\hat{\cF}=\sigma\{A\times\Omega^{\theta_{1}}\times\Omega^{\theta_{2}}:\ A\in\cF\}$.
Recalling the definition of $\tl\Omega$ in \eqref{eqn:tl_omega},
we write its elements $\omega$ as $(\omega_{0},\omega_{1},\omega_{2})\in\Omega\times\Omega^{\theta_{1}}\times\Omega^{\theta_{2}}$.
Consider first $\tau(\omega_{0},\omega_{1},\omega_{2})=\tau'(\omega_{0})\ind{A}(\omega_{i})$
for $i \in \{1,2\}$, $A\in\sigma(\theta_{i})$ and $\tau'$ an $(\cF_{t})$-stopping
time. By \cite[p. 76]{Halmos} there is $B\in\cB([\theta_{L},\theta_{U}])$
such that $A=\theta_{i}^{-1}(B)$. Hence $\tau(\omega_{0},\omega_{1},\omega_{2})=\hat{\tau}(\omega_{0},\theta_{i}(\omega_{i}))$
for $\hat{\tau}(\omega_{0},z)=\tau'(\omega_{0})\ind{B}(z)$. This
representation extends to any $\cG$-measurable non-negative function
using Monotone Class Theorem, i.e., any such function has a representation
as $\hat{\tau}(\omega_{0},\theta_{i}(\omega_{i}))$ for a measurable
$\hat{\tau}$ as in the statement of the theorem.

It remains to show that if $\tau$ is $(\cF_{t}^{i})$-stopping time,
then $\hat{\tau}(\cdot,z)$ is an $(\cF_{t})$-stopping time for any
$z\in[\theta_{L},\theta_{U}]$. Fix $t\ge0$ and let $A=\{\tau\le t\}\in\cF_{t}^{i}$.
By analogous arguments as above applied to $\cG=\tilde{\cF}_{t}\vee\sigma(\theta_{i})$,
there is a $\cF_{t}\otimes\cB([\theta_{L},\theta_{U}])$-measurable
function $\hat{f}$ such that $\ind{A}(\omega_{0},\omega_{1},\omega_{2})=\hat{f}(\omega_{0},\theta_{i}(\omega_{i}))$.
By \cite[Prop.~3.3.2]{Bogachev}, the set $A_{z}:=\{\omega_{0}:\ \hat{f}(\omega_{0},z)=1\}$
is $\cF_{t}$-measurable for $z\in[\theta_{L},\theta_{U}]$. For any
$\omega_{i}\in\Omega^{\theta_{i}}$, we have $\{\omega_{0}:\ \hat{\tau}(\omega_{0},\theta_{i}(\omega_{i}))\le t\}=A_{\theta_{i}(\omega_{i})}\in\cF_{t}$,
so $\{\hat{\tau}(\cdot,z)\le t\}\in\cF_{t}$ for any $z$ belonging
to the support $[\theta_{L},\theta_{U}]$ of $\theta_{i}$ (c.f. Assumption
\ref{ass:theta_supp}). As $t\ge0$ is arbitrary, the above arguments
show that $\hat{\tau}(\cdot,z)$ is an $(\cF_{t})$-stopping time
for any $z\in[\theta_{L},\theta_{U}]$. 
\end{proof}
It will be convenient to define a payoff functional for a deterministic
exit value: for $\sigma\in\cT(\cF_{t})$, $\gamma\in[\theta_{L},\theta_{U}]$
and a random time $\tau$ on $(\tl\Omega,\tl\cF,\tl\P)$, 
\begin{equation}
J(x,\sigma,\tau;\gamma)=\tl\E_{x}\bigg[\int_{0}^{\sigma\wedge\tau}e^{-rt}D(X_{t})dt+\ind{\sigma\le\tau}e^{-r\sigma}\gamma+\ind{\sigma>\tau}e^{-r\tau}m(X_{\tau})\bigg].\label{eqn:J}
\end{equation}
\edt{The function $J(x,\sigma,\tau;\gamma)$ has the
meaning of the expected payoff to player $i$ whose type is $\gamma$ when
player $i$'s strategy is to stop at $\sigma$ and player $j$'s strategy
is to stop at $\tau$.}

Fundamental to our construction of the Nash equilibrium is the following
sufficient condition enabled by the structure of $(\cF_{t}^{i})$-stopping
times established in Proposition \ref{prop:structure}.

\begin{corollary}\label{corr:equil} Let Assumption \ref{ass:theta_supp}
hold and assume that $\hat{\tau}_{1},\hat{\tau}_{2}:\Omega\times[\theta_{L},\theta_{U}]\to[0,\infty)$
are as in Proposition \ref{prop:structure}. Define $\tau_{1}=\hat{\tau}_{1}(\cdot,\theta_{1})$
and $\tau_{2}=\hat{\tau}_{2}(\cdot,\theta_{2})$. If for each $\theta\in[\theta_{L},\theta_{U}]$
we have 
\[
\hat{\tau}_{1}(\cdot,\theta)\in\argmax_{\sigma\in\cT(\cF_{t})}J(x,\sigma,\tau_{2};\theta)\qquad\text{and}\qquad\hat{\tau}_{2}(\cdot,\theta)\in\argmax_{\sigma\in\cT(\cF_{t})}J(x,\sigma,\tau_{1};\theta)
\]
then $(\tau_{1},\tau_{2})$ is a Nash equilibrium. \end{corollary}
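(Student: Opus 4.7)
The plan is to reduce the Nash equilibrium condition, stated in terms of stopping times in the enlarged filtrations $(\cF_t^i)$ that may exploit private type information, to a pointwise optimality statement in $\theta$ for $(\cF_t)$-stopping times, where the hypothesis of the corollary directly applies. By symmetry of the argument, it suffices to show that $\tau_1^* := \hat\tau_1(\cdot, \theta_1)$ is a best response to $\tau_2^* := \hat\tau_2(\cdot, \theta_2)$ among all $\tau_1' \in \cT(\cF_t^1)$; the case of Player $2$ is identical after swapping the roles.

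Take any such $\tau_1'$. By Proposition \ref{prop:structure}, there is a jointly measurable $\hat\tau: \Omega \times [\theta_L, \theta_U] \to [0,\infty)$ with $\tau_1' = \hat\tau(\cdot, \theta_1)$ and $\hat\tau(\cdot, \theta) \in \cT(\cF_t)$ for each $\theta$. By the product structure of $\tl\Omega$ in \eqref{eqn:tl_omega}, $\theta_1$ is independent of $(X_t)$ and of $\theta_2$, and hence also of $\tau_2^*$. An application of Fubini's theorem, integrating out $\theta_1$ first, transforms each of the three summands in the $\tl\E_x$-expectation defining $J_1(x, \tau_1', \tau_2^*)$ into the corresponding summand of $J(x, \hat\tau(\cdot, \theta), \tau_2^*; \theta)$ after substituting $\theta_1 = \theta$. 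This yields
\begin{equation*}
J_1(x, \tau_1', \tau_2^*) = \int_{\theta_L}^{\theta_U} J(x, \hat\tau(\cdot, \theta), \tau_2^*; \theta) \, dF(\theta),
\end{equation*}
and the analogous identity holds for $J_1(x, \tau_1^*, \tau_2^*)$ with $\hat\tau$ replaced by $\hat\tau_1$. By the standing hypothesis, the integrands satisfy $J(x, \hat\tau(\cdot, \theta), \tau_2^*; \theta) \le J(x, \hat\tau_1(\cdot, \theta), \tau_2^*; \theta)$ pointwise in $\theta$, so integrating against $F$ delivers $J_1(x, \tau_1', \tau_2^*) \le J_1(x, \tau_1^*, \tau_2^*)$, which is the Nash inequality for Player $1$.

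The only non-trivial step is the Fubini-based disintegration. Its justification requires joint $(\omega,\theta)$-measurability of the three integrands $\int_0^{\hat\tau(\cdot,\theta)\wedge\tau_2^*} e^{-rt}D(X_t)\,dt$, $\ind{\hat\tau(\cdot,\theta) \le \tau_2^*}e^{-r\hat\tau(\cdot,\theta)}\theta$, and $\ind{\hat\tau(\cdot,\theta) > \tau_2^*}e^{-r\tau_2^*}m(X_{\tau_2^*})$, which follows from the joint measurability of $\hat\tau$ granted by Proposition \ref{prop:structure} together with path continuity of $(X_t)$ and continuity of $m$ and $D$. Integrability of each term is immediate from the boundedness of $D$, $M$, and $\theta_U$ built into Assumption \ref{ass:functional}, so Fubini applies without difficulty and the rest of the argument is book-keeping.
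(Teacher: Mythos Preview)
Your proof is correct and follows essentially the same approach as the paper's: decompose an arbitrary $(\cF_t^1)$-stopping time via Proposition~\ref{prop:structure}, disintegrate $J_1$ over $\theta_1$ using independence, apply the pointwise optimality hypothesis, and integrate back. The paper phrases the disintegration as the tower property of conditional expectation rather than Fubini, but this is purely cosmetic; your additional remarks on joint measurability and integrability make explicit what the paper leaves implicit.
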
 
\begin{proof}
Take any $(\cF_{t}^{1})$-stopping time $\tau'_{1}$. By Proposition
\ref{prop:structure}, it can be written as $\hat{\tau}'_{1}(\cdot,\theta_{1})$
for a $\cF\otimes\cB([\theta_{L},\theta_{U}])$-measurable function
$\hat{\tau}'_{1}$ such that $\hat{\tau}'_{1}(\cdot,z)$ is an $(\cF_{t})$-stopping
time for each $z\in[\theta_{L},\theta_{U}]$. By the tower property
of conditional expectation and the independence of $\tau_{2}=\hat{\tau}_{2}(\cdot,\theta_{2})$
from $\theta_{1}$ we have 
\begin{align*}
J_{1}(x,\tau'_{1},\tau_{2}) & =\tl\E_{x}\bigg[\tl\E_{x}\bigg[\int_{0}^{\tau'_{1}\wedge\tau_{2}}e^{-rt}D(X_{t})dt+\ind{\tau'_{1}\le\tau_{2}}e^{-r\tau'_{1}}\theta_{1}+\ind{\tau'_{1}>\tau_{2}}e^{-r\tau_{2}}m(X_{\tau_{2}})\bigg|\sigma(\theta_{1})\bigg]\bigg]\\
 & =\int_{\gamma\in[\theta_{L},\theta_{U}]}J(x,\hat{\tau}'_{1}(\cdot,\gamma),\tau_{2};\gamma)dF_{\theta_{1}}(\gamma)\le\int_{\gamma\in[\theta_{L},\theta_{U}]}J(x,\hat{\tau}_{1}(\cdot,\gamma),\tau_{2};\gamma)dF_{\theta_{1}}(\gamma)\\
 & =\tl\E_{x}\bigg[\tl\E_{x}\bigg[\int_{0}^{\tau_{1}\wedge\tau_{2}}e^{-rt}D(X_{t})dt+\ind{\tau_{1}\le\tau_{2}}e^{-r\tau_{1}}\theta_{1}+\ind{\tau_{1}>\tau_{2}}e^{-r\tau_{2}}m(X_{\tau_{2}})\bigg|\sigma(\theta_{1})\bigg]\bigg]\\
 & =J_{1}(x,\tau_{1},\tau_{2}),
\end{align*}
where $F_{\theta_{1}}$ is the cumulative distribution function of
$\theta_{1}$ and for the inequality we used that $\hat{\tau}_{1}(\cdot,\gamma)$
maximises $J(x,\cdot,\tau_{2};\gamma)$. We repeat the same arguments
for $J_{2}$. 
\end{proof}


\section{Single player problem}

\label{sec:single_player}

In this section, we assume that Player 2 never exits. Player 1's decision
problem reduces to an optimal stopping problem parametrised by $\theta$
with the payoff functional 
\begin{equation}
J(x,\tau;\theta)=\E_{x}\Big[\int_{0}^{\tau}e^{-rs}D(X_{s})ds+e^{-r\tau}\theta\Big],\label{eq:U-theta}
\end{equation}
where $\tau$ is an $(\cF_{t})$-stopping time. The solution of this
problem will be used in the construction of the equilibrium for the
exit game.

Denote the value function corresponding to \eqref{eq:U-theta} by
\begin{equation}
u(x;\theta)=\sup_{\tau\in\cT(\cF_{t})}J(x,\tau;\theta).\label{eqn:value_u}
\end{equation}
\edt{Given that the profit flow $D$ is increasing, the payoff $J(x,\tau;\theta)$
is increasing in $x$. Therefore, we conclude that the value function
$u(x;\theta)$ is increasing in $x$, so if $x$ is in the stopping
set (i.e., it is optimal to stop when $X_{t}=x$ ), then $(x_{L},x]$
is in the stopping set. This implies that the optimal strategy should
be given by the first entry time $\tau=\inf\{t\ge0:X_{t}\le\alpha\}$
for a threshold $\alpha$ depending on $\theta$. We will impose assumptions
sufficient to deduce this result and a characterisation of the threshold
$\alpha$ from \cite[Thm.~3]{Alvarez2001}.} 

Let $\phi(\cdot)$ denote the decreasing fundamental solution to the
ordinary differential equation $(\cL_{X}-r)\phi(x)=0$, where $\cL_{X}$
is the generator of $(X_{t})_{t\ge0}$ given by 
\begin{equation}
\cL_{X}:=\frac{1}{2}\sigma^{2}(x)\frac{\partial^{2}}{\partial x^{2}}+\mu(x)\frac{\partial}{\partial x}.\label{eq:charac-op}
\end{equation}

\begin{assumption} \label{assump:Li}$\ $

(i) For each $\theta\in[\theta_{L},\theta_{U}]$, there exists a critical
value $c(\theta)\in\cI\cup\{x_{U}\}$ such that $D(x)\le r\theta$
for $x<c(\theta)$ and $D(x)>r\theta$ for $x>c(\theta)$.

(ii) For each $\theta\in[\theta_{L},\theta_{U}]$, the function 
\begin{equation}
a_{\theta}(x):=\frac{\theta-d(x)}{\phi(x)}\label{eq:a-theta}
\end{equation}
attains a unique global maximum at $\alpha(\theta)\in\cI$, is differentiable
at $\alpha(\theta)$ and is increasing for $x<\alpha(\theta)$.

(iii) Function $\alpha:[\theta_{L},\theta_{U}]\to\cI$ defined in
(ii) is strictly increasing. \end{assumption}

\begin{remark}\label{rem:a'} A sufficient condition for Assumption
\ref{assump:Li}(ii)-(iii) is that, for each $\theta\in[\theta_{L},\theta_{U}]$,
the function $x\mapsto a_{\theta}(x)$ is continuously differentiable
and there is $\alpha(\theta)\in\cI$ such that $a_{\theta}'(x)>0$
for $x<\alpha(\theta)$ and $a_{\theta}'(x)<0$ for $x>\alpha(\theta)$.
These conditions immediately give (ii). For (iii), fix $\theta$ and
$x^{*}=\alpha(\theta)$. Take any $\theta'>\theta$ and notice that
\[
a_{\theta'}'(x^{*})=a_{\theta}'(x^{*})+\frac{(\theta'-\theta)\phi'(x^{*})}{\phi^{2}(x)}<a_{\theta}'(x^{*})=0,
\]
because $\phi$ is a strictly decreasing function, so $\phi'<0$.
This implies that $x^{*}<\alpha(\theta')$, hence the required monotonicity.
\end{remark}

\begin{lemma} \label{lemm:conti-alpha} Under Assumptions \ref{ass:functional},
\ref{ass:coeff} and \ref{assump:Li}, we have: 
\begin{itemize}
\item[(i)] for each $\theta\in[\theta_{L},\theta_{U}]$, the optimal policy
is to exit at the stopping time 
\begin{equation}
\tau_{\theta}^{*}=\inf\left\{ t\geq0:\,X_{t}\leq\alpha(\theta)\right\} \:,\label{eq:Tau_i}
\end{equation}
where $\alpha(\theta)$ is defined in Assumption \ref{assump:Li}.
Furthermore, $u(x;\theta)$ is given by 
\begin{equation}
u(x;\theta)=\begin{cases}
a_{\theta}(\alpha(\theta))\phi(x)+d(x) & \text{for}\;x>\alpha(\theta)\\
\theta & \text{for}\;x\le\alpha(\theta)
\end{cases},\label{eq:U*}
\end{equation}
and $u(x;\theta)>\theta$ for all $x>\alpha(\theta)$. 
\item[(ii)] $u(x;\theta)$ is continuous in $\theta$. 
\item[(iii)] Function $\alpha$ is continuous and $\alpha(\theta)\le c(\theta)$.
\end{itemize}
\end{lemma}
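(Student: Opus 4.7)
The plan is to reduce (i) to a standard optimal stopping problem covered by the Alvarez framework, derive (ii) and the bound in (iii) by short estimates, and devote the main effort to establishing continuity of $\alpha$.

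For (i), using the strong Markov property together with boundedness of $d$, I would rewrite
\[ J(x,\tau;\theta) = d(x) + \E_x\bigl[e^{-r\tau}(\theta - d(X_\tau))\bigr] \]
(with the convention $e^{-r\infty}=0$), so that maximising $J$ over $\tau$ is equivalent to solving the discounted diffusion stopping problem $\sup_\tau \E_x[e^{-r\tau}(\theta - d(X_\tau))]$. Under Assumption \ref{assump:Li}(ii), \cite[Thm.~3]{Alvarez2001} identifies the optimiser as $\tau_\theta^*$ of \eqref{eq:Tau_i} with value $\phi(x)\,a_\theta(\alpha(\theta))$ in the continuation region $\{x>\alpha(\theta)\}$ and $\theta - d(x)$ in the stopping region; adding $d(x)$ back yields \eqref{eq:U*}. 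The strict inequality $u(x;\theta)>\theta$ for $x>\alpha(\theta)$ then follows from uniqueness of the maximiser: $a_\theta(\alpha(\theta))>a_\theta(x)$ multiplied by $\phi(x)>0$ rearranges to $a_\theta(\alpha(\theta))\phi(x)+d(x)>\theta$.

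For (ii), since $\theta \mapsto J(x,\tau;\theta)$ is affine in $\theta$ with slope $\E_x[e^{-r\tau}\ind{\tau<\infty}]\in[0,1]$, the standard inequality for suprema of affine families gives
\[ |u(x;\theta_1)-u(x;\theta_2)| \le |\theta_1-\theta_2|, \]
so $u$ is in fact Lipschitz in $\theta$ uniformly in $x$. For the bound $\alpha(\theta)\le c(\theta)$ in (iii), I would invoke the variational inequality associated with the stopping problem: in the stopping region the value equals the intrinsic value $\theta$, so applying It\^o to $e^{-rt}u(X_t;\theta)$ together with the dynamic programming principle forces $(\cL_X-r)\theta + D(x)\le 0$ at each stopping point, i.e.\ $D(x)\le r\theta$, which by Assumption \ref{assump:Li}(i) means $x\le c(\theta)$.

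The delicate step is continuity of $\alpha$. Since $\alpha$ is strictly increasing on $[\theta_L,\theta_U]$ by Assumption \ref{assump:Li}(iii), for any sequence $\theta_n\to\theta$ one has $\alpha(\theta_n)\in[\alpha(\theta_L),\alpha(\theta_U)]\subset\cI$, which traps all subsequential limits away from the unattainable boundaries of $\cI$. For any subsequential limit $\beta$, continuity of $d$ and positivity of $\phi$ yield locally uniform convergence $a_{\theta_n}\to a_\theta$ on $\cI$, so passing to the limit in $a_{\theta_n}(\alpha(\theta_n))\ge a_{\theta_n}(x)$ gives $a_\theta(\beta)\ge a_\theta(x)$ for every $x$; the uniqueness clause of Assumption \ref{assump:Li}(ii) then forces $\beta=\alpha(\theta)$. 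The main obstacle here is that $\alpha(\theta)$ has no closed form and is only characterised variationally, so the limit identification must go through subsequential compactness and exploit uniqueness of the maximiser, with the monotonicity bound above being essential to prevent the argument from degenerating at the boundary of $\cI$.
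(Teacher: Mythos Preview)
Your proof is correct and, for parts (i) and (iii), proceeds along the same lines as the paper: the reduction to Alvarez's framework, the observation that stopping cannot be optimal where $D(x)>r\theta$, and the subsequential-limit argument for continuity of $\alpha$ via uniqueness of the maximiser of $a_\theta$ are all essentially identical to what the paper does (your explicit compactness step using $\alpha(\theta_n)\in[\alpha(\theta_L),\alpha(\theta_U)]\subset\cI$ is a useful clarification the paper leaves implicit).

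The one genuine difference is in (ii). The paper deduces continuity of $u$ in $\theta$ from the explicit formula \eqref{eq:U*} together with the already-established continuity of $\alpha$, so its argument for (ii) depends on (iii). You instead exploit that $\theta\mapsto J(x,\tau;\theta)$ is affine with slope in $[0,1]$ and obtain the uniform Lipschitz bound $|u(x;\theta_1)-u(x;\theta_2)|\le|\theta_1-\theta_2|$ directly. This is more elementary, yields a stronger conclusion (Lipschitz rather than merely continuous, and uniform in $x$), and decouples (ii) from (iii). The paper's route, on the other hand, makes the dependence on the structure of the solution transparent via the explicit representation. Either argument is fine here.
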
 
\begin{proof}
Statement (i) follows directly from \cite[Thm.~3]{Alvarez2001}. For
the second part of (iii), we rewrite \eqref{eq:U-theta} as 
\[
J(x,\tau;\theta):=\theta+\E_{x}\Big[\int_{0}^{\tau}e^{-rs}\big(D(X_{s})-r\theta\big)ds\Big],
\]
from which it is clear that stopping is not optimal whenever $D(X_{s})-r\theta>0$.
The continuity of $\alpha$ is proved by contradiction. Assume that
there is a sequence $(\theta_{n})\subset[\theta_{L},\theta_{U}]$
converging to $\theta$ and $\alpha(\theta_{n})\to\hat{\alpha}\ne\alpha(\theta)$.
Since $\alpha(\theta_{n})$ is a global maximum of $a_{\theta_{n}}$,
we have $a_{\theta_{n}}(\alpha(\theta_{n}))\ge a_{\theta_{n}}(\alpha(\theta))$.
The mapping $(x,\theta)\mapsto a_{\theta}(x)$ is continuous, so $a_{\theta_{n}}(\alpha(\theta))\to a_{\theta}(\alpha(\theta))$
and $a_{\theta_{n}}(\alpha(\theta_{n}))\to a_{\theta}(\hat{\alpha})$.
This means that $a_{\theta}(\hat{\alpha})\ge a_{\theta}(\alpha(\theta))$.
This contradicts that $\alpha(\theta)$ is the unique global maximum
of $a_{\theta}(\cdot)$.

Statement (ii) can be deduced from the explicit formula \eqref{eq:U*}
and the continuity of $\alpha(\cdot)$.
\end{proof}

\edt{We turn the attention to an example on which we will illustrate
our theory.}

\edt{
\begin{example}
\label{xmp:1} Consider a geometric Brownian motion
$(X_{t})$, i.e., a solution to the SDE given by $dX_{t}=\mu X_{t}dt+bX_{t}dW_{t}$
for some $\mu<0$ and $b>0$. Its generator takes the form 
\[
\mathcal{L}_{X}=\frac{1}{2}b^{2}x^{2}\frac{d^{2}}{dx^{2}}+\mu x\frac{d}{dx}.
\]
The fundamental solutions to the differential equation $(\mathcal{L}_{X}-r)\varphi(x)=0$
are $\psi(x)=x^{\gamma_{+}}$ and $\phi(x)=x^{\gamma_{-}}$, where
\[
\gamma_{\pm}=\frac{1}{2}-\frac{\mu}{b^{2}}\pm\sqrt{(\frac{1}{2}-\frac{\mu}{b^{2}})^{2}+\frac{2r}{b^{2}}}\:.
\]
From $\mu<0$, it is obvious that $\gamma_{+}>1$ and $\gamma_{-}<0$,
i.e., $\psi$ is increasing while $\phi$ is decreasing.

We state now all conditions on the coefficients which will be required
in this example: 
\begin{equation}
\beta\in(0,1),\qquad r>\delta,\qquad\beta-1>\gamma_{-}>-1,\qquad\beta b^{2}|\gamma_{-}|<2r,\label{eqn:ass_exmpl}
\end{equation}
where $\delta=\beta\mu+b^{2}\beta(\beta-1)/2$ and $\beta$ will be used in the statement of the profit flow $D$ and $M$ below. Notice that the
second condition is automatically satisfied on a declining market,
$\mu\le0$, because $\delta\le0$ while $r$ is required to be strictly
positive. We keep it for reference.

We examine the case in which the duopoly and monopoly profit flows
are given by 
\[
D(x)=\begin{cases}
x^{\beta} & x\in(0,x_{M}]\\
x_{M}^{\beta} & x>x_{M}
\end{cases}\:,\qquad M(x)=D(x)+M_{0},
\]
for some fixed (large) $x_{M}$ to be determined later. We assume
that $M_{0}>r\theta_U$ so that $m(x)>\theta$ for all $\theta \in [\theta_L, \theta_U]$. We also assume a sufficiently
large value of $x_{M}$ so that $x_{M}^{\beta}>r\theta_U$. 
The function $D(\cdot)$ is strictly increasing for $x<x_{M}$ and
constant for $x\ge x_{M}$. This form of $D$ is economically realistic
because it is impossible to achieve an unboundedly large value of
profit stream. Appendix \ref{app:example} provides the proof that
this example satisfies all the assumptions of the paper and derives
the explicit form of $d(\cdot)$ and $m(\cdot)$.

\begin{figure}[tb]
\begin{centering}
{\includegraphics[scale=0.25]{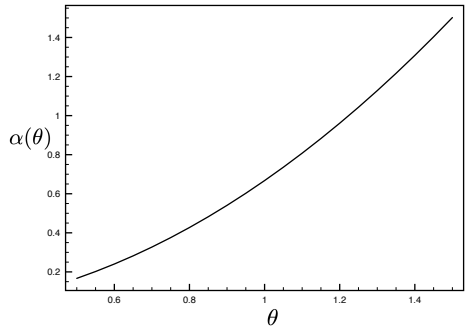}} 
\par\end{centering}
{\caption{Optimal stopping threshold $\alpha(\theta)$ for single player problem
with exit value $\theta$.}
\label{fig:alpha-theta}} 
\end{figure}

For numerical illustration, we examine the case $\mu=-0.5$, $b=1$,
$r=1$, $\beta=0.5$, $\theta_{L}=0.5$, $\theta_{U}=1.5$, $M_{0}=2$, and $x_{M}=1000$. It can
be verified that $\gamma_{-}=-0.732$, so it satisfies $\beta-1>\gamma_{-}>-1$,
and $\beta b^{2}|\gamma_{-}|=0.366<2=2r$. Furthermore,
$r>\delta=-.375$. Figure \ref{fig:alpha-theta} shows a graph
of $\alpha(\theta)$, the optimal stopping threshold for a single
player problem. As expected, it is an increasing function
because players with higher exit values exit earlier.
\end{example}
}

\section{Symmetric equilibrium}

\label{sec:equil} In this section, we construct a Nash equilibrium
in the exit game introduced in Section \ref{sec:model}. Apart from
all assumptions introduced so far in Sections \ref{sec:model} and
\ref{sec:single_player}, we make an additional standing assumption:
\begin{assumption}\label{ass:theta_cdf} Random variables $\theta_{i}$,
$i=1,2$, have the same cumulative distribution function $F$ which
is strictly increasing and continuous on its support $[\theta_{L},\theta_{U}]$.
\end{assumption}

\subsection{The strategy profile}

\label{subsec:The-strategy-profile}

We will now introduce a symmetric strategy profile which will be shown
to be a Nash equilibrium in the sense of Def.~\ref{def:Nash} \edt{as well as a perfect Bayesian equilibrium. We
start from an intuitive derivation of the form of such a strategy
profile and then provide a formal mathematical definition. Notice
that in equilibrium a player of type $\theta'$ would want to exit
earlier than a player of type $\theta''$ if $\theta'>\theta''$ because
a player would exit earlier if his outside option is more attractive.
Therefore, it is natural that the symmetric equilibrium strategy $\hat{\tau}(\cdot,\theta)$
(c.f. Corollary \ref{corr:equil}) should be monotone in the type
$\theta$. Based on the monotone property of $\hat{\tau}$, we
can hypothesise the existence of a well-defined stochastic process $(Y_{t})$ that
has the interpretation of the highest value of the remaining type for both players.
Thus, the posterior distribution of the remaining types at any point in time can be succinctly characterised by $(Y_{t})$ alone. Furthermore, it would be natural (bar technical difficulties) that $(Y_t)$ defined the strategy $\hat \tau$ via its inverse: $\hat \tau (\cdot, \theta) = \inf \{ t \ge 0: Y_t < \theta \}$.}

The above intuitive arguments motivate the introduction of player $i$'s strategy of the form 
\begin{equation}\label{eqn:tau_form}
\tau_{i} =\inf\{t\ge0:\ Y_{t}<\theta_{i}\},\quad i=1,2,
\end{equation}
where the process $(Y_{t})_{t\ge0}$ is decreasing, $(\cF_{t})$-adapted
and right-continuous with values in $[\theta_{L},\theta_{U}]$ such
that $Y_{0-}=\theta_{U}$. With an abuse of notation, we will treat
$(Y_{t})_{t\ge0}$ as a process on $(\tilde{\Omega},\tilde{\cF},\tilde{\P})$
when necessary; clearly, it is $(\tilde{\cF}_{t})$-adapted then.
Parametrisation \eqref{eqn:tau_form} of the strategy reflects the intuitive
meaning of the process $(Y_t)$ introduced above: on $\{Y_{t}=y\}$ all players of type $\theta>y$ have left
the game before or at time $t$. 

From a mathematical perspective the process $(Y_{t})$ is inconvenient
to work with as it starts from $Y_{0-}=\theta_{U}$ \edt{and decreases
to $\theta_{L}$, i.e, it depends explicitly on the support $[\theta_{L},\theta_{U}]$
and the distribution of types; see the dynamics \eqref{eqn:dynamics_Y}
of $(Y_{t})$ in the case when the type $\theta_{i}$ has an absolutely
continuous cumulative distribution function $F$.}
It turns out that a more convenient parametrisation is given by 
\[
A_{t}=\begin{cases}
-\log(F(Y_{t})), & Y_{t}>\theta_{L},\\
\infty, & Y_{t}=\theta_{L}.
\end{cases}
\]
We postulate a strategy profile defined in terms of $(A_{t})$ as
\begin{equation}
dA_{t}=\lambda(X_{t},Y(A_{t}))dt,\label{eqn:ODE_A}
\end{equation}
where 
\begin{equation}
Y(a)=F^{-1}(e^{-a}),\qquad a\ge0,\label{eqn:Y_def}
\end{equation}
and 
\begin{equation}
\lambda(x,y)=\frac{ry-D(x)}{m(x)-y}\ind{x\le\alpha(y)}.\label{eqn:lambda}
\end{equation}
\edt{By Assumption \ref{assump:Li} and Lemma \ref{lemm:conti-alpha}, the numerator of the function $\lambda$ is non-negative. The denominator is positive as $m(x) > \theta_U$ for any $x \in \cI$, see Assumption \ref{ass:functional}.}
\edt{Intuitively, $\exp(-A_{t})=F(Y_{t})$ has the
interpretation of the proportion of the types that remain in the game
at time $t$. It follows that $\lambda(X_{t},Y(A_{t}))$ has
the interpretation of the rate of exit of an opponent at time $t$.}

\edt{The heuristic motivation for the above form of the Nash equilibrium will be provided in Section \ref{subsec:heuristic} with formal mathematical
derivation presented in Section \ref{sec:uniqeness} along the uniqueness results; arguments
used there require properties of the process $(A_{t})$ and of the
best response stopping problems which we derive Sections \ref{subsec:A_t}-\ref{subsec:proofs}. }

For notational convenience, we introduce the following
function:
\begin{equation}
A(y)=-\log(F(y)),\qquad y\in(\theta_{L},\theta_{U}],\label{eqn:notation_A}
\end{equation}
and $A(\theta_{L})=\infty$. Notice that $a\mapsto Y(a)$ and $y\mapsto A(y)$
are decreasing and continuous functions on their domains. Furthermore,
$A(Y(a))=a$ and $Y(A(y))=y$.

\edt{We remark that the stopping times $\tau_i$ have an equivalent representation (c.f. Lemma \ref{lem:S_v_rewritten})
\begin{equation}
\tau_{i}=\inf\{t\ge0:\ Y_{t} \le \theta_{i}, X_{t} \le \alpha(\theta_{i})\},\quad i=1,2,\label{eqn:tau_PBE}
\end{equation}
which naturally links with the classical form of a solution to the best response optimal stopping problem and furnishes a perfect Bayesian equilibrium (PBE). The condition $Y_t \le \theta_i$ originates from the notion that $Y_t$ is the maximum type remaining in the game; if $Y_t$ hits 
$\theta_i$, it signals that it is time for player of type $\theta_i$ to exit. The other condition suggests that a player exits only when $X_t \le \alpha(\theta_i)$; this condition originates from the interpretation of $\alpha(\theta_i)$ as the exit threshold in the single-player problem. If $X_t > \alpha(\theta_i)$, the prospective profit stream is sufficiently large that a player does not have an incentive to exit.

The representation \eqref{eqn:tau_PBE} does not allow us to derive an optimisation problem in which the type of the opponent is integrated out and the action of his stopping time replaced by appropriate functional of the process $(Y_t)$, see Lemma \ref{lem:J_int}. In the following subsections the reader will notice the importance of this detail and how it is overcome in order to establish the PBE. Indeed, in Sections \ref{subsec:A_t}-\ref{subsec:Nash} we prove that $(\tau_1, \tau_2)$ is a Nash equilibrium; this is followed in Section \ref{subsec:PBE} by arguments showing that \eqref{eqn:tau_PBE} yields a PBE.}

\begin{excont}[continued]

\edt{For intuitive understanding, we now return to the
numerical example introduced in Section \ref{sec:single_player} and
assume that the type $\theta_{i}$ is uniformly distributed within
an interval $[\theta_{L},\theta_{U}]$; recall that $\theta_{L}=0.5$ and
$\theta_{U}=1.5$. 
A simulated realisation of the game is presented in Figure \ref{fig:X}.
It illustrates a sample path of $(X_{t}, Y_{t})$ and $\alpha(Y_{t})$
with initial conditions $X_{0}=2.72$ and $Y_{0}=\theta_{U}$ over
a time interval $[0,2]$.
\begin{figure}[tb]
\begin{subfigure}[b]{0.47\textwidth} \centering \includegraphics[scale=0.25]{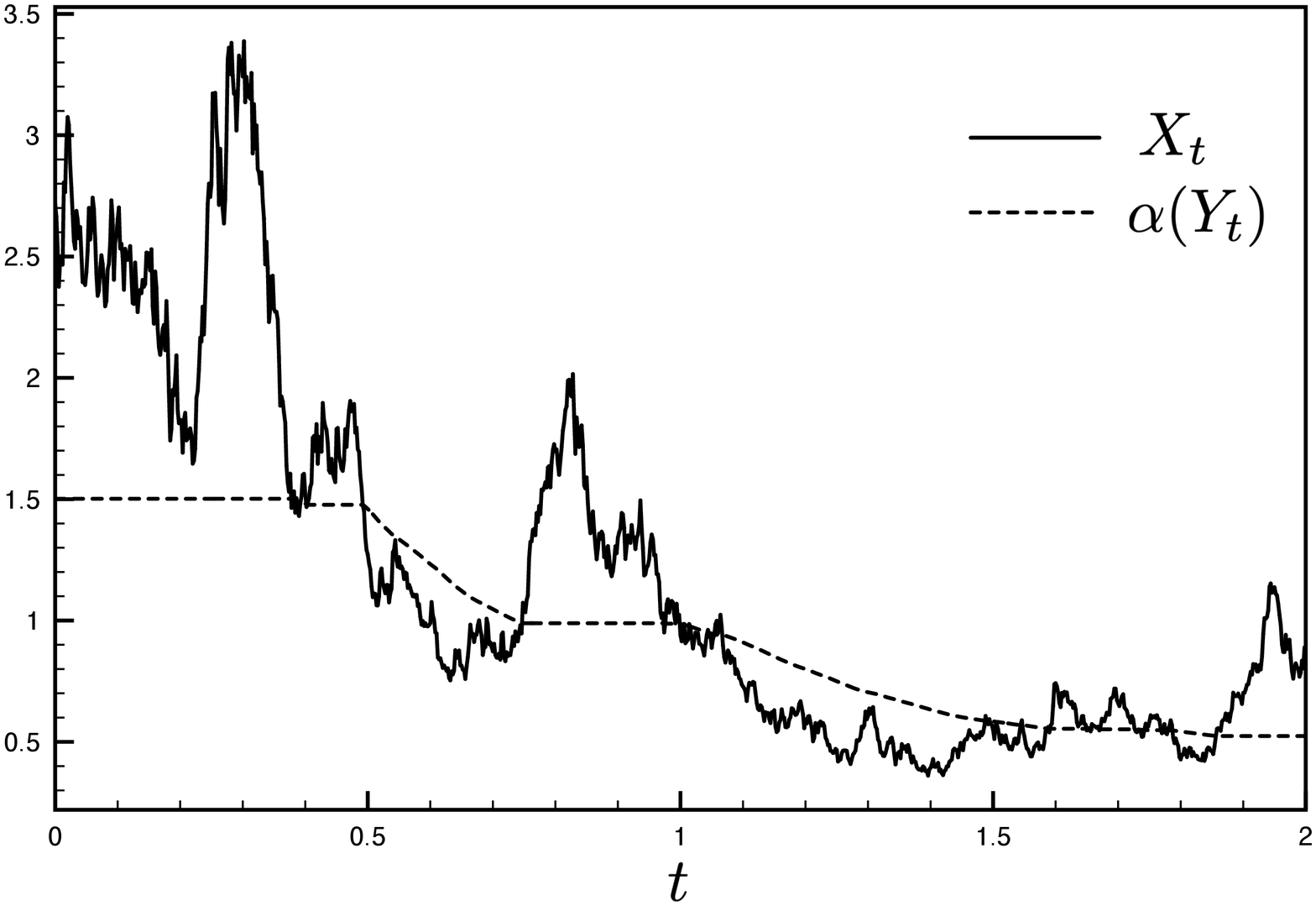}
\subcaption{A sample path of $X_{t}$ and $\alpha(Y_{t})$.} \end{subfigure}
\begin{subfigure}[b]{0.47\textwidth} \centering \includegraphics[scale=0.25]{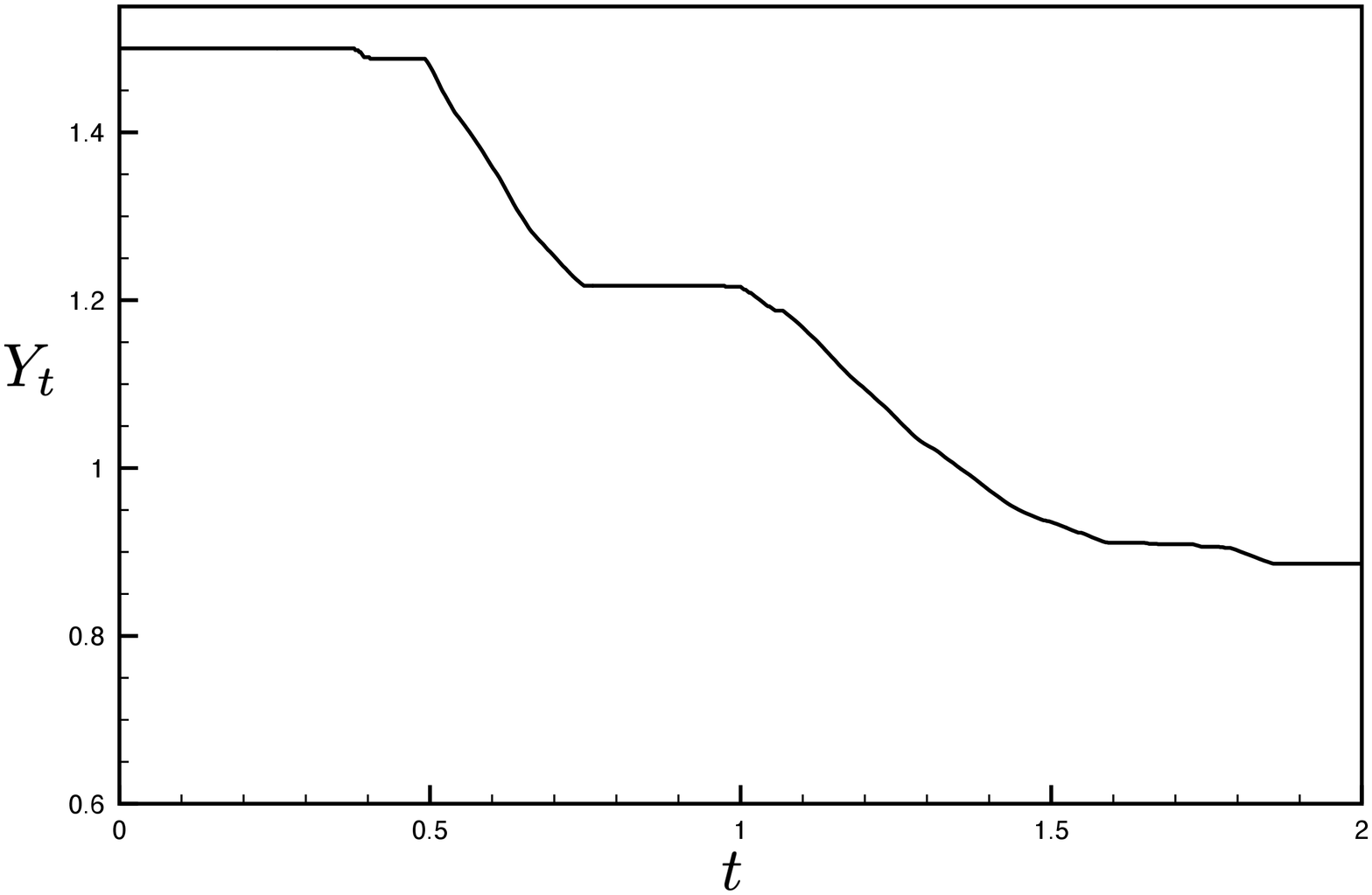}
\subcaption{A sample path of $Y_{t}$.} \end{subfigure} \caption{Example of the evolution of $(X_{t})$ and the corresponding processes
$(Y_{t})$ and $\alpha(Y_{t})$ with initial conditions $X_{0}=2.72$
and $Y_{0}=\theta_{U}$ over time interval $[0,2]$.}
\label{fig:X} 
\end{figure}

The functional form of $\lambda(\cdot,\cdot)$ given in (\ref{eqn:lambda})
suggests that there are two regions: an exit region $X_{t}\le\alpha(Y_{t})$
and a no-action region $X_{t}>\alpha(Y_{t})$. According to the strategy
profile \eqref{eqn:tau_PBE}, players may exit only when $X_{t}\le\alpha(Y_{t})$, which
results in a positive value of $\lambda(X_{t},Y_{t})$. This feature
of the strategy profile is illustrated by Figure \ref{fig:X}, where
$(Y_{t})$ decreases only when $X_{t}\le\alpha(Y_{t})$. For instance,
in the time intervals $[0,0.378]$ and $[0.748, 0.974]$, $X_{t}$
stays above $\alpha(Y_{t})$, so $Y_{t}$ stays constant within these
intervals. In contrast, $X_{t}\le\alpha(Y_{t})$ in the intervals $[0.494,
0.746]$ and $[1.068, 1.486]$, so $(Y_{t})$ declines steadily in
time.

Next, we illustrate an individual player's strategy. A
player of type $\theta\ge Y_{t}$ follows a threshold exit strategy: to exit
as soon as $X_{t}\le\alpha(\theta)$. This is intuitively consistent
with the optimal policy for a single-player case. On
the other hand, any player of type $\theta'<Y_{t}$ will wait until
$Y_{t}$ hits $\theta'$, at which point in time he will also adopt
a threshold exit strategy, i.e., to exit when $X_{t}\le\alpha(\theta)$. 
\edt{In Nash equilibrium, one would never encounter a player of type $\theta > Y_t$; when $\theta = Y_t$ and $X_t \le \alpha(\theta)$, the dynamics \eqref{eqn:ODE_A} of $A_t$ implies that $Y_s = Y(A_s) < \theta$ for any $s > t$, so infimum of times that the condition $Y_s < \theta$ is satisfied is $t$ which justifies the equivalence of definitions \eqref{eqn:tau_form} and \eqref{eqn:tau_PBE}. This equivalence does not hold when $\theta > Y_t$, so only a strategy of the form \eqref{eqn:tau_PBE} defines a PBE.}

In the example shown in Figure \ref{fig:X}, a player of type $\theta= 1.5$
does not exit until $X_{t}$ hits $\alpha(1.5)$ at $t=0.378$. Because
a player of type $1.5$ is supposed to be the first type to exit under
the prescribed strategy profile ($\theta_{U}=1.5$), he exits as soon
as $X_{t}\le\alpha(1.5)=\alpha(Y_{0})$ is satisfied. If the process
$(X_{t})$ had started out below $\alpha(1.5)$, then the player would
have exited right away at time $t=0$.

On the other hand, any player of type $\theta<Y_{0}$ has to wait
beyond $t=0.378$ because his prescribed time of exit is $\hat{\tau}(\cdot,\theta)=\inf\{t\ge0:Y_{t} < \theta\}$.
It follows that Figure \ref{fig:X}(B) can be utilised to determine
the time of exit for any given type: we simply have to invert the
$t-Y_{t}$ graph into $Y_{t}-t$ graph and relabel the horizontal
variable as $\theta$ and the vertical variable as $\hat\tau(\cdot, \theta)$.
The result is Figure \ref{fig:tau-th}. It illustrates the property
of the strategy profile that the exit time is monotonically decreasing
in the type.

\begin{figure}[tb]
\begin{centering}
{\includegraphics[scale=0.35]{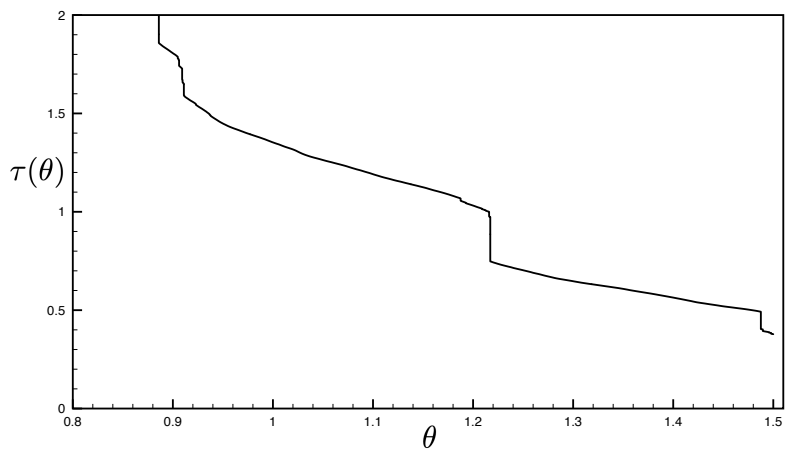}} 
\par\end{centering}
{\caption{Simulated values of $\hat{\tau}(\cdot,\theta)$.}
\label{fig:tau-th}} 
\end{figure}

The PBE strategy \eqref{eqn:tau_PBE} of type $\theta$ can be succinctly represented as an exit region in the $x$-$y$ space defined as $\{(x,y):x\le\alpha(\theta),y\le\theta\}$. Figure \ref{fig:Y-X} shows a simulated sample path of $(X_{t},Y_{t})$
as well as the exit region for type $\theta=1$ indicated by the shaded rectangle. The player of type $\theta=1$ exits
as soon as $(X_{t},Y_{t})$ hits the shaded exit region, which takes
place at time $t=1.352$ when $X_{t}=0.432$ and $Y_{t}=1$.

\begin{figure}[tb]
\begin{centering}
{\includegraphics[scale=0.35]{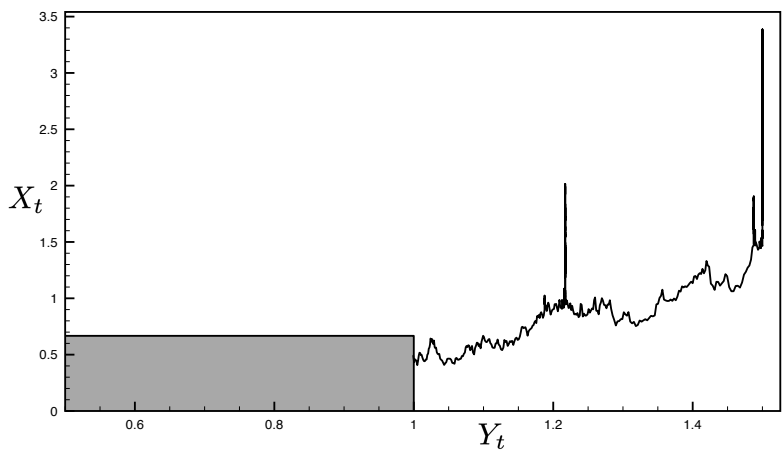}}
\par\end{centering}
{\caption{Simulated path of $(X_t, Y_t)$. The exit region for $\theta=1$ is shaded.}
\label{fig:Y-X}}
\end{figure}

Next, we illustrate the temporal evolution of
a game by analysing the dynamics of $(X_{t},Y_{t})$ and the prescribed
strategy for each player. Suppose that $\theta_{1}=1.4$ and $\theta_{2}=1$.
At the beginning of the game, each player knows his own type but not
his opponent's. However, they both know the strategy profile and the
initial probability distribution of their opponent's type, just as
in the standard game-theoretic assumption. According to the strategy
profile, player $i$ exits at the earliest time such that $Y_{t}<\theta_{i}$. As the game progresses, each player
observes whether his opponent exits or not. If both players remain
in the game until time $t$, they update their posterior beliefs about
their opponent's type by using the dynamics of $(Y_{t})$ given by \eqref{eqn:ODE_A}.
Finally, Player 1 exits at time $\hat{\tau}(\cdot,1.4)=0.564$
(see Figure \ref{fig:tau-th}), revealing his type publicly. Player 2 never exits
under this scenario, and he consequently enjoys
the monopoly from time $\hat{\tau}(\cdot,1.4)$ onwards.
}
\end{excont}

As indicated before (see Corollary \ref{corr:equil}), the proof that
the strategy profile given by \eqref{eqn:tau_form} with $Y_{t}=Y(A_{t})$,
for $A_{t}$ defined above, is a Nash equilibrium will require examination
of the optimal stopping problem with the functional $J(x,\sigma,\tau_{i};\gamma)$
defined in \eqref{eqn:J}. Recall that $\tau_{i}$ depends on $\theta_{i}$
which is not observable by the rival firm; formally, $\theta_{i}$
is independent from $\cF$. We will therefore integrate out $\theta_{i}$
in the following lemma. The statement is formulated for a general
process $(A_{t})_{t\ge0}$ as this does not lead to any additional
complications in the proof.

\begin{lemma}\label{lem:J_int} Let $\tau_{i}$ be of the form \eqref{eqn:tau_form}
with $Y_{t}=Y(A_{t})$ for a process $(A_{t})_{t\ge0}$ which is $(\cF_{t})$-adapted,
right-continuous and increasing with $A_{0-}=0$. For any $(\cF_{t})$-stopping
time $\sigma$ we have 
\begin{equation}
J(x,\sigma,\tau_{i};\gamma)=\E_{x}\Big[\int_{0}^{\sigma}e^{-rs-A_{s}}D(X_{s})ds+\gamma e^{-r\sigma-A_{\sigma-}}+\int_{[0,\sigma)}e^{-rs-A_{s}}m(X_{s})dA_{s}\Big].\label{eqn:J_int}
\end{equation}
\end{lemma}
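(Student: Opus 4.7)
The plan is to integrate out the private type $\theta_i$ using its independence from $\cF$, on which both $(A_{t})$ and $\sigma$ are measurable. The convenient reparametrisation is $\xi_i := -\log F(\theta_i)$: by Assumption~\ref{ass:theta_cdf}, $F$ is continuous and strictly increasing on $[\theta_L,\theta_U]$, so $\xi_i$ is standard exponential and independent of $\cF$. Combining \eqref{eqn:Y_def} with \eqref{eqn:tau_form} rewrites $\tau_i = \inf\{t\ge 0: A_t > \xi_i\}$. Since $(A_t)$ is $(\cF_t)$-adapted, right-continuous and increasing, for every deterministic $t\ge 0$ the conditional probabilities
\[
\tl\P(\tau_i > t \mid \cF) = e^{-A_t}, \qquad \tl\P(\tau_i \ge t \mid \cF) = e^{-A_{t-}}
\]
follow from the tower property and the law of $\xi_i$; a standard approximation of the $(\cF_t)$-stopping time $\sigma$ by simple stopping times then extends both identities with $t$ replaced by $\sigma$. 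Equivalently, the conditional distribution function of $\tau_i$ given $\cF$ is $s\mapsto 1-e^{-A_s}$, which is right-continuous, as required for Lebesgue--Stieltjes integration.

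With these identities in hand, I would split $J(x,\sigma,\tau_i;\gamma)$ into its three natural summands and evaluate each by a Fubini-and-tower argument. For the duopoly-profit term, rewriting $\int_0^{\sigma\wedge\tau_i}=\int_0^\sigma \ind{\tau_i>t}\,dt$ and using the boundedness of $D$ to justify Fubini immediately yields $\E_x\int_0^\sigma e^{-rs-A_s}D(X_s)\,ds$. For the exit-value term, conditioning on $\cF$ in $\gamma\tl\E_x[e^{-r\sigma}\ind{\sigma\le\tau_i}]$ and applying the second identity above at $t=\sigma$ produces $\gamma\E_x[e^{-r\sigma-A_{\sigma-}}]$.

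The only step with genuine bookkeeping is the monopoly-value term. Conditioning on $\cF$ rewrites $\tl\E_x[e^{-r\tau_i}m(X_{\tau_i})\ind{\tau_i<\sigma}]$ as $\E_x$ of a Lebesgue--Stieltjes integral of $e^{-rs}m(X_s)$ against the conditional distribution function $s\mapsto 1-e^{-A_s}$ restricted to $[0,\sigma)$, producing $-\E_x\int_{[0,\sigma)}e^{-rs}m(X_s)\,d(e^{-A_s})$. In the intended application $(A_t)$ is absolutely continuous (cf.\ \eqref{eqn:ODE_A}), so $-d(e^{-A_s})=e^{-A_s}\,dA_s$ and this term collapses to $\E_x\int_{[0,\sigma)}e^{-rs-A_s}m(X_s)\,dA_s$. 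Summing the three pieces gives \eqref{eqn:J_int}. The main obstacle is the careful bookkeeping of left versus right limits of $A$ at the random endpoint $\sigma$ (in order to get $A_{\sigma-}$ in the middle term and $A_s$ in the Stieltjes integral); this is handled using $\tl\P(\xi_i=A_\sigma\mid \cF)=\tl\P(\xi_i=A_{\sigma-}\mid \cF)=0$, which holds because $\xi_i$ has a continuous distribution and is independent of $\cF$.
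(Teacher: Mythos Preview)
Your proposal is correct and follows essentially the same route as the paper: condition on $\cF$ (equivalently $\tilde\cF_\infty$), integrate out the private type, and express the monopoly term as a Lebesgue--Stieltjes integral against $d(1-e^{-A_s})$. The differences are cosmetic---your exponential reparametrisation $\xi_i=-\log F(\theta_i)$ replaces the paper's explicit change of variables via \cite[Ch.~0, Prop.~4.9]{revuzyor}, and you are more explicit than the paper that the final identification $-d(e^{-A_s})=e^{-A_s}\,dA_s$ uses continuity of $(A_t)$.
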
 Proof of this and other technical results are collected
in Section \ref{subsec:proofs}.

\subsection{Heuristic derivation of $\lambda(x,y)$}

\label{subsec:heuristic}

\edt{Before we delve into the mathematical proof of
the equilibrium, we provide a heuristic derivation of the form of
$\lambda(x,y)$ given in (\ref{eqn:lambda}). We first assume a current
value of $Y_{t}=\theta_{c}$ at time $t$ and consider the regime
$X_{t}>\alpha(Y_{t})=\alpha(\theta_{c})$. Recall that we have established
that the optimal policy for a single-player problem is never to exit
for $X_{t}>\alpha(\theta_{c})$ if the player is of type $\theta_{c}$.
Even if his opponent is present, this optimal policy does not change
for $X_{t}>\alpha(\theta_{c})$ as the opponent's action can only increase the profit flow compared to duopoly payoff in the single-player problem. Hence, a player of type $\theta_{c}$ should not exit in equilibrium if $X_{t}>\alpha(\theta_{c})$. From
the monotone property of $\theta\mapsto\hat{\tau}(\cdot,\theta)$,
it follows that any type $\theta'<\theta_{c}$ should not exit if
$X_{t}>\alpha(\theta_{c})$. Since all types of $\theta'>\theta_{c}$
already exited in the past, no one exits for as long as $X_{t}>\alpha(Y_{t})$.
It follows that $\lambda(X_{t},Y_{t})=0$ if $X_{t}>\alpha(Y_{t})$.

We now consider the exit region $X_{t}\le\alpha(\theta_{c})$.
By the arguments established above, it is the type $\theta_{c}$ that
should decide when to exit; the types $\theta<\theta_{c}$ would first
wait until after $\theta_{c}$ exits, and the types $\theta>\theta_{c}$
should have already exited by now. Thus, we impose the condition that
the best response of a type $\theta<\theta_{c}$ is to wait at least
an infinitesimal time while the best response of a type $\theta_{c}$
is to exit immediately. 

Next, let $X_{t}=x \le \alpha(\theta_c)$ and $A_{t}=A(\theta_{c})$. As argued
before it is suboptimal to delay exit by a small time $\delta t>0$
for a player of type $\theta_{c}$. Using \eqref{eqn:J_int} and \eqref{eqn:ODE_A}, we have
\begin{align*}
 & 0\ge\\
 & \E\Big[\int_{0}^{t+\delta t}e^{-rs-A_{s}}D(X_{s})ds+\theta_c e^{-r(t+\delta t)-A_{(t+\delta t)-}}+\int_{[0,t+\delta t)}e^{-rs-A_{s}}m(X_{s})dA_{s}\Big|X_{t}=x,A_{t}=A(\theta_{c})\Big]\\
 & -\E\Big[\int_{0}^{t}e^{-rs-A_{s}}D(X_{s})ds+\theta_c e^{-rt-A_{t-}}+\int_{[0,t)}e^{-rs-A_{s}}m(X_{s})dA_{s}\Big|X_{t}=x,A_{t}=A(\theta_{c})\Big]\\
 &\hspace{-3pt} =e^{-rt-A_{t-}}\Big(\theta_c+\delta t\big[D(x)+\lambda(x,\theta_{c})m(x)-\theta_c(r+\lambda(x,\theta_{c}))]+\mathcal{O}\big((\delta t)^{2}\big)\Big).
\end{align*}
From this, the leading-order term of $\delta t$ must be non-positive,
which yields the inequality 
\begin{equation}
\lambda(x,\theta_{c})\le\frac{r\theta_c-D(x)}{m(x)-\theta_c}.\label{eq:less-th}
\end{equation}
On the other hand, it is suboptimal for Player $i$ of type $\theta<\theta_{c}$
to exit immediately at $t$. Assuming that waiting an infinitesimally
short time $\delta t>0$ is strictly better (which we leave without
any formal justification), an analogous argument as above gives 
\begin{equation}
\lambda(x,\theta_{c})>\frac{r\theta-D(x)}{m(x)-\theta},\qquad\theta<\theta_{c}.\label{eq:larger-th}
\end{equation}
Note that both conditions (\ref{eq:less-th}) and (\ref{eq:larger-th})
automatically enforce that $\lambda(x,\theta_{c})=(r\theta_{c}-D(x))/(m(x)-\theta_{c})$.
From the arbitrariness of $\theta_{c}$ and $x\le\alpha(\theta_{c})$
and together with the condition that $\lambda(x,y)=0$ for $x>\alpha(y)$,
we finally conclude the form (\ref{eqn:lambda}) for $\lambda$.}

\subsection{Construction and properties of $(A_{t})_{t\ge0}$}\label{subsec:A_t}

Standard theory of ODEs cannot be applied to obtain existence and
uniqueness of solutions to \eqref{eqn:ODE_A} because the function
$\lambda$ is discontinuous for each trajectory of $(X_{t})$. Instead,
we construct the process $(A_{t})$ that satisfies the integral form
of \eqref{eqn:ODE_A}. We start with a number of technical results.

We first review the continuity of the process $(X_{t}^{x})_{t\ge0}$
with respect to the initial value $x$. \cite[Thm.~1, p.~102]{krylov}
implies that for any $T>0$ and $q\ge1$ we have for any $x^{n}\to x$
in $\cI$, 
\begin{equation}
\lim_{n\to\infty}\E\big[\sup_{s\in[0,T]}|X_{s}^{x_{n}}-X_{s}^{x}|^{q}\big]=0.\label{eqn:cont_X}
\end{equation}
The dependence of $X_{t}^{x}$ on the initial point $x$ is monotone
due to the comparison principle for diffusions. Hence, by the monotone
convergence theorem, \eqref{eqn:cont_X} implies that if the sequence
$(x_{n})$ is monotone, then for each $T>0$, there is a $\P$-negligible
subset of $\Omega$ outside of which 
\begin{equation}
\lim_{n\to\infty}\sup_{s\in[0,T]}|X_{s}^{x_{n}}-X_{s}^{x}|=0.\label{eqn:cont_X_mono}
\end{equation}

\edt{The infinite variation of trajectories of the process $(X_{t})$,
which follows from the non-degeneracy of the diffusion coefficient
$\sigma$, yield the following result.}

\begin{lemma}\label{lem:equal_null} For any \cadlag finite variation
$(\cF_{t})$-adapted process $(\varphi_{t})$, we have 
\[
\int_{0}^{\infty}\ind{X_{s}=\varphi_{s}}ds=0,\quad\P_{x}-a.s.
\]
for any $x\in\cI$. \end{lemma}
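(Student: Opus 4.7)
My plan is to use the occupation times formula for semimartingales applied to $Z_t := X_t - \varphi_t$. Since $(\varphi_t)$ is a \cadlag finite variation $(\cF_t)$-adapted process, it is a semimartingale, and so is $(Z_t)$. The continuous part of its quadratic variation coincides with that of $(X_t)$: because $\varphi$ is of finite variation, $[\varphi,\varphi]^c \equiv 0$; and because $X$ has continuous paths, $[X,\varphi] = \sum_{s\le \cdot} \Delta X_s \Delta \varphi_s = 0$. Hence $d[Z,Z]^c_t = \volatility^2(X_t)\, dt$.

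Applying the occupation times formula (see, e.g., Protter, Ch.~IV, or Revuz--Yor, Ch.~VI) with the indicator of $\{0\}$ yields
\begin{equation*}
\int_0^t \ind{Z_{s-}=0}\, \volatility^2(X_s)\, ds \;=\; \int_0^t \ind{\{0\}}(Z_{s-})\, d[Z,Z]^c_s \;=\; \int_\R \ind{\{0\}}(a)\, L^a_t(Z)\, da \;=\; 0,
\end{equation*}
as the Lebesgue integral of the indicator of a single point vanishes. The set $\{s : Z_{s-} \neq Z_s\}$ (equal to the jump set of $\varphi$) is countable and hence Lebesgue-negligible, so replacing $Z_{s-}$ by $Z_s$ gives $\int_0^t \ind{Z_s=0}\,\volatility^2(X_s)\, ds = 0$.

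To upgrade this to $\int_0^t \ind{Z_s=0}\, ds = 0$, I would localise, as $\volatility$ is only assumed to be pointwise strictly positive. Take an increasing sequence of compact subintervals $[x_L^n, x_U^n] \uparrow \cI$ and set $\tau_n = \inf\{s\ge 0 : X_s \notin [x_L^n, x_U^n]\}$. Unattainability of the boundaries of $\cI$ gives $\tau_n \uparrow \infty$ $\P_x$-a.s., while continuity and strict positivity of $\volatility$ supply $c_n > 0$ with $\volatility^2 \ge c_n$ on $[x_L^n, x_U^n]$. Therefore
\begin{equation*}
c_n \int_0^{t\wedge \tau_n} \ind{Z_s=0}\, ds \;\le\; \int_0^{t\wedge \tau_n} \ind{Z_s=0}\, \volatility^2(X_s)\, ds \;=\; 0,
\end{equation*}
and monotone convergence in $n$ and then $t$ delivers $\int_0^\infty \ind{X_s=\varphi_s}\, ds = 0$ $\P_x$-a.s.

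The only real subtlety is the localisation to deal with non-uniform ellipticity, which is routine given that the boundaries are unattainable. Conceptually, the non-degeneracy of the martingale part of $X$ prevents $X$ from spending positive Lebesgue time at any deterministic level, and the key observation is that the same conclusion persists when the level is replaced by a \cadlag finite variation adapted process, precisely because such a process contributes nothing to the continuous quadratic variation of $Z = X - \varphi$.
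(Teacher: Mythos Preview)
Your proof is correct and follows essentially the same approach as the paper: both apply the occupation times formula to the semimartingale $Z=X-\varphi$, identify $d[Z,Z]^c_t=\volatility^2(X_t)\,dt$, and use strict positivity of $\volatility$ to conclude. The only differences are cosmetic: the paper applies the occupation times formula to $\ind{[-\varepsilon,\varepsilon]}$ and then lets $\varepsilon\downarrow 0$ via dominated convergence, whereas you apply it directly to $\ind{\{0\}}$; and the paper simply invokes $\volatility>0$ pointwise (which already suffices, since a nonnegative integrand with zero integral vanishes a.e.), whereas you spell out an explicit localisation---correct but not strictly needed.
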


\edt{Thanks to this lemma, modification of $\lambda$ on the right-hand
side of \eqref{eqn:ODE_A} on a (countable) number of curves of the
form $x=h(y)$ for continuous $h$ does not affect the
solution in the sense that if a process $(A_{t})$ satisfies \eqref{eqn:ODE_A}
then so it does with the modified $\lambda$. This will play an important
role in the construction of the solution in Proposition \ref{prop:solution_A}
as well as in Section \ref{sec:uniqeness} in which uniqueness of
equilibrium is established.}

\begin{lemma}\label{lem:lambda} For any $x\le\alpha(\theta_{U})$,
the mapping 
\[
[\theta_{L},\theta_{U}]\ni y\mapsto l(x,y):=\frac{ry-D(x)}{m(x)-y}
\]
is strictly increasing with the derivative 
\begin{equation}
0<\frac{rm(x)-D(x)}{(m(x)-y)^{2}}\le\frac{r\,m_{\min}}{(m_{\min}-\theta_{U})^{2}},\label{eqn:derivative_lambda}
\end{equation}
where $m_{\min}=\inf_{x\in\cI}m(x)$. \end{lemma}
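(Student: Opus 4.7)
The proof is essentially a direct computation followed by elementary monotonicity estimates. The plan is to first apply the quotient rule to $l(x,y)=(ry-D(x))/(m(x)-y)$ to obtain, after simplification,
\[
\frac{\partial l}{\partial y}(x,y)=\frac{r(m(x)-y)+(ry-D(x))}{(m(x)-y)^{2}}=\frac{rm(x)-D(x)}{(m(x)-y)^{2}},
\]
which is exactly the middle expression in \eqref{eqn:derivative_lambda}. The lemma then reduces to producing a strict lower bound of $0$ and a uniform upper bound on this quantity over $y\in[\theta_L,\theta_U]$ for fixed $x\le\alpha(\theta_U)$, and concluding strict monotonicity in $y$ from the positivity of the derivative.

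For the strict positivity, the restriction $x\le\alpha(\theta_U)$ is crucial. By Lemma \ref{lemm:conti-alpha}(iii) we have $\alpha(\theta_U)\le c(\theta_U)$, so Assumption \ref{assump:Li}(i) together with the continuity of $D$ yields $D(x)\le r\theta_U$. Combined with $m(x)\ge m_{\min}>\theta_U$ from Assumption \ref{ass:functional}, this gives $rm(x)-D(x)\ge r(m_{\min}-\theta_U)>0$, and since the denominator is positive the derivative is strictly positive; strict monotonicity of $y\mapsto l(x,y)$ follows immediately.

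For the upper bound I would first use $D\ge 0$ to pass to $rm(x)-D(x)\le rm(x)$. The non-trivial task is then bounding $m(x)/(m(x)-y)^{2}$ uniformly in $x$. The key observation is that $z\mapsto z/(z-y)^{2}$ has derivative $-(z+y)/(z-y)^{3}$ and is therefore strictly decreasing on $[m_{\min},\infty)$ (using that $z+y\ge m_{\min}+\theta_L>0$ in the regime of interest). Applying this at $z=m(x)\ge m_{\min}$ gives $m(x)/(m(x)-y)^{2}\le m_{\min}/(m_{\min}-y)^{2}$, and then monotonicity of $y\mapsto(m_{\min}-y)^{-2}$ for $y<m_{\min}$ yields the claimed bound $m_{\min}/(m_{\min}-\theta_U)^{2}$ upon taking $y\le\theta_U$. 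The only point requiring any thought is this two-step monotonicity: bounding numerator and denominator separately gives a cruder constant (depending on $\sup_x m(x)$ rather than $m_{\min}$), so one must treat the ratio $z/(z-y)^{2}$ as a single object.
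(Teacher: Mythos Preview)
Your proof is correct and follows essentially the same approach as the paper: compute the derivative by the quotient rule, use $D(x)\le r\theta_U$ and $m(x)>\theta_U$ for positivity, and obtain the upper bound via the monotonicity of $z\mapsto z/(z-c)^2$. The only cosmetic difference is the order of the two estimates---the paper first replaces $y$ by $\theta_U$ in the denominator and then invokes the decrease of $z\mapsto z/(z-\theta_U)^2$, whereas you first pass to $m_{\min}$ via the decrease of $z\mapsto z/(z-y)^2$ and then bound in $y$; both routes yield the same constant.
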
 \begin{lemma}\label{lem:monotonicity_lambda}
The mapping $(x,a)\mapsto\lambda(x,Y(a))$ is decreasing in $x$ and
$a$. \end{lemma}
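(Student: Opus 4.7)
The plan is to combine four elementary monotonicity facts: (i) $Y(\cdot)$ is strictly decreasing by construction; (ii) $\alpha(\cdot)$ is increasing by Assumption \ref{assump:Li}(iii); (iii) for fixed $y$, the ratio $l(x,y):=(ry-D(x))/(m(x)-y)$ is non-increasing in $x$, because $D$ is increasing (Assumption \ref{ass:functional}), the denominator $m(x)-y>0$ is increasing in $x$ (as $M$ is increasing and the comparison principle for \eqref{eqn:X} gives $X^x_t$ monotone in $x$), and on $\{x\le \alpha(y)\}$ the numerator satisfies $ry-D(x)\ge 0$; and (iv) for fixed $x$, $l(x,y)$ is strictly increasing in $y$ by Lemma \ref{lem:lambda}. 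The one subtlety is the jump created by the indicator $\ind{x\le\alpha(y)}$ in \eqref{eqn:lambda}: I need the formula value $l(x,y)$ to be non-negative precisely where the indicator is on. This is guaranteed by $\alpha(y)\le c(y)$ (Lemma \ref{lemm:conti-alpha}(iii)) together with Assumption \ref{assump:Li}(i) and continuity of $D$, which together yield $D(x)\le r y$ whenever $x\le\alpha(y)$.

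For monotonicity in $x$, I fix $a$ and set $y=Y(a)$ and split $\cI$ at $\alpha(y)$. On $(x_L,\alpha(y)]$ the indicator equals $1$ and $x\mapsto l(x,y)$ is non-negative and non-increasing by fact (iii). On $(\alpha(y),x_U)$ the indicator vanishes and $\lambda(x,y)=0$. Any jump at $x=\alpha(y)^+$ is from the non-negative value $l(\alpha(y),y)$ down to $0$, which preserves the (non-strict) decrease.

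For monotonicity in $a$, I fix $x$ and take $a_1<a_2$, so $y_1:=Y(a_1)>Y(a_2)=:y_2$ and $\alpha(y_1)\ge\alpha(y_2)$ by (ii). Three cases cover the argument: if $x>\alpha(y_1)$, then $x>\alpha(y_2)$ as well and both $\lambda$-values equal $0$; if $x\le \alpha(y_2)$, then both indicators are on and $l(x,y_1)\ge l(x,y_2)$ by (iv); in the intermediate case $\alpha(y_2)<x\le \alpha(y_1)$, one has $\lambda(x,y_2)=0$ while $\lambda(x,y_1)=l(x,y_1)\ge 0$ by the non-negativity argument. In all three cases $\lambda(x,Y(a_1))\ge\lambda(x,Y(a_2))$, completing the plan. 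No step is expected to pose serious difficulty; the only place where care is needed is bookkeeping around the discontinuity of $\lambda$ at $x=\alpha(y)$, which is precisely what facts (iii) and the bound $\alpha(y)\le c(y)$ are designed to handle.
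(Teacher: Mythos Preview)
Your proof is correct and follows essentially the same approach as the paper: both arguments split according to whether the indicator $\ind{x\le\alpha(y)}$ is active, use the non-negativity of $l(x,y)$ on that set (via $\alpha(y)\le c(y)$), the monotonicity of $l$ in $y$ from Lemma~\ref{lem:lambda}, and the monotonicity of $l$ in $x$ from $D$ and $m$ being increasing. The only cosmetic difference is that for the $a$-variable the paper parametrises the active set as an interval $[0,a^*(x)]$ rather than doing your explicit three-case split, but the content is identical.
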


\edt{The above basic properties of the expression defining $\lambda$ in \eqref{eqn:lambda}
are key to the construction of a solution to \eqref{eqn:ODE_A} as
well as to the study of the best response optimal stopping problems.
Notice also that $l(x,y)\ge0$ for $x\le\alpha(y)$ thanks to Lemma
\ref{lemm:conti-alpha}, so the function $\lambda$ is non-negative.
}

\begin{proposition}\label{prop:solution_A} There is a strong Markov
process $(X_{t},A_{t})_{t\ge0}$ such that $(X_{t})$ solves \eqref{eqn:X}
and $(A_{t})$ is a continuous process which satisfies 
\begin{equation}
A_{t}=A_{0}+\int_{0}^{t}\lambda(X_{s},Y(A_{s}))ds,\qquad t\ge0,\quad\P_{x}-a.s.\label{eqn:int_A}
\end{equation}
i.e., it is a Carath\'{e}odory solution to \eqref{eqn:ODE_A}. \end{proposition}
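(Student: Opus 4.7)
The plan is to construct $(A_t)$ as the monotone decreasing limit of solutions to approximating random ODEs with continuous right-hand sides, and then to pass to the limit using Lemma~\ref{lem:equal_null} to cope with the discontinuity of $\lambda$ along the curve $\{x=\alpha(y)\}$. Let $\rho_n:\R\to[0,1]$ be continuous, equal to one on $(-\infty,0]$, vanishing on $[1/n,\infty)$, and such that $\rho_{n+1}\le\rho_n$ (e.g., $\rho_n(s)=\max(0,1-ns)$ on $[0,\infty)$). Define
\[
\lambda_n(x,y)=l^+(x,y)\,\rho_n(x-\alpha(y)),\qquad l(x,y)=\frac{ry-D(x)}{m(x)-y}.
\]
By Lemma~\ref{lemm:conti-alpha}(iii) $\alpha$ is continuous, so each $\lambda_n$ is continuous; by Lemma~\ref{lem:lambda} and Assumption~\ref{ass:functional} $l$ is bounded on $\cI\times[\theta_L,\theta_U]$, so the $\lambda_n$ are uniformly bounded by a constant $K$, and $\lambda_n\downarrow\lambda$ pointwise. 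Arguing as in Lemma~\ref{lem:monotonicity_lambda}, the map $(x,a)\mapsto\lambda_n(x,Y(a))$ is decreasing in both arguments: an increase of $a$ (which decreases $Y(a)$) or of $x$ forces both non-negative factors $l^+$ and $\rho_n(x-\alpha(Y(a)))$ to decrease simultaneously.

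For each $n$ and each sample path, the random ODE $\dot A^n_t=\lambda_n(X_t,Y(A^n_t))$ with $A^n_0=a$ admits a unique $K$-Lipschitz solution on $[0,\infty)$: Peano's theorem provides existence, while the monotone dependence of the right-hand side on the state variable yields uniqueness via the standard $(A^n_1-A^n_2)^2$ estimate. A comparison argument shows $A^{n+1}\le A^n$: at a prospective first crossing time $t_0$ with $A^{n+1}_{t_0}=A^n_{t_0}$, the monotonicity in $a$ combined with $\lambda_{n+1}\le\lambda_n$ gives
\[
(A^{n+1}-A^n)'(t_0)\le\lambda_{n+1}(X_{t_0},Y(A^n_{t_0}))-\lambda_n(X_{t_0},Y(A^n_{t_0}))\le 0,
\]
preventing any sign change. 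Define $A_t:=\lim_{n\to\infty}A^n_t$, which is $K$-Lipschitz in $t$ (hence continuous), non-decreasing, and $(\cF_t)$-adapted.

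The crucial step is passing to the limit in the integral equation. Since $A$ is continuous non-decreasing, $\varphi_s:=\alpha(Y(A_s))$ is continuous non-increasing, in particular \cadlag and of finite variation; by Lemma~\ref{lem:equal_null}, $\int_0^t\ind{X_s=\varphi_s}\,ds=0$ $\P_x$-a.s. At any $s$ with $X_s\ne\varphi_s$, continuity of $\alpha$ and $Y$ together with $A^n_s\downarrow A_s$ imply that, for $n$ sufficiently large, $\rho_n(X_s-\alpha(Y(A^n_s)))$ equals $1$ when $X_s<\varphi_s$ and $0$ when $X_s>\varphi_s$; in both cases $\lambda_n(X_s,Y(A^n_s))\to\lambda(X_s,Y(A_s))$. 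Dominated convergence with the uniform bound $K$ then delivers \eqref{eqn:int_A}. The main obstacle is precisely the discontinuity of $\lambda$ along $\{x=\alpha(y)\}$: on this set $\lambda_n(X_s,Y(A^n_s))$ may take any value in $[0,l(X_s,Y(A_s))]$ depending on the rate of convergence of $A^n_s$, and Lemma~\ref{lem:equal_null} is the essential ingredient that confines this bad set to a Lebesgue-null subset of $[0,\infty)$.

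Finally, the strong Markov property of $(X_t,A_t)$ follows from that of $X$ together with the pathwise uniqueness of $A$ given $X$ established above. For any $(\cF_t)$-stopping time $\tau$, the post-$\tau$ process $(X_{\tau+\cdot})$ is, conditional on $\cF_\tau$, a copy of the diffusion starting from $X_\tau$ that is independent of $\cF_\tau$; applying the same approximation construction to this shifted trajectory with initial datum $A_\tau$ produces $A_{\tau+\cdot}$, so the conditional law of $(X_{\tau+\cdot},A_{\tau+\cdot})$ given $\cF_\tau$ depends only on $(X_\tau,A_\tau)$, which is the strong Markov property.
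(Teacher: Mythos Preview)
Your construction and limit passage are correct and follow the same strategy as the paper: approximate $\lambda$ from above by continuous functions, solve the resulting random ODEs, take the monotone decreasing limit, and use Lemma~\ref{lem:equal_null} to dispose of the discontinuity set $\{X_s=\alpha(Y(A_s))\}$. Your uniqueness argument for each $A^n$ via the monotonicity of $a\mapsto\lambda_n(x,Y(a))$ is a nice variant; the paper instead builds $\lambda^\ve$ so that it is Lipschitz in $y$.

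There is, however, a gap in your strong Markov argument. You write that it follows from ``the pathwise uniqueness of $A$ given $X$ established above'', but you only proved uniqueness for the approximations $A^n$, not for the limit $A$ as a Carath\'eodory solution of \eqref{eqn:int_A}; indeed the paper remarks (Remark~\ref{rem:carath}) that Carath\'eodory solutions need not be unique, and $A$ is merely the maximal one. What you actually need is the flow property of the \emph{construction}: that applying the approximation scheme to the shifted path $X_{\tau+\cdot}$ with initial datum $A_\tau$ reproduces $A_{\tau+\cdot}$. This is not automatic, because the restarted approximation $\bar A^n$ starts from $A_\tau$, whereas $A^n_{\tau+\cdot}$ starts from $A^n_\tau>A_\tau$; one must show the two limits coincide. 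The paper handles this explicitly by introducing an auxiliary family $\bar A^{\ve,\delta}$ with perturbed initial data and using continuous dependence on the initial condition. In your setting the fix is available: your monotonicity argument in fact gives the $1$-Lipschitz estimate $0\le A^n_t(a)-A^n_t(a')\le a-a'$ for $a\ge a'$, whence $|A^n_{\tau+t}-\bar A^n_t|\le A^n_\tau-A_\tau\to 0$, and the flow property follows. You should state this explicitly rather than invoking a uniqueness for $A$ that you have not established.
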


\edt{ The solution of \eqref{eqn:int_A} in the above proposition
is constructed as a limit of solutions $(A_{t}^{\ve})$ of ODEs with
the right-hand side $\lambda^{\ve}$. Functions $\lambda^{\ve}$ converge
from above to $\lambda$ and are Lipschitz continuous, so that $(A_{t}^{\ve})$
are uniquely determined for each $\omega$. The monotone limit $A^{0}=\lim_{\ve\downarrow0}A^{\ve}$
is shown to satisfy \eqref{eqn:int_A} with Lemma \ref{lem:equal_null}
playing an important role.

The process $(A_{t}^{0})_{t\ge0}$ is a Carath\'{e}odory solution to
\eqref{eqn:ODE_A}: it is a continuous process such that for
almost every $\omega$ and almost every $t$ the equality \eqref{eqn:ODE_A}
holds. The concept of Carath\'{e}odory solutions was introduced in the
theory of ODEs to make sense of equations with discontinuous right-hand
side, i.e., equations whose solutions cannot be continuously differentiable
functions. Here, we extended the notion to random ODEs by studying
the equation pathwise, for each $\omega$ (and the resulting trajectory
$t\mapsto X_{t}(\omega)$) separately. The reason that the equality
in \eqref{eqn:ODE_A} is $\P$-a.s. is due to the use of Lemma \ref{lem:equal_null}.
We finish with the following important remark.

\begin{remark} \label{rem:carath} There may be many Carath\'{e}odory
solutions to \eqref{eqn:int_A} but it follows from the above discussion
that $(A_{t}^{0})_{t\ge0}$ is the largest of them. \end{remark}
}

\edt{It should be noted, however, that this extremal property of the solution
does not play any role in our considerations below. It may well be
that there is a unique Carath\'{e}odory solution but the theory of such
equations with discontinuities driven by infinite variation process
$(X_{t})$ is not well developed and beyond the scope of this paper.

Notice that when the CDF $F$ of $\theta$ is differentiable then
$Y_{t}=Y(A_{t})$, where $(A_{t})$ satisfies \eqref{eqn:int_A},
is a Carath\'{e}odory solution to 
\begin{equation}
dY_{t}=-\frac{F(Y_{t})}{F'(Y_{t})}\lambda(X_{t},Y_{t})dt,\quad Y_{0}=Y(A_{0}).\label{eqn:dynamics_Y}
\end{equation}
As the mapping $Y$ is decreasing, it is the smallest Carath\'{e}odory
solution. }

We will indicate an initial point $(X_{0},A_{0})=(x,a)$ of the Markov
process $(X_{t},A_{t})_{t\ge0}$ either as a subscript in $\P_{xa}$
or in the process itself $X_{t}^{x},A_{t}^{x,a}$ and mix the notations
\edt{depending on which is beneficial for the clarity of exposition.} Since $(X_{t})_{t\ge0}$
is a strong solution of \eqref{eqn:X} the process $(X_{t},A_{t})_{t\ge0}$
can be considered as a family of processes on the original probability
space $(\Omega,\cF,\P)$ as well as a Markov family.

\edt{The final auxiliary result concerns the dependence of the process
$(A_{t})$ on the initial point $a$ and the initial state $x$ of $(X_{t})$.
The Lipschitz continuity with respect to $a$ will be instrumental
in proving the continuity of the value function to the best response
optimal stopping problem, a key result to prove the equilibrium property
of the postulated strategies.}

\begin{lemma}\label{lem:prop_A} For any $x\in\cI$, the mapping
$a\mapsto A_{t}^{x,a}=:A_{t}^{a}$ is increasing and for $a,a'\ge0$
\[
|A_{t}^{a}-A_{t}^{a'}|\le|a-a'|,\qquad t\ge0.
\]
For any $a\ge0$, the mapping $x\mapsto A_{t}^{x,a}$ is decreasing
and sequentially continuous in the following sense: for every $T>0$,
$x_{0}\in\cI$ and $x_{n}\to x_{0}$, there is a $\P$-negligible
subset of $\Omega$ outside of which $A_{t}^{x_{n},a}$ converges
to $A_{t}^{x_{0},a}$ in the supremum norm $\|Z\|_{T}=\sup_{t\in[0,T]}|Z_{t}|$.
\end{lemma}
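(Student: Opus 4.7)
The plan is to establish all three properties by working with the Lipschitz upper approximations $A^{\varepsilon, x, a}$ constructed in Proposition \ref{prop:solution_A} and passing to the monotone limit $\varepsilon \downarrow 0$. The approximations $\lambda^\varepsilon$ can be chosen to preserve the monotonicities of $\lambda$ from Lemma \ref{lem:monotonicity_lambda}, namely $\lambda^\varepsilon(\cdot, y)$ decreasing in $x$ and $\lambda^\varepsilon(x, Y(\cdot))$ decreasing in $a$---this is automatic, for instance, if the indicator $\ind{x \le \alpha(y)}$ is smoothed by a Lipschitz majorant of $\ind{\cdot \ge 0}$ applied to $\alpha(y) - x$.

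For monotonicity and Lipschitz dependence in $a$, uniqueness of solutions to ODEs with Lipschitz right-hand side implies that trajectories cannot cross, so $a \le a'$ yields $A_t^{\varepsilon, a} \le A_t^{\varepsilon, a'}$. The integral representation
\[
A_t^{\varepsilon, a'} - A_t^{\varepsilon, a} = (a' - a) + \int_0^t \bigl[\lambda^\varepsilon(X_s, Y(A_s^{\varepsilon, a'})) - \lambda^\varepsilon(X_s, Y(A_s^{\varepsilon, a}))\bigr] ds,
\]
combined with the monotonicity of $\lambda^\varepsilon$ in $a$, makes the integrand non-positive, hence $A_t^{\varepsilon, a'} - A_t^{\varepsilon, a} \le a' - a$; both properties pass to the monotone limit $\varepsilon \downarrow 0$. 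For monotonicity in $x$, pathwise comparison for \eqref{eqn:X} driven by the common Brownian motion gives $X_t^x \le X_t^{x'}$ $\P$-a.s.\ when $x \le x'$. A standard Gronwall-type comparison applied to $\max(A_t^{\varepsilon, x', a} - A_t^{\varepsilon, x, a}, 0)$, using that $\lambda^\varepsilon(\cdot, y)$ is decreasing in $x$, yields $A_t^{\varepsilon, x', a} \le A_t^{\varepsilon, x, a}$, which is preserved in the limit.

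The main task is sequential continuity in $x$. By the monotonicity just established, a sandwich argument reduces the problem to monotone sequences: for arbitrary $x_n \to x_0$, set $\bar x_k = \sup_{n \ge k} x_n$ and $\underline x_k = \inf_{n \ge k} x_n$ so that $A_t^{\bar x_k, a} \le A_t^{x_n, a} \le A_t^{\underline x_k, a}$ for $n \ge k$. Consider $x_n \uparrow x_0$ (the case $x_n \downarrow x_0$ is analogous). Then $A_t^{x_n, a} \ge A_t^{x_0, a}$ decreases in $n$, and
\[
0 \le A_t^{x_n, a} - A_t^{x_0, a} \le \int_0^t \bigl[\lambda(X_s^{x_n}, Y(A_s^{x_0, a})) - \lambda(X_s^{x_0}, Y(A_s^{x_0, a}))\bigr] ds,
\]
where the bound uses monotonicity of $\lambda$ in $a$ together with $A_s^{x_n, a} \ge A_s^{x_0, a}$ to replace $Y(A_s^{x_n, a})$ with $Y(A_s^{x_0, a})$. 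Now $\lambda(\cdot, y)$ is continuous in $x$ except at $x = \alpha(y)$, and applying Lemma \ref{lem:equal_null} to the continuous, monotone process $\varphi_s = \alpha(Y(A_s^{x_0, a}))$, the set $\{s : X_s^{x_0} = \varphi_s\}$ has zero Lebesgue measure $\P$-a.s. Combined with the uniform convergence $X^{x_n} \to X^{x_0}$ on $[0, T]$ from \eqref{eqn:cont_X_mono} and uniform boundedness of $\lambda$ (Lemma \ref{lem:lambda}), dominated convergence yields pointwise convergence $A_t^{x_n, a} \to A_t^{x_0, a}$. Dini's theorem then upgrades this monotone pointwise convergence to a continuous limit into uniform convergence on $[0, T]$.

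The main obstacle is the discontinuity of $\lambda$ in $x$ across $x = \alpha(y)$, which blocks standard continuous-dependence arguments and, given the potential non-uniqueness of Carath\'eodory solutions (Remark \ref{rem:carath}), leaves open whether the limit of $A^{x_n, a}$ at $x_0$ coincides with $A^{x_0, a}$. The resolution is the integral estimate above, which exploits monotonicity in $a$ to freeze the $Y$-argument as $Y(A^{x_0, a})$, reducing the problem to continuity of $\lambda$ in $x$ at this fixed argument; Lemma \ref{lem:equal_null} then controls the exceptional set on which the discontinuity is encountered.
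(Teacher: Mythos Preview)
Your argument is correct and follows essentially the same route as the paper: monotonicity and the Lipschitz bound in $a$ via the integral representation and the fact that $a\mapsto\lambda(x,Y(a))$ is decreasing; monotonicity in $x$ via SDE comparison combined with the $x$-monotonicity of $\lambda$; and sequential continuity in $x$ by freezing the second argument of $\lambda$ at $Y(A^{x_0,a}_s)$, invoking Lemma~\ref{lem:equal_null} to discard the set $\{s:X^{x_0}_s=\alpha(Y(A^{x_0,a}_s))\}$, and upgrading monotone pointwise convergence to uniform via Dini. The only stylistic difference is that where you bound $A_t^{x_n,a}-A_t^{x_0,a}$ directly by an integral that tends to zero under dominated convergence, the paper instead applies Fatou's lemma to $\limsup_n A_t^{x_n,a}$ (for $x_n\uparrow x_0$) and $\liminf_n A_t^{x_n,a}$ (for $x_n\downarrow x_0$) before identifying the limit with $A_t^{x_0,a}$; the key ingredients and the role of Lemma~\ref{lem:equal_null} are identical.
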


\begin{remark}\label{rem:cont_X} It can be deduced from the above
lemma and its proof that the mapping $(x,a)\mapsto A_{t}^{x,a}$ is
sequentially continuous at each point $(\bar{x},\bar{a})\in\cI\times[0,\infty)$
outside of a $\P$-negligible set which depends on $(\bar{x},\bar{a})$
and the sequence. Indeed, the continuity in $a$ is uniform over $x$,
while by applying Lemma \ref{lem:prop_A} for $T=1,2,3,\ldots$, we
obtain the sequential continuity at $\bar{x}$ for any $t\ge0$. The
aforementioned negligible set arises because of the occupation measure
formula applied to a semimartingale $Z_{t}=X_{t}^{\bar{x}}-\alpha(Y(A_{t}^{\bar{x},\bar{a}}))$
which itself depends on $(\bar{x},\bar{a})$, and because of the convergence
of the process $(X_{t}^{x})$ which depends on the sequence $x_{n}\to\bar{x}$.
\end{remark}

\subsection{Best-response value function}\label{subsec:best_response}

Motivated by Corollary \ref{corr:equil}, we assume now that Player
2 follows the strategy $\tau_{2}^{*}$ given by \eqref{eqn:tau_form}
with $(A_{t})$ stated in \eqref{eqn:Y_def}. We will study the best
response of Player 1 when $\theta_{1}=\theta\in[\theta_{L},\theta_{U}]$,
i.e., the optimal stopping problem 
\[
\sup_{\sigma\in\cT(\cF_{t})}J(x,\sigma,\tau_{2}^{*};\theta).
\]
This problem does not have a structure of a Markovian optimal stopping
problem, but thanks to the strong Markov property of $(X_{t},A_{t})$
and the expression \eqref{eqn:J_int} for $J(\cdot)$, we \edt{will
solve it by considering} the following Markovian optimal stopping
problem on the extended state space $(X_{t},A_{t})$: 
\begin{equation}
v(x,a;\theta)=\sup_{\sigma\in\cT(\cF_{t})}\E_{xa}\Big[\int_{0}^{\sigma}e^{-rs-A_{s}}D(X_{s})ds+\theta e^{-r\sigma-A_{\sigma}}+\int_{0}^{\sigma}e^{-rs-A_{s}}m(X_{s})\lambda(X_{s},Y(A_{s}))ds\Big].\label{eqn:def_v}
\end{equation}
Notice that $(A_{t})$ is absolutely continuous, so $A_{t-}=A_{t}$.
By \eqref{eqn:ODE_A} and Lemma \ref{lem:J_int}, we have 
\[
v(x,0;\theta)=\sup_{\sigma\in\cT(\cF_{t})}J(x,\sigma,\tau_{2}^{*};\theta).
\]

We can rewrite the middle term of \eqref{eqn:def_v} in an integral
form 
\begin{equation}
\theta e^{-r\sigma-A_{\sigma}}=\theta e^{-A_{0}}-\int_{0}^{\sigma}e^{-rs-A_{s}}\theta\big(r+\lambda(X_{s},Y(A_{s}))\big)ds.\label{eqn:integral_middle}
\end{equation}
Letting 
\begin{equation}
\tilde{v}(x,a;\theta)=\sup_{\sigma\in\cT(\cF_{t})}\E_{xa}\Big[\int_{0}^{\sigma}e^{-rs-A_{s}}\big(D(X_{s})-r\theta+\lambda(X_{s},Y(A_{s}))(m(X_{s})-\theta)\big)ds\Big]\label{eqn:tilde_v}
\end{equation}
we have $v(x,a;\theta)=\tilde{v}(x,a;\theta)+\theta e^{-a}$. This
equivalent formulation of the optimal stopping problem will be used
often in the paper and it underlies the arguments of the following
proposition.


\begin{proposition}\label{prop:continuity} The value function $v(x,a;\theta)$
is continuous in $(x,a)\in\cI\times[0,\infty)$. Furthermore, an optimal
stopping time for $v(x,a;\theta)$ is $\sigma^{*}=\inf\{t\ge0:\ (X_{t},A_{t})\in\cS_{\tilde{v}}^{\theta}\}$,
where 
\[
\cS_{\tilde{v}}^{\theta}=\{(x,a)\in\cI\times[0,\infty]:\ \tilde{v}(x,a;\theta)=0\}
\]
is a closed set. \end{proposition}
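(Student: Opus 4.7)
The plan is to first establish continuity of $\tilde v(\cdot,\cdot;\theta)$, from which continuity of $v=\tilde v+\theta e^{-a}$ and closedness of $\cS_{\tilde v}^\theta$ follow immediately. The optimality of $\sigma^*$ is then a consequence of the classical Snell-envelope characterisation for Markovian optimal stopping on the Markov family $(X_t,A_t)$ furnished by Proposition \ref{prop:solution_A}. Writing the running reward in \eqref{eqn:tilde_v} as $g(x,a;\theta):=D(x)-r\theta+\lambda(x,Y(a))(m(x)-\theta)$, the boundedness of $D$ and $m$, combined with Lemma \ref{lem:lambda} and $m(x)-y\ge m_{\min}-\theta_U>0$, gives $\|g\|_\infty<\infty$. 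Since $e^{-rs-A_s}\le e^{-rs}$, the contribution of $[T,\infty)$ to any integral is bounded by $\|g\|_\infty e^{-rT}/r$ uniformly in $(x,a)$ and $\sigma$, so attention may be restricted to stopping times $\sigma\le T$ up to an $O(e^{-rT})$ error.

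Fix $(x,a)\in\cI\times[0,\infty)$ and $(x_n,a_n)\to(x,a)$. By Lemma \ref{lem:prop_A} and Remark \ref{rem:cont_X}, on a $\P$-full set the trajectories $s\mapsto X_s^{x_n}$ and $s\mapsto A_s^{x_n,a_n}$ converge uniformly on $[0,T]$ to their counterparts at $(x,a)$. The mapping $\lambda(\cdot,Y(\cdot))$ is continuous off the curve $\{x=\alpha(Y(a))\}$, and applying Lemma \ref{lem:equal_null} to the continuous process $\varphi_s=\alpha(Y(A_s^{x,a}))$ (monotone, hence of finite variation, by monotonicity of $\alpha$, $Y$ and $A^{x,a}$), the random set $\{s\in[0,T]:X_s^x=\varphi_s\}$ has zero Lebesgue measure $\P$-a.s. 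For $s$ outside this exceptional set, $\lambda(X_s^{x_n},Y(A_s^{x_n,a_n}))\to\lambda(X_s^x,Y(A_s^{x,a}))$ by continuity of $l$ in Lemma \ref{lem:lambda} and of $\alpha,Y$. Dominated convergence applied along trajectories then delivers
\[
\E\int_0^T\bigl|e^{-rs-A_s^{x_n,a_n}}g(X_s^{x_n},A_s^{x_n,a_n};\theta)-e^{-rs-A_s^{x,a}}g(X_s^x,A_s^{x,a};\theta)\bigr|ds\longrightarrow 0,
\]
so that $I_n(\sigma):=\int_0^\sigma e^{-rs-A_s^{x_n,a_n}}g(X_s^{x_n},A_s^{x_n,a_n};\theta)ds$ converges in $L^1(\P)$ to its analogue $I(\sigma)$ at $(x,a)$, uniformly in $(\cF_t)$-stopping times $\sigma\le T$.

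The uniform-in-$\sigma$ convergence gives both semicontinuities. Lower semicontinuity of $\tilde v$ at $(x,a)$ follows by evaluating an $\varepsilon$-optimal truncated stopping time for $(x,a)$ at $(x_n,a_n)$. For upper semicontinuity we use the crucial fact that $(X^{x},A^{x,a})$ for all initial conditions are defined pathwise on the \emph{same} probability space (strong solutions of \eqref{eqn:X} together with Proposition \ref{prop:solution_A}), so any $(\cF_t)$-stopping time is admissible for every starting point. Picking an $\varepsilon$-optimal $\sigma_n\le T$ for $(x_n,a_n)$, the admissibility bound $\E I(\sigma_n)\le\tilde v(x,a;\theta)$ combined with $|\E I_n(\sigma_n)-\E I(\sigma_n)|\to 0$ yields $\limsup_n\tilde v(x_n,a_n;\theta)\le\tilde v(x,a;\theta)+\varepsilon+O(e^{-rT})$. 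Letting $\varepsilon\to0$ and $T\to\infty$ concludes continuity.

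With continuity in hand, $\cS_{\tilde v}^\theta$ is closed and $\sigma^*$ is an $(\cF_t)$-stopping time. Its optimality is standard: the process $M_t:=e^{-rt-A_t}\tilde v(X_t,A_t;\theta)+\int_0^t e^{-rs-A_s}g(X_s,A_s;\theta)ds$ is a $\P_{xa}$-supermartingale by the dynamic programming principle and a martingale on $[0,\sigma^*]$; combined with $\tilde v(X_{\sigma^*},A_{\sigma^*};\theta)=0$ on $\{\sigma^*<\infty\}$ and the vanishing of $e^{-rt-A_t}\tilde v(X_t,A_t;\theta)$ as $t\to\infty$ on $\{\sigma^*=\infty\}$ (by boundedness of $\tilde v$), optional sampling gives $\tilde v(x,a;\theta)=\E_{xa}I(\sigma^*)$. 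The principal technical obstacle is the upper semicontinuity, where the discontinuity of $\lambda$ and the dependence of $\varepsilon$-optimal stopping times on initial conditions combine; both are resolved by Lemma \ref{lem:equal_null} and the pathwise construction of $(A_t)$, respectively.
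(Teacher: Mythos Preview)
Your argument is correct and actually more direct than the paper's. Both proofs share the same skeleton---truncate at horizon $T$ to obtain a uniform $O(e^{-rT})$ tail, then control the difference of integrands along converging initial data using Lemma~\ref{lem:prop_A}/Remark~\ref{rem:cont_X}---but they diverge on how to handle the discontinuity of $\lambda$ along the curve $\{x=\alpha(Y(a))\}$. The paper sandwiches $\lambda$ between continuous approximants $\lambda_\ve\le\lambda\le\lambda^\ve$, proves continuity of the corresponding value functions $\tilde v_\ve,\tilde v^\ve$ (where the integrand is continuous, so dominated convergence is unproblematic), and then shows $\tilde v_\ve\uparrow\tilde v$ and $\tilde v^\ve\downarrow\tilde v$ to obtain lower and upper semicontinuity respectively. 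You instead observe that, by Lemma~\ref{lem:equal_null} applied to the monotone finite-variation process $\varphi_s=\alpha(Y(A_s^{x,a}))$, the limiting trajectory spends Lebesgue-null time on the discontinuity curve, so pointwise (in $s$) convergence of the integrand holds a.e.\ and dominated convergence applies directly to the original $\lambda$. This collapses the paper's two-layer argument into one step, and since the resulting $L^1$ bound $\E\int_0^T|\cdots|ds\to 0$ is uniform over $\sigma\le T$, continuity of $\tilde v$ follows immediately (your separate lower/upper semicontinuity paragraphs are in fact redundant once you have that uniform bound). The paper's route is slightly more modular---it cleanly separates the analytic smoothing from the probabilistic convergence---but yours is shorter and relies on exactly the same ingredients (Lemmas~\ref{lem:equal_null} and~\ref{lem:prop_A}) that the paper has already established.
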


\edt{The main finding of the above proposition is the continuity of
the value function $v$, or, equivalently, the continuity of $\tilde{v}$.
The form of an optimal stopping time follows then from the standard
theory.

The continuity of $\tilde{v}$ in $(x,a)$ does not follow from standard
results because the functional is not continuous due to the discontinuity
of $\lambda$ and the process $(X_{t},A_{t})$ has not been shown to be Feller continuous. Instead, we approximate the value function $\tilde{v}$
from above and from below by value functions $\tilde{v}^{\ve}$, $\tilde{v}_{\ve}$
corresponding to optimal stopping problems with $\lambda$ in the
functional \eqref{eqn:tilde_v} (but not in the dynamics of $(A_{t})$)
replaced by continuous $\lambda^{\ve}$ from above (as in Proposition \ref{prop:solution_A})
and by continuous $\lambda_{\ve}$ from below (constructed analogously
as $\lambda^{\ve})$. We prove directly the continuity of $\tilde{v}^{\ve}$
and $\tilde{v}_{\ve}$ using Lemma \ref{lem:prop_A} and Eq.~\eqref{eqn:cont_X_mono}.
Thanks to the monotonicity of $\tilde{v}^{\ve}$ in $\ve$, which
follows immediately from the monotonicity of $\lambda^{\ve}$ in $\ve$
and the fact that $m>\theta_{U}$, the function $\tilde{v}^{0}=\liminf_{\ve\downarrow0}\tilde{v}^{\ve}$
is upper semi-continuous. Similarly, $\tilde{v}_{0}=\limsup_{\ve\downarrow0}\tilde{v}_{\ve}$
is lower semi-continuous. The proof is concluded by showing that $\tilde{v}=\tilde{v}^{0}=\tilde{v}_{0}$.}

\begin{remark} It would be tempting to apply a smoothing technique
with $\lambda^{\ve}$ and $\lambda_{\ve}$ to prove the continuity
of $(x,a)\mapsto A_{t}^{x,a}$. However, the use of $\lambda^{\ve}$
leads to the largest Carath\'{e}odory solution to \eqref{eqn:ODE_A},
while, by analogy, we expect $\lambda_{\ve}$ to yield the smallest
Carath\'{e}odory solution. This approach would, therefore, require proving
the uniqueness of Carath\'{e}odory solutions which is known to not be
true in general. \end{remark}

\subsection{Best response stopping set}

By Proposition \ref{prop:continuity}, the mapping $(x,a)\mapsto v(x,a;\theta)$
is continuous (as is $\tilde v(x,a;\theta)=v(x,a;\theta)-\theta e^{-a}$)
and the stopping sets for $\tilde{v}$ and $v$ coincide: $\cS_{\tilde{v}}^{\theta}=\cS_{v}^{\theta}$.
We denote by $\cC_{\tilde{v}}^{\theta}$ the continuation set, i.e.,
$\cC_{\tilde{v}}^{\theta}=\cI\times[0,\infty)\setminus\cS_{\tilde{v}}^{\theta}$.
We also have $u(x;\theta)=\tilde{u}(x;\theta)+\theta$, where \edt{$u$ is defined in \eqref{eq:U*} and}
\begin{equation}
\tilde{u}(x;\theta)=\sup_{\sigma\in\cT(\cF_{t})}\E_{x}\Big[\int_{0}^{\sigma}e^{-rs}\big(D(X_{s})-r\theta\big)ds\Big].\label{eqn:tl_u}
\end{equation}
Recall that the smallest optimal stopping time for $u(x;\theta)$
and $\tilde{u}(x;\theta)$ is 
\[
\eta=\inf\{t\ge0:\ \tilde{u}(X_{t};\theta)=0\}=\inf\{t\ge0:\ X_{t}\in\cS_{\tilde{u}}\},
\]
where $\cS_{\tilde{u}}=\{x\in\cI:\ x\le\alpha(\theta)\}$ is the stopping
set and its complement $\cC_{\tilde{u}}=\cI\setminus\cS$ is the continuation
set.

We will turn our attention to the study of the stopping and continuation
sets for $\tilde{v}$. \edt{In a sequence of 3 lemmas, we will establish
the regions of the state space which are subsets of the stopping or
continuation sets for $\tilde{v}$. Each lemma uses different mathematical
tools, which guided the split of the material. For the clarity of
the presentation, we outline here the steps: 
\begin{itemize}
\item $\{(x,a)\in\cI\times[0,\infty):x>\alpha(\theta)\}\subset\cC_{\tilde{v}}^{\theta}$
(Lemma \ref{lem:v_ge_0}); 
\item $\{(x,a)\in\cI\times[0,\infty):x<\alpha(Y(a)),\ a<A(\theta)\}\subset\cC_{\tilde{v}}^{\theta}$
(Lemma \ref{lem:C_v}); 
\item $\{(x,a)\in\cI\times[0,\infty):x\le\alpha(Y(a)),\ a\ge A(\theta)\}\subset\cS_{\tilde{v}}^{\theta}$
(Lemma \ref{lem:S_v}). 
\end{itemize}
Corollary \ref{cor:S_v} combines these properties into a complete
characterisation of the stopping set $S_{\tilde{v}}^{\theta}$. }

\begin{lemma}\label{lem:v_ge_0} $\tilde{v}(x,a;\theta)>0$ for $(x,a)\in\cI\times[0,\infty)$
such that $x>\alpha(\theta)$. \end{lemma}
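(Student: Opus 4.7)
My plan is to exhibit, for each $(x, a)$ with $x > \alpha(\theta)$, an explicit stopping time $\sigma$ making the expression under the supremum in \eqref{eqn:tilde_v} strictly positive. The case split is based on whether $Y(a) \le \theta$ or $Y(a) > \theta$. A key self-consistency observation threads through the argument: on any time interval where $X_s > \alpha(Y(a))$, one has $\lambda(X_s, Y(A_s)) = 0$ (since $A_s \ge a$ gives $\alpha(Y(A_s)) \le \alpha(Y(a))$ by Lemma \ref{lem:monotonicity_lambda}), so $A_s$ stays equal to its initial value $a$ on such an interval.

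When $Y(a) \le \theta$, I take $\sigma = \tau_\theta^* := \inf\{t: X_t \le \alpha(\theta)\}$. Since $\alpha(Y(a)) \le \alpha(\theta) < x$, the self-consistency observation yields $A_s \equiv a$ on $[0, \tau_\theta^*)$ and reduces the integrand to $e^{-rs - a}(D(X_s) - r\theta)$, so $\tilde v(x, a;\theta) \ge e^{-a}\tilde u(x;\theta) > 0$ by Lemma \ref{lemm:conti-alpha}(i). When $Y(a) > \theta$ and $x > \alpha(Y(a))$, the analogous choice $\sigma = \tau_0 := \inf\{t: X_t \le \alpha(Y(a))\}$ together with the decomposition $D - r\theta = (D - rY(a)) + r(Y(a) - \theta)$ gives
\[
\tilde v(x, a;\theta) \ge e^{-a}\Bigl[\tilde u(x; Y(a)) + r(Y(a)-\theta)\,\E_x\!\int_0^{\tau_0}e^{-rs}\,ds\Bigr] > 0.
\]
In the remaining sub-case $Y(a) > \theta$ with $\alpha(\theta) < x < \alpha(Y(a))$ strict, the integrand at the initial point simplifies algebraically to $(Y(a)-\theta)(rm(x)-D(x))/(m(x)-Y(a))$, strictly positive since $m(x) > \theta_U$ by Assumption \ref{ass:functional} and $D(x) \le rY(a)$ by Lemma \ref{lemm:conti-alpha}(iii). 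Choosing $\sigma = \inf\{t: X_t > x + \ve\} \wedge h$ with $\ve, h$ small, and using the uniform upper bound $\lambda \le K := r\theta_U/(m_{\min}-\theta_U)$ from Lemma \ref{lem:lambda} (so $A_s \le a + Ks$) together with the continuity of $\alpha$ and $Y$, the integrand stays bounded below by a positive constant on $[0, \sigma)$, giving $\tilde v(x, a;\theta) > 0$.

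The hard part will be the boundary case $x = \alpha(Y(a))$ with $Y(a) > \theta$, where the lower bounds from the two adjacent cases both vanish as $x \to \alpha(Y(a))$. I would close it via the Dynamic Programming Principle applied at $\rho = \inf\{t: X_t \le \alpha(Y(a)) - \ve\} \wedge h$: on the event $\{\rho \le h\}$, which has positive $\P_{xa}$-probability by the nondegeneracy of $(X_t)$, the restart state $(X_\rho, A_\rho)$ lies in the strict-interior sub-case above (since $A_\rho \le a + Kh$ keeps $\alpha(Y(A_\rho))$ above $X_\rho = \alpha(Y(a)) - \ve$ for $h$ small), and $\tilde v(X_\rho, A_\rho;\theta)$ is uniformly bounded below there by a positive constant; meanwhile, $\ve$ and $h$ can be chosen small enough that any possibly-negative contribution of the integral over $[0, \rho]$ is dominated by this lower bound.
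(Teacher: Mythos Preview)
Your Cases 1--3 are correct and give a clean, hands-on argument covering all of $\{x > \alpha(\theta)\}$ except the single curve $\{x = \alpha(Y(a)),\ Y(a) > \theta\}$. This route is genuinely different from the paper's: you work locally with the sign of the integrand and explicit stopping times, whereas the paper uses one global comparison (described below).

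Case~4, however, has a real gap. Writing $\tau_\ve = \inf\{t:\ X_t \le \alpha(Y(a)) - \ve\}$ and $\rho = \tau_\ve \wedge h$, the DPP gives at best
\[
\tilde v(x,a;\theta) \;\ge\; -\,C\,h \;+\; e^{-rh}\,\delta_0(\ve)\,\P_x(\tau_\ve < h),
\]
with $\delta_0(\ve) > 0$ the Case-3 lower bound at the restart point. For fixed $\ve$, the exit-time estimate in the appendix (see \eqref{eqn:decay}) gives $\P_x(\tau_\ve < h) \le C' e^{-c/h}$ as $h \downarrow 0$, which is $o(h)$: the positive term cannot beat the $O(h)$ negative one. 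Letting $\ve = \ve(h) \to 0$ does not help either, because your Case-3 stopping time must satisfy $\ve' < \ve - \big(\alpha(Y(a)) - \alpha(Y(a+Kh))\big)$, forcing $\ve' \to 0$ and the lower bound $\delta_0(\ve) \sim c_1\,\E[\sigma'] = O((\ve')^2)$ to degenerate at the same rate. The balance does not close without a substantially finer estimate than ``$\ve$ and $h$ small enough''.

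The paper avoids the boundary case entirely via a single inequality valid for all $(x,a)$. One reinterprets the $\lambda$-term as coming from an opponent with random type $\theta_a \sim F_{|Y(a)}$ and exit time $\tau_a$, and compares: for $\sigma = \inf\{t:\ X_t \le \alpha(\theta)\}$, on $\{\sigma > \tau_a\}$ the tail $\int_{\tau_a}^{\sigma} e^{-rs} D(X_s)\,ds + e^{-r\sigma}\theta$ is dominated by $\int_{\tau_a}^{\sigma} e^{-rs} M(X_s)\,ds + e^{-r\sigma} m(X_\sigma)$ (since $M \ge D$ and $m > \theta_U$), yielding $u(x;\theta) \le J(x,\sigma,\tau_a;\theta)$. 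Integrating out $\theta_a$ as in Lemma~\ref{lem:J_int} and rewriting gives $\tilde u(x;\theta) \le e^{a}\,\tilde v(x,a;\theta)$ for every $(x,a)$, and the conclusion follows from $\tilde u(x;\theta) > 0$ for $x > \alpha(\theta)$. No case split, no boundary issue.
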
 
\begin{proof}
Fix $x>\alpha(\theta)$ and $a\ge0$. Consider a process 
\[
\begin{cases}
dA_{t}=\lambda(X_{t},Y(A_{t}))dt,\\
A_{0-}=A_{0}=a.
\end{cases}
\]
Define $F_{|Y(a)}(y)=\frac{F(y\wedge Y(a))}{F(Y(a))}$ which is the
cumulative distribution function $F$ conditioned on the outcome being
smaller than $Y(a)$. Let $\theta_{a}=F_{|Y(a)}^{-1}(F(\theta_{2}))\sim F_{|Y(a)}$,
where we recall that $\theta_{2}$ is the exit value (type) of player
$2$. We choose $\theta_{a}$ in this way so that we can integrate
it out as we did for $\theta_{i}$ using arguments as in Lemma \ref{lem:J_int}
with a different cumulative distribution function.

Let $\bar{A}_{t}=A_{t}-a$ and $\tau_{a}=\inf\{t\ge0:F_{|Y(a)}^{-1}(e^{-\bar{A}_{t}})<\theta_{a}\}$.
Taking $\sigma=\inf\{t\ge0:\ X_{t}\le\alpha(\theta)\}$, we obtain
\begin{align*}
u(x;\theta) & =\E_{x}\bigg[\int_{0}^{\sigma}e^{-rs}D(X_{s})ds+e^{-r\sigma}\theta\bigg]\\
 & =\tl\E_{x}\bigg[\int_{0}^{\sigma\wedge\tau_{a}}e^{-rs}D(X_{s})ds+\ind{\sigma\le\tau_{a}}e^{-r\sigma}\theta+\ind{\sigma>\tau_{a}}\Big(\int_{\sigma}^{\tau_{a}}e^{-rs}D(X_{s})ds+e^{-r\sigma}\theta\Big)\bigg]\\
 & \le\tl\E_{x}\bigg[\int_{0}^{\sigma\wedge\tau_{a}}e^{-rs}D(X_{s})ds+\ind{\sigma\le\tau_{a}}e^{-r\sigma}\theta+\ind{\sigma>\tau_{a}}\Big(\int_{\sigma}^{\tau_{a}}e^{-rs}M(X_{s})ds+e^{-r\sigma}m(X_{\sigma})\Big)\bigg]\\
 & =\tl\E_{x}\bigg[\int_{0}^{\sigma\wedge\tau_{a}}e^{-rs}D(X_{s})ds+\ind{\sigma\le\tau_{a}}e^{-r\sigma}\theta+\ind{\sigma>\tau_{a}}e^{-r\tau_{a}}m(X_{\tau_{a}})\bigg]=J(x,\sigma,\tau_{a};\theta),
\end{align*}
where in the inequality we used that $M(x)\ge D(x)$ and $m(x)>\theta_{U}$.
Notice also that we integrate over the extended probability space
$(\tl\Omega,\tl\cF,\tl\P)$ since $\tau_{a}$ depends on $\theta_{2}$
via $\theta_{a}$. We apply analogous arguments as in Lemma \ref{lem:J_int}
and rewrite the middle term as in \eqref{eqn:integral_middle} to
get 
\[
u(x;\theta)\le\E_{x}\bigg[\int_{0}^{\sigma}e^{-rs-\bar{A}_{s}}\big(D(X_{s})-r\theta+\lambda(X_{s},Y(A_{s}))(m(X_{s})-\theta)\big)ds\bigg]+\theta.
\]
Taking the supremum over $\sigma\in\cT(\cF_{t})$ and subtracting
$\theta$ from both sides, we see that 
\begin{align*}
\tilde{u}(x;\theta) & \le\sup_{\sigma\in\cT(\cF_{t})}\E_{x}\bigg[\int_{0}^{\sigma}e^{-rs-\bar{A}_{s}}\big(D(X_{s})-r\theta+\lambda(X_{s},Y(A_{s}))(m(X_{s})-\theta)\big)ds\bigg]\\
 & =e^{a}\sup_{\sigma\in\cT(\cF_{t})}\E_{x}\bigg[\int_{0}^{\sigma}e^{-rs-A_{s}}\big(D(X_{s})-r\theta+\lambda(X_{s},Y(A_{s}))(m(X_{s})-\theta)\big)ds\bigg]=e^{a}\tilde{v}(x,a;\theta),
\end{align*}
where in the first equality we used the relationship between $A_{t}$
and $\bar{A}_{t}$, and the last one is by the definition of $\tilde{v}$.
Since $x>\alpha(\theta)$, it is in the continuation region $\cC_{\tilde{u}}$
of $\tilde{u}$, so $\tilde{u}(x;\theta)>0$. Hence, $\tilde{v}(x,a;\theta)>0$. 
\end{proof}
\begin{lemma}\label{lem:C_v} $\tilde{v}(x,a;\theta)>0$ for $(x,a)\in\cI\times[0,\infty)$
such that $x<\alpha(Y(a))$ and $a<A(\theta)$. \end{lemma}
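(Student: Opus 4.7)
The strategy is to exhibit a small stopping time $\sigma>0$ which renders the expectation in the definition of $\tilde v(x,a;\theta)$ strictly positive. The key preliminary observation is that the integrand at $(x,a)$ itself is strictly positive. Indeed, since $a<A(\theta)$ and $Y$ is strictly decreasing, $Y(a)>\theta$. Since also $x<\alpha(Y(a))\le\alpha(\theta_U)$, Lemma~\ref{lem:lambda} applies and the strict monotonicity of $y\mapsto l(x,y)$ yields
\[
\lambda(x,Y(a))\,(m(x)-\theta)=l(x,Y(a))(m(x)-\theta)>l(x,\theta)(m(x)-\theta)=r\theta-D(x),
\]
so
\[
D(x)-r\theta+\lambda(x,Y(a))(m(x)-\theta)>0.
\]

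Next, I would localise. By continuity of $\alpha\circ Y$ (Lemma~\ref{lemm:conti-alpha}) together with the strict inequalities $x<\alpha(Y(a))$ and $a<A(\theta)$, one can pick $\varepsilon>0$ with $a+\varepsilon<A(\theta)$ and
\[
x+\varepsilon<\alpha(Y(a+\varepsilon)).
\]
Fix any $\delta>0$ and define the stopping time
\[
\sigma=\delta\wedge\inf\{t\ge 0:\,|X_t-x|\ge\varepsilon\text{ or }A_t-a\ge\varepsilon\}.
\]
Since $(X_t)$ and $(A_t)$ are continuous (Proposition~\ref{prop:solution_A}) and start at $(x,a)$, we have $\sigma>0$, $\P_{xa}$-a.s. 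On the stochastic interval $[0,\sigma]$, using that $(A_t)$ is increasing and that $\alpha\circ Y$ is decreasing (since $Y$ is decreasing and $\alpha$ is increasing), one obtains
\[
X_t\le x+\varepsilon<\alpha(Y(a+\varepsilon))\le\alpha(Y(A_t)),\qquad A_t\le a+\varepsilon<A(\theta),
\]
so $\lambda(X_t,Y(A_t))=l(X_t,Y(A_t))$ and $Y(A_t)>\theta$. On the compact box $\{(y,z):x-\varepsilon\le y\le x+\varepsilon,\ a\le z\le a+\varepsilon\}$ the map $(y,z)\mapsto D(y)-r\theta+l(y,Y(z))(m(y)-\theta)$ is continuous and, by the first paragraph and a further shrinking of $\varepsilon$ if necessary, bounded below by some constant $c>0$.

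Combining the uniform lower bound on the integrand with the trivial bound $e^{-rs-A_s}\ge e^{-r\delta-a-\varepsilon}$ on $[0,\sigma]$, we get
\[
\tilde v(x,a;\theta)\ge\E_{xa}\Big[\int_0^{\sigma}e^{-rs-A_s}c\,ds\Big]\ge c\,e^{-r\delta-a-\varepsilon}\,\E_{xa}[\sigma]>0,
\]
because $\sigma>0$ a.s. and $\sigma\le\delta$. This proves $\tilde v(x,a;\theta)>0$. The only non-trivial point in the argument is the choice of $\varepsilon$ guaranteeing $x+\varepsilon<\alpha(Y(a+\varepsilon))$, which ensures that the trajectory stays in the open region $\{X_t<\alpha(Y(A_t))\}$ where $\lambda$ coincides with the continuous function $l$; everything else follows by continuity and the strict positivity computed above.
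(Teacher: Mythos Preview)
Your proof is correct and follows essentially the same approach as the paper's: both first verify that the integrand
\[
D(x)-r\theta+\lambda(x,Y(a))(m(x)-\theta)
\]
is strictly positive whenever $x<\alpha(Y(a))$ and $Y(a)>\theta$ (via the strict monotonicity of $y\mapsto l(x,y)$ from Lemma~\ref{lem:lambda}), and then localise to a strictly positive stopping time on which these open conditions persist by continuity of $(X_t,A_t)$. The paper does the localisation in one line using the first-exit time $\eta=\inf\{t\ge0:\ X_t\ge\alpha(Y(A_t))\text{ and }A_t<A(\theta)\}$, whereas you set up an explicit $\varepsilon$-box and extract a uniform lower bound $c>0$ on the integrand there; the difference is purely cosmetic. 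Incidentally, with your choice of $\varepsilon$ satisfying $x+\varepsilon<\alpha(Y(a+\varepsilon))$ and $a+\varepsilon<A(\theta)$, the strict positivity of the integrand already holds on the \emph{closed} box (both inequalities $y<\alpha(Y(z))$ and $Y(z)>\theta$ are preserved up to the boundary), so no further shrinking of $\varepsilon$ is actually needed.
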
 
\begin{proof}
For any $(x,a)\in\cI\times[0,\infty)$ satisfying $x<\alpha(Y(a))$
and $a<A(\theta)$, we have 
\begin{align*}
D(x)-r\theta+\lambda(x,Y(a))(m(x)-\theta) & =D(x)-r\theta+\frac{rY(a)-D(x)}{m(x)-Y(a)}(m(x)-\theta)\\
 & >D(x)-r\theta+\frac{r\theta-D(x)}{m(x)-\theta}(m(x)-\theta)=0,
\end{align*}
where the inequality uses $Y(a)>\theta$ and Lemma \ref{lem:lambda}.

The stopping time $\eta:=\inf\{t\ge0:\ X_{t}\ge\alpha(Y(A_{t}))\text{ and }A_{t}<A(\theta)\}$
is $\P_{xa}$-a.s. strictly positive by the continuity of $(X_{t},A_{t})$.
On the interval $[0,\eta)$ the above estimate applies. Hence 
\[
\tilde{v}(x,a;\theta)\ge\E_{xa}\bigg[\int_{0}^{\eta}e^{-rs-A_{s}}\big(D(X_{s})-r\theta+\lambda(X_{s},Y(A_{s}))(m(X_{s})-\theta)\big)ds\bigg]>0.
\]
\end{proof}
\begin{lemma}\label{lem:S_v} We have $\cS_{\tilde{v}}^{\theta}\supseteq\{(x,a)\in\cI\times[0,\infty):\ x\le\alpha(\theta)\text{ and }a\ge A(\theta)\}$.
\end{lemma}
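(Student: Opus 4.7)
The aim is to show $\tilde v(x,a;\theta) = 0$ whenever $x \le \alpha(\theta)$ and $a \ge A(\theta)$. Since $\sigma = 0$ is admissible and gives value $0$, the bound $\tilde v \ge 0$ is automatic, so the plan is to prove $\tilde v \le 0$ by constructing a supermartingale with initial value $0$. The test function will be the single-player value $\tilde u(\cdot;\theta)$ from \eqref{eqn:tl_u}, a natural choice because its HJB structure is exactly what is needed to absorb the running reward inside $\tilde v$.

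First I would record the two structural consequences of $a \ge A(\theta)$. Monotonicity of $(A_t)$ gives $A_t \ge A(\theta)$ and hence $Y(A_t) \le \theta$ for all $t \ge 0$; because $\alpha$ is increasing, $\alpha(Y(A_t)) \le \alpha(\theta)$, and therefore $\lambda(X_t, Y(A_t)) = 0$ on $\{X_t > \alpha(\theta)\}$. Second, I would verify that the integrand
\[
g(x,a) := D(x) - r\theta + \lambda\bigl(x,Y(a)\bigr)\bigl(m(x) - \theta\bigr)
\]
is non-positive on the claimed stopping region. On the sub-region $\{x > \alpha(Y(a))\}$, $\lambda = 0$ and $g = D(x) - r\theta \le 0$ since $x \le \alpha(\theta) \le c(\theta)$ by Lemma~\ref{lemm:conti-alpha}(iii). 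On $\{x \le \alpha(Y(a))\}$, the same algebraic identity used in the proof of Lemma~\ref{lem:C_v} gives
\[
g(x,a) = \frac{(\theta - Y(a))\bigl(D(x) - r\,m(x)\bigr)}{m(x) - Y(a)},
\]
which is $\le 0$ because $\theta \ge Y(a)$, and because $x \le \alpha(Y(a)) \le \alpha(\theta) \le c(\theta)$ forces $D(x) \le r\theta \le r\theta_U < r\,m(x)$ via $m > \theta_U$ (Assumption~\ref{ass:functional}).

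The core step is the supermartingale construction. The function $\tilde u(\cdot;\theta)$ vanishes on $\{x \le \alpha(\theta)\}$, is bounded on $\cI$, belongs to $C^1(\cI)$ by smooth fit at $\alpha(\theta)$ (the first-order condition $a_\theta'(\alpha(\theta)) = 0$ gives $\tilde u'(\alpha(\theta)^+) = 0 = \tilde u'(\alpha(\theta)^-)$), and solves $(\cL_X - r)\tilde u + (D - r\theta) = 0$ on $\{x > \alpha(\theta)\}$. Applying the generalised Ito formula to $e^{-rt - A_t}\tilde u(X_t;\theta)$ — admissible because $\tilde u$ is $C^1$ and piecewise $C^2$ with no jump of the derivative at $\alpha(\theta)$, and $(A_t)$ is continuous of finite variation — I introduce
\[
\hat M_t := e^{-rt - A_t}\tilde u(X_t;\theta) + \int_0^t e^{-rs - A_s}\, g(X_s, A_s)\, ds,
\]
whose drift equals $e^{-rt - A_t}\bigl[\cL_X \tilde u - r\tilde u - \lambda \tilde u + g\bigr]$. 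On $\{X_t < \alpha(\theta)\}$, $\tilde u = 0$ reduces this to $g \le 0$; on $\{X_t > \alpha(\theta)\}$ the HJB identity $(\cL_X - r)\tilde u = r\theta - D$ combined with $\lambda = 0$ gives drift $= (r\theta - D) + (D - r\theta) = 0$. Hence $(\hat M_t)$ is a $\P_{xa}$-supermartingale.

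Finally, $\hat M_0 = \tilde u(x;\theta) = 0$ since $x \le \alpha(\theta)$, so optional stopping yields $0 \ge \E_{xa}[\hat M_{\sigma \wedge n}]$ for every $\sigma \in \cT(\cF_t)$ and $n \in \N$. Passing $n \to \infty$ using boundedness of $\tilde u$ and the integrability bound $\int_0^\infty e^{-A_s}\, dA_s \le 1$ (which dominates the $\lambda(m-\theta)$ contribution to $g$), and then using $\tilde u \ge 0$, produces
\[
\E_{xa}\!\int_0^\sigma e^{-rs - A_s}\, g(X_s, A_s)\, ds \le -\E_{xa}\bigl[e^{-r\sigma - A_\sigma}\tilde u(X_\sigma;\theta)\bigr] \le 0.
\]
Taking the supremum over $\sigma$ delivers $\tilde v(x,a;\theta) \le 0$, so $(x,a) \in \cS_{\tilde v}^{\theta}$. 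The main technical point I expect to require care is the invocation of the generalised Ito formula at $\alpha(\theta)$, where $\tilde u$ is merely $C^1$, together with the uniform integrability argument needed to accommodate unbounded $\sigma$.
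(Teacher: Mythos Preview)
Your proof is correct and follows essentially the same approach as the paper: both use the single-player value $\tilde u(\cdot;\theta)$ as a comparison function, apply It\^o--Tanaka to $e^{-rt-A_t}\tilde u(X_t;\theta)$, and exploit the two key facts that (i) $\lambda(X_t,Y(A_t))=0$ whenever $X_t>\alpha(\theta)$ (since $A_t\ge A(\theta)$ forces $\alpha(Y(A_t))\le\alpha(\theta)$) and (ii) the running reward $g$ is non-positive on the claimed region. The only cosmetic difference is that the paper packages the conclusion via the excessive-function characterisation of $e^a\tilde v$ as the smallest non-negative function with the supermartingale property \eqref{eqn:supermart_v}, whereas you go directly through optional stopping on the supermartingale $\hat M_t$; the underlying computations are identical.
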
 
\begin{proof}
Consider an optimal stopping problem $\tilde{v}(x,a;\theta)$ on $(x,a)\in\cI\times[A(\theta),\infty)=:\cO$.
Then $\varphi(x,a)=e^{a}\tilde{v}(x,a;\theta)$ is the smallest non-negative
function satisfying for $(x,a)\in\cO$ the supermartingale property
(the justification of this fact is relegated to the end of the proof):
\begin{multline}
\E_{xa}\bigg[e^{-rt-\int_{0}^{t}\lambda(X_{u},Y(A_{u}))du}\varphi(X_{t},A_{t})\\
+\int_{0}^{t}e^{-rs-\int_{0}^{s}\lambda(X_{u},Y(A_{u}))du}\big(D(X_{s})-r\theta+\lambda(X_{s},Y(A_{s}))(m(X_{s})-\theta)\big)ds\bigg]\le\varphi(x,a).\label{eqn:supermart_v}
\end{multline}
We will show that \eqref{eqn:supermart_v} is satisfied by $\varphi(x,a)=\tilde{u}(x;\theta)$,
from which we immediately conclude that $\tilde{u}(x;\theta)\ge e^{a}\tilde{v}(x,a;\theta)$
for $(x,a)\in\cO$ as $\tilde{u}$ is non-negative. Since $\tilde{u}(x;\theta)=0$
for $x\le\alpha(\theta)$, the stopping region for $\tilde{v}$ must
contain $\cO\cap\big((x_L,\alpha(\theta)]\times[0,\infty)\big)$
which is the statement of the proposition.

Since $\tilde{u}(\cdot;\theta)$ is $C^{1}(\cI)$ and the second derivative
lies in $L_{loc}^{\infty}$ (see \eqref{eq:U*} and classical results
on the smoothness of the value function on the continuation set),
It\^{o}-Tanaka formula \cite[Ch.~VI, Thm.~1.5]{revuzyor} yields 
\[
e^{-rt}\tilde{u}(X_{t};\theta)=\tilde{u}(x;\theta)+\int_{0}^{t}e^{-rs}(\cL_{X}-r)\tilde{u}(X_{s};\theta)ds+\int_{0}^{t}e^{-rs}dM_{s},
\]
where $(M_{t})_{t\ge0}$ is a square integrable martingale and $\cL_{X}$
is the infinitesimal generator of $(X_{t})_{t\ge0}$. Using the supermartingale
property of the process $t\mapsto\int_{0}^{t}e^{-rs}(D(X_{s})-r\theta)ds+e^{-rt}\tilde{u}(X_{t};\theta)$,
we obtain 
\begin{equation}
\cL_{X}\tilde{u}-r\tilde{u}+D-r\theta\le0,\qquad\text{for \ensuremath{x\in\cI\setminus\{\alpha(\theta)\}}}.\label{eqn:cl_X_ineq}
\end{equation}
Let $\bar{A}_{t}=A_{t}-a$. We apply the product rule 
\[
e^{-rt-\bar{A}_{t}}\tilde{u}(X_{t};\theta)=\tilde{u}(x;\theta)+\int_{0}^{t}e^{-rs-\bar{A}_{s}}(\cL_{X}-r)\tilde{u}(X_{s};\theta)ds-\int_{0}^{t}e^{-rs-\bar{A}_{s}}\tilde{u}(X_{s};\theta)d\bar{A}_{s}+\int_{0}^{t}e^{-rs-\bar{A}_{s}}dM_{s}
\]
and take expectations on both sides to arrive at 
\begin{equation}
\E_{xa}\Big[e^{-rt-\bar{A}_{t}}\tilde{u}(X_{t};\theta)+\int_{0}^{t}e^{-rs-\bar{A}_{s}}\big(-(\cL_{X}-r)+\lambda(X_{s},Y(A_{s}))\big)\tilde{u}(X_{s};\theta)ds\Big]=\tilde{u}(x;\theta).\label{eqn:subma}
\end{equation}
Recall that $\tilde{u}(x';\theta)=0$ for $x'\le\alpha(\theta)$ and that
$\alpha(\theta)\ge\alpha(Y(A_{s}))$ since $A_{s}\ge A(\theta)$.
Hence 
\begin{align*}
 & \int_{0}^{t}e^{-rs-\bar{A}_{s}}\big(-(\cL_{X}-r)+\lambda(X_{s},Y(A_{s}))\big)\tilde{u}(X_{s};\theta)ds\\
 & =\int_{0}^{t}\ind{X_{s}>\alpha(Y(A_{s}))}e^{-rs-\bar{A}_{s}}\Big(-(\cL_{X}-r)\tilde{u}(X_{s};\theta)+\lambda(X_{s},Y(A_{s}))\tilde{u}(X_{s};\theta)\Big)ds\\
 & \ge\int_{0}^{t}\ind{X_{s}>\alpha(Y(A_{s}))}e^{-rs-\bar{A}_{s}}\Big(D(X_{s})-r\theta+\lambda(X_{s},Y(A_{s}))\tilde{u}(X_{s};\theta)\Big)ds\\
 & =\int_{0}^{t}\ind{X_{s}>\alpha(Y(A_{s}))}e^{-rs-\bar{A}_{s}}\Big(D(X_{s})-r\theta+\lambda(X_{s},Y(A_{s}))(m(X_{s})-\theta)\Big)ds,
\end{align*}
where the last equality uses that $\lambda(x',Y(a'))=0$ for $x'>\alpha(Y(a'))$.
On $x'\le\alpha(Y(a'))$ we have 
\[
D(x')-r\theta+\lambda(x',Y(a'))(m(x')-\theta)\le D(x')-r\theta+\lambda(x',\theta)(m(x')-\theta)=0,
\]
where the inequality is by Lemma \ref{lem:lambda} and $Y(a')\le\theta$.
Therefore, 
\[
0\ge\int_{0}^{t}\ind{X_{s}\le\alpha(Y(A_{s}))}e^{-rs-\bar{A}_{s}}\big(D(X_{s})-r\theta+\lambda(X_{s},Y(A_{s}))\tilde{u}(X_{s};\theta)\big)ds.
\]
Inserting the above two estimates into \eqref{eqn:subma} we get 
\[
\E_{xa}\bigg[e^{-rt-\bar{A}_{t}}\tilde{u}(X_{s};\theta)+\int_{0}^{t}e^{-rs-\bar{A}_{s}}\big(D(X_{s})-r\theta+\lambda(X_{s},Y(A_{s}))\tilde{u}(X_{s};\theta)\big)ds\bigg]\le\tilde{u}(x;\theta),
\]
which completes the proof that $\varphi(x,a)=\tilde{u}(x;\theta)$
satisfies \eqref{eqn:supermart_v}.

\textit{Derivation of \eqref{eqn:supermart_v}:} Define 
\[
F(x,a)=\E_{xa}\bigg[\int_{0}^{\infty}e^{-rs-A_{s}}\big(D(X_{s})-r\theta+\lambda(X_{s},Y(A_{s}))(m(X_{s})-\theta)\big)ds\bigg].
\]
Similar arguments as in the proof of Proposition \ref{prop:continuity}
show that $F$ is continuous and bounded. Furthermore, by the strong
Markov property of $(X_{t},Y_{t})$, we have 
\[
\tilde{v}(x,a;\theta)=\sup_{\sigma\in\cT(\cF_{t})}\E_{xa}\Big[F(x,a)-e^{-r\sigma}F(X_{\sigma},A_{\sigma})\Big].
\]
Recall that a function $\psi$ is called $r$-excessive if $\psi(x,a)\ge\E_{xa}\big[e^{-rt}\psi(X_{t},A_{t})\big]$, $t\ge0$.
Define $\eta(x,a)=\sup_{\sigma\in\cT(\cF_{t})}\E_{xa}\big[-e^{-r\sigma}F(X_{\sigma},A_{\sigma})\big]$. By \cite[Sec.~3.3, Thm.~1]{shiryaev2007optimal}, $\eta$ is the smallest
$r$-excessive function dominating $-F$. Using that $\tl v(x,a;\theta)=\eta(x,a)+F(x,a)$,
we obtain that $\tl v$ is the smallest non-negative function satisfying,
for $t\ge0$, 
\[
\tilde{v}(x,a;\theta)\ge\E_{xa}\Big[e^{-rt}\tl v(X_{t},A_{t};\theta)+\int_{0}^{t}e^{-rs-A_{s}}\big(D(X_{s})-r\theta+\lambda(X_{s},Y(A_{s}))(m(X_{s})-\theta)\big)ds\Big].
\]
Inserting $\tl v(x,a;\theta)=e^{-a}\varphi(x,a)$ above and noticing
that $A_{s}-a=\int_{0}^{s}\lambda(X_{u},Y(A_{u}))du$ yields \eqref{eqn:supermart_v}. 
\end{proof}
Having established properties of the continuation and stopping sets
for $\tilde{v}$ and a given $\theta$, we combine them into a complete
characterisation of the stopping set.

\begin{corollary}\label{cor:S_v} Stopping region for $\tilde{v}$
(and $v$) is 
\[
\cS_{\tilde{v}}^{\theta}=\{(x,a)\in\cI\times[0,\infty):\ x\le\alpha(\theta)\text{ and }a\ge A(\theta)\}.
\]
\end{corollary}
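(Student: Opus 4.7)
My plan is to prove the corollary by a short case analysis combining the three preceding lemmas. The inclusion $\supseteq$ is exactly the content of Lemma~\ref{lem:S_v}, so all that remains is to establish the reverse inclusion, i.e., that every point $(x,a)\in\cI\times[0,\infty)$ which fails $x\le\alpha(\theta)$ or fails $a\ge A(\theta)$ lies in the continuation set $\cC_{\tilde{v}}^{\theta}$ (equivalently, $\tilde v(x,a;\theta)>0$).

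First, if $x>\alpha(\theta)$, then $(x,a)\in\cC_{\tilde v}^\theta$ by Lemma~\ref{lem:v_ge_0}, regardless of the value of $a$. This disposes of one half of the complement.

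Second, suppose $a<A(\theta)$ and $x\le\alpha(\theta)$ (the case $x>\alpha(\theta)$ is already handled). Since $Y=A^{-1}$ is strictly decreasing (Assumption~\ref{ass:theta_cdf} guarantees $F$ is strictly increasing and continuous on $[\theta_L,\theta_U]$), $a<A(\theta)$ translates into $Y(a)>\theta$. By Assumption~\ref{assump:Li}(iii), $\alpha$ is strictly increasing, hence $\alpha(Y(a))>\alpha(\theta)\ge x$, i.e.\ $x<\alpha(Y(a))$. The two hypotheses of Lemma~\ref{lem:C_v}, namely $x<\alpha(Y(a))$ and $a<A(\theta)$, are therefore both satisfied, and we conclude $\tilde v(x,a;\theta)>0$.

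Combining the two cases, the complement of $\{(x,a):x\le\alpha(\theta)\text{ and }a\ge A(\theta)\}$ is contained in $\cC_{\tilde v}^\theta$, which gives $\cS_{\tilde v}^\theta\subseteq\{(x,a):x\le\alpha(\theta)\text{ and }a\ge A(\theta)\}$. Together with the inclusion from Lemma~\ref{lem:S_v}, this yields the asserted equality. Finally, since $v(x,a;\theta)=\tilde v(x,a;\theta)+\theta e^{-a}$ and $\theta e^{-a}$ depends only on $a$, the stopping sets coincide, $\cS_v^\theta=\cS_{\tilde v}^\theta$, as noted at the start of Section~\ref{subsec:best_response}. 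The only subtle point is the borderline $x=\alpha(\theta)$ with $a<A(\theta)$, but this is immediately absorbed by the strict inequality $\alpha(Y(a))>\alpha(\theta)$ produced by the strict monotonicity of $\alpha$; beyond this, no further obstacle arises.
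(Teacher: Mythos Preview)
Your proof is correct and follows essentially the same approach as the paper: use Lemma~\ref{lem:S_v} for the inclusion $\supseteq$, and for the reverse inclusion split according to whether $x>\alpha(\theta)$ (handled by Lemma~\ref{lem:v_ge_0}) or $a<A(\theta)$ with $x\le\alpha(\theta)$ (handled by Lemma~\ref{lem:C_v} after observing that $\alpha(Y(a))>\alpha(\theta)\ge x$ via the strict monotonicity of $\alpha$ and $Y$). The paper's presentation differs only cosmetically, phrasing the second case as showing that the entire slice $\cI\times\{a\}$ lies in $\cC_{\tilde v}^\theta$ whenever $a<A(\theta)$.
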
 
\begin{proof}
Denote by $\tilde{\cS}^{\theta}$ the right-hand side of the equality
in the statement of the corollary. From Lemma \ref{lem:S_v}, we have
$\tilde{\cS}^{\theta}\subset\cS_{\tilde{v}}^{\theta}$. The proof
is completed when we show that $(\tilde{\cS}^{\theta})^{c}\subset\cC_{\tilde{v}}$.
Lemma \ref{lem:v_ge_0} implies that 
\begin{equation}
\{(x,a)\in\cI\times[0,\infty):\ x>\alpha(\theta)\}\subset\cC_{\tilde{v}}^{\theta}.\label{eqn:inclusion1}
\end{equation}
Lemma \ref{lem:C_v} shows that 
\begin{equation}
\{(x,a)\in\cI\times[0,\infty):\ x<\alpha(Y(a))\text{ and }a<A(\theta)\}\subset\cC_{\tilde{v}}^{\theta}.\label{eqn:inclusion2}
\end{equation}
Take $a<A(\theta)$. From the first inclusion, $(\alpha(\theta),x_{U})\times\{a\}\subset\cC_{\tilde{v}}^{\theta}$,
where we recall that $\cI=(x_{L},x_{U})$. From the second inclusion,
$(x_{L},\alpha(Y(a)))\times\{a\}\subset\cC_{\tilde{v}}^{\theta}$,
but $a<A(\theta)$ means that $Y(a)>\theta$, so $\alpha(Y(a))>\alpha(\theta)$.
Hence $\cI\times\{a\}\subset\cC_{\tilde{v}}^{\theta}$. This, together
with inclusions \eqref{eqn:inclusion1} and \eqref{eqn:inclusion2},
implies 
\[
\cC_{\tilde{v}}^{\theta}\supseteq\{(x,a)\in\cI\times[0,\infty):\ x>\alpha(\theta)\text{ or }a<A(\theta)\}=(\tilde{\cS}^{\theta})^{c}
\]
and the proof is completed. 
\end{proof}

\subsection{Nash equilibrium}\label{subsec:Nash}

\edt{The description of the stopping set in Corollary \ref{cor:S_v}
is natural in the framework of optimal stopping of two-dimensional
dynamics. The resulting optimal stopping time, however, is not of
the form \eqref{eqn:tau_form}. It turns out that due to the specific
form of the stopping region and of the dynamics of $(X_{t},A_{t})$
this optimal stopping time can be equivalently described as the first
time that $A_{t}$ exceeds $A(\theta)$, hence, in the form of \eqref{eqn:tau_form}
when one recalls that $A_{t}=A(Y_{t})$. Leaving technical complications
aside, this can be seen as follows. Denoting by $\sigma$ the first
entry time of $(X_{t},A_{t})$ to $\cS_{\tilde{v}}^{\theta}$ and
by $\tau$ the first time that $A_{t}>A(\theta)$, it is clear that
$\tau\ge\sigma$. Obviously, if $A_{\sigma}>A(\theta)$ then $\tau=\sigma$.
Assume $A_{\sigma}=A(\theta)$. When $X_{\sigma}<\alpha(\theta)$,
the process $A_{t}$ is strictly increasing at $\sigma$ as $\lambda(X_{\sigma},\theta)>0$,
so again $\tau=\sigma$. A more delicate argument is needed when $X_{\sigma}=\alpha(\theta)$,
but then the regularity of the point $\alpha(\theta)$ for the process
$(X_{t})$ implies that $(X_{t})$ enters the open interval $(x_{L},\alpha(\theta))$
immediately, so a similar argument as before can be used.}

\begin{lemma}\label{lem:S_v_rewritten} For $(x,a)\in\cI\times[0,A(\theta))$,
we have 
\[
\inf\{t\ge0:\ A_{t}>A(\theta)\}=\inf\{t\ge0:\ (X_{t},A_{t})\in\cS_{\tilde{v}}^{\theta}\},\quad\P_{xa}-a.s.
\]
\end{lemma}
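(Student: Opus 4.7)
Let $\tau := \inf\{t \ge 0 : A_t > A(\theta)\}$ and $\sigma := \inf\{t \ge 0 : (X_t,A_t) \in \cS_{\tilde v}^\theta\}$. By Corollary \ref{cor:S_v}, $\cS_{\tilde v}^\theta = \{x \le \alpha(\theta)\} \cap \{a \ge A(\theta)\}$. The plan is to show $\sigma \le \tau$ and $\tau \le \sigma$ separately, exploiting continuity of $(X_t, A_t)$ and monotonicity of $(A_t)$.

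For $\sigma \le \tau$, I would assume $\tau < \infty$ and argue by contradiction that $X_\tau \le \alpha(\theta)$. Continuity of $(A_t)$ gives $A_\tau = A(\theta)$, hence $Y(A_\tau) = \theta$ and $\alpha(Y(A_s)) \le \alpha(\theta)$ for $s \ge \tau$ (since $(A_t)$ is increasing and $Y$ and $\alpha$ are monotone). If $X_\tau > \alpha(\theta)$, continuity of $(X_t)$ yields a neighbourhood $[\tau, \tau+\delta]$ on which $X_s > \alpha(\theta) \ge \alpha(Y(A_s))$, forcing $\lambda(X_s, Y(A_s)) = 0$ and hence $A_{\tau+\delta} = A_\tau = A(\theta)$, contradicting the definition of $\tau$. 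So $X_\tau \le \alpha(\theta)$, giving $(X_\tau, A_\tau) \in \cS_{\tilde v}^\theta$ and $\sigma \le \tau$.

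For the reverse inequality, suppose for contradiction that $\tau > \sigma$. By monotonicity and continuity of $(A_t)$, together with $A_\sigma \ge A(\theta)$, one must have $A_\sigma = A(\theta)$ and $A_s = A(\theta)$ for every $s \in [\sigma, \tau)$. The integral form \eqref{eqn:int_A} then gives $\lambda(X_s, \theta) = 0$ for a.e. $s \in [\sigma, \tau)$, i.e., $X_s > \alpha(\theta)$ or $D(X_s) = r\theta$ on a full-measure subset of $[\sigma, \tau)$. I would now invoke the regularity of $(X_t)$: because the diffusion coefficient $b$ is strictly positive, every point is regular both from above and from below, so $(X_t)$ enters $(x_L, X_\sigma)$ instantly after $\sigma$ (and more generally enters the open interval $(x_L, \alpha(\theta))$ immediately, using $X_\sigma \le \alpha(\theta)$). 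Combining this with Lemma \ref{lem:equal_null} applied to the finite-variation process $\varphi_s \equiv \alpha(\theta)$ excludes the scenario where $X_s \equiv \alpha(\theta)$ on a set of positive measure, and the monotonicity of $D$ together with $\alpha(\theta) \le c(\theta)$ rules out $D(X_s) = r\theta$ on a full-measure subset right after $\sigma$. This yields a set of positive Lebesgue measure in $[\sigma, \sigma+\delta]$ on which $\lambda(X_s, \theta) > 0$, contradicting $A_s \equiv A(\theta)$.

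The main obstacle is the boundary case $X_\sigma = \alpha(\theta)$, where $\lambda(X_\sigma, \theta)$ may vanish and continuity alone does not force an immediate increase of $(A_t)$. Here the argument relies crucially on the regularity of $\alpha(\theta)$ for $(X_t)$ guaranteed by Assumption \ref{ass:coeff}, which places the process strictly below $\alpha(\theta)$ on times of positive measure arbitrarily close to $\sigma$, reducing the situation to the interior case $X_s < \alpha(\theta)$ where $\lambda(X_s, Y(A_s)) > 0$ and hence $A_t$ strictly increases beyond $A(\theta)$, contradicting $\tau > \sigma$.
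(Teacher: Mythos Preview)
Your proof is correct and rests on the same two ingredients as the paper's: $(A_t)$ can only increase when $X_t\le\alpha(Y(A_t))$, and the one-dimensional diffusion is regular at $\alpha(\theta)$. The organization differs slightly. For $\sigma\le\tau$ you show directly that $(X_\tau,A_\tau)\in\cS_{\tilde v}^\theta$ on $\{\tau<\infty\}$, which is arguably more direct than the paper's route of exhibiting, for every $t$ with $A_t>A(\theta)$, some $s\le t$ with $(X_s,A_s)\in\cS_{\tilde v}^\theta$. For $\tau\le\sigma$, the paper makes the strong Markov step explicit---writing $\P_{xa}(\tau>\sigma)=\E_{xa}\big[\P_{X_\sigma,A(\theta)}(\tau>0)\big]$ and then proving $\P_{x',A(\theta)}(\tau>0)=0$ separately for $x'<\alpha(\theta)$ and $x'=\alpha(\theta)$---whereas you argue by contradiction pathwise on $\{\tau>\sigma\}$ and invoke regularity at the random time $\sigma$, which is the same content with strong Markov used implicitly. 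Your detour through Lemma~\ref{lem:equal_null} and the separate case $D(X_s)=r\theta$ is unnecessary: once regularity and continuity give a set of positive Lebesgue measure in $[\sigma,\tau)$ on which $X_s<\alpha(\theta)$, one has $\lambda(X_s,\theta)>0$ there (as the paper also uses), and this already contradicts $A_s\equiv A(\theta)$.
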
 
\begin{proof}
Define $\tau=\inf\{t\ge0:\ A_{t}>A(\theta)\}$ and 
\[
\sigma=\inf\{t\ge0:\ (X_{t},A_{t})\in\cS_{\tilde{v}}^{\theta}\}=\inf\{t\ge0:\ A_{t}\ge A(\theta)\text{ and }X_{t}\le\alpha(\theta)\}.
\]
Recall that $\cS_{\tilde{v}}^{\theta}$ is closed, so $\sigma$ is
a stopping time. Fix $(x,a)$ as in the statement of the lemma. We
will argue omega by omega. Fix $\omega\in\Omega$ and take any $t\ge0$
such that $A_{t}(\omega)>A(\theta)$. By the assumption that $a<A(\theta)$
we have $t>0$. Due to the dynamics of $(A_{t})$ there is $s\le t$
such that $X_{s}(\omega)\le\alpha(\theta)$ and $A_{s}(\omega)>A(\theta)$.
This implies that $s\in\{u\ge0:\ A_{u}(\omega)\ge A(\theta)\text{ and }X_{u}(\omega)\le\alpha(\theta)\}$
and shows that $\tau(\omega)\ge\sigma(\omega)$. From the arbitrariness
of $\omega$ we conclude that $\tau\ge\sigma$.

Since $\tau\ge\sigma$, $\P_{xa}$-a.s., by the strong Markov property
of $(X_{t},A_{t})$ and the continuity of the trajectories we have
$\P_{xa}(\tau>\sigma)=\E_{xa}\big[\P_{X_{\sigma}A_{\sigma}}(\tau>0)\big]$
and $X_{\sigma}\le\alpha(\theta)$, $A_{\sigma}=A(\theta)$. In order
to show that $\P_{xa}(\tau>\sigma)=0$ it suffices to demonstrate that
\begin{equation}
\P_{xa}(\tau>0)=0\quad\text{for}\quad(x,a)\in\cO_{\theta}:=\{(x,a)\in\cI\times[0,\infty):x\le\alpha(\theta),a=A(\theta)\}.\label{eqn:tau_zero}
\end{equation}
Take $(x,a)\in\cO_{\theta}$. If $x<\alpha(\theta)$ then $\eta=\inf\{t\ge0:\ X_{t}>\alpha(Y(A_{t}))\}>0$
$\P_{xa}$-a.s. and $(A_{t})$ is strictly increasing on $[0,\eta)$.
Hence, $\P_{xa}(\tau>0)=0$. Consider now $x=\alpha(\theta)$ and
$a=A(\theta)$. By the non-degeneracy of the diffusion around $x$,
we have that $\eta^{\circ}=\inf\{t\ge0:X_{t}<\alpha(\theta)\}=0$
$\P_{xa}$-a.s. Let $B=\{\tau>0\text{ and }\eta^{\circ}=0\}$. For
$\omega\in B$, $A_{t}(\omega)=A_{0}(\omega)=A(\theta)$ for $t\in[0,\tau(\omega))$.
Since $\eta^{\circ}(\omega)=0$, there is $t(\omega)<\tau(\omega)$
such that $X_{t(\omega)}<\alpha(\theta)$. By the continuity of $(X_{t})$,
the Lebesgue measure of the set $\{s\in[0,t(\omega)]:X_{s}(\omega)<\alpha(\theta)\}$
is greater than zero which contradicts that $\lambda(X_{s}(\omega),A_{s}(\omega))=0$
for $s\in[0,\tau(\omega))$ (recall the dynamics \eqref{eqn:ODE_A}
of $(A_{t})$). This contradiction shows that the set $B$ is empty.
Since $\P_{xa}(\eta^{\circ}=0)=1$, we conclude that $\P_{xa}(\tau>0)=0$. 
\end{proof}
We are now in a position to state the main result of this section.
\begin{theorem}\label{thm:Nash} Let $\tau_{1}$, $\tau_{2}$ be
given by \eqref{eqn:tau_form} with $(A_{t})$ stated in \eqref{eqn:Y_def}.
Then $(\tau_{1},\tau_{2})$ is a Nash equilibrium in the sense of
Definition \ref{def:Nash}. \end{theorem}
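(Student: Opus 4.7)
The plan is to verify the hypothesis of Corollary~\ref{corr:equil}: by symmetry of the model it suffices to show that for each $\theta \in [\theta_L, \theta_U]$ the deterministic stopping time $\hat\tau_1(\cdot, \theta) := \inf\{t \ge 0 : Y_t < \theta\}$ is an optimiser of $\sigma \mapsto J(x, \sigma, \tau_2; \theta)$ over $\cT(\cF_t)$, where $\tau_2 = \inf\{t \ge 0 : Y_t < \theta_2\}$ is the opponent's strategy (still carrying the random type $\theta_2$).

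First I would apply Lemma~\ref{lem:J_int} with the process $(A_t)$ started from $A_0 = 0$. This integrates $\theta_2$ out and re-expresses $J(x, \sigma, \tau_2; \theta)$ as a functional of the Markov pair $(X_t, A_t)$. Rewriting the term $\theta e^{-r\sigma - A_{\sigma}}$ via the identity \eqref{eqn:integral_middle}, one recognises that
\[
\sup_{\sigma \in \cT(\cF_t)} J(x, \sigma, \tau_2; \theta) \;=\; v(x, 0; \theta) \;=\; \tilde v(x, 0; \theta) + \theta,
\]
so the best-response problem coincides with the Markovian optimal stopping problem already analysed in Section~\ref{subsec:best_response}.

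The key synthesis step combines three previously established results. Proposition~\ref{prop:continuity} supplies an optimal stopping time $\sigma^{*} = \inf\{t \ge 0 : (X_t, A_t) \in \cS_{\tilde v}^\theta\}$; Corollary~\ref{cor:S_v} gives the explicit form $\cS_{\tilde v}^\theta = \{(x, a) : x \le \alpha(\theta),\ a \ge A(\theta)\}$; and Lemma~\ref{lem:S_v_rewritten}, valid when $a < A(\theta)$, rewrites $\sigma^{*} = \inf\{t \ge 0 : A_t > A(\theta)\}$. Since $A = Y^{-1}$ is strictly decreasing, $\{A_t > A(\theta)\} = \{Y_t < \theta\}$, so $\sigma^{*} = \hat\tau_1(\cdot, \theta)$. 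For $\theta \in [\theta_L, \theta_U)$ this is immediate, as $A_0 = 0 < A(\theta)$.

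The main residual technicality is the corner case $\theta = \theta_U$, in which $A(\theta_U) = 0$ and Lemma~\ref{lem:S_v_rewritten} does not apply at $a = 0$. I would handle it directly from the dynamics of $(A_t)$: if $X_0 \le \alpha(\theta_U)$ then $\lambda(X_0, \theta_U) > 0$ forces $A$ to increase strictly from $t = 0$, whence $\sigma^* = 0 = \hat\tau_1(\cdot, \theta_U)$; if $X_0 > \alpha(\theta_U)$, then $A$ stays at $0$ until $X$ hits $\alpha(\theta_U)$, and regularity of that point for the non-degenerate diffusion (together with the strict positivity of $\lambda$ on the exit region) forces $A$ to start strictly increasing immediately thereafter, so both $\sigma^*$ and $\hat\tau_1(\cdot, \theta_U)$ coincide with $\inf\{t \ge 0 : X_t \le \alpha(\theta_U)\}$. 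Once this identification is in place for every $\theta$, Corollary~\ref{corr:equil} delivers the Nash property of $(\tau_1, \tau_2)$.
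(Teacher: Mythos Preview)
Your proposal is correct and follows essentially the same route as the paper: apply Corollary~\ref{corr:equil}, use Lemma~\ref{lem:J_int} to identify the best-response problem with $v(x,0;\theta)$, invoke Corollary~\ref{cor:S_v} for the stopping set, and Lemma~\ref{lem:S_v_rewritten} to rewrite the optimal stopping time as $\inf\{t\ge 0: A_t > A(\theta)\} = \inf\{t\ge 0: Y_t < \theta\}$. The paper's proof is exactly this chain and does not treat the corner case $\theta=\theta_U$ separately; your extra care there is harmless but unnecessary, since $F$ is atomless (Assumption~\ref{ass:theta_cdf}) and the inequality in the proof of Corollary~\ref{corr:equil} is obtained by integrating against $dF$, so optimality at the single point $\theta_U$ is irrelevant.
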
 
\begin{proof}
We apply Corollary \ref{corr:equil}. Take $\hat{\tau}_{1}(\cdot,\theta)=\hat{\tau}_{2}(\cdot,\theta)=\inf\{t\ge0:\ Y(A_{t})<\theta\}$.
These are stopping times thanks to Lemma \ref{lem:S_v_rewritten}.
Due to the symmetry of the problem, it is sufficient to show that
for every $\theta\in[\theta_{L},\theta_{U}]$ and $x\in\cI$, the
stopping time $\sigma^{*}=\hat{\tau}_{1}(\cdot,\theta)$ solves the
optimal stopping problem $\sup_{\sigma\in\cT(\cF_{t})}J(x,\sigma,\tau_{2};\theta)$,
where $\tau_{2}=\hat{\tau}_{2}(\cdot,\theta_{2})$ is an $\cT(\cF_{t}^{2})$-stopping
time. Recall that by Lemma \ref{lem:J_int} and \eqref{eqn:def_v}
we have 
\[
v(x,0;\theta)=\sup_{\sigma\in\cT(\cF_{t})}J(x,\sigma,\tau_{2};\theta).
\]
We show in Corollary \ref{cor:S_v} that the optimal stopping time
for $v$ is given by 
\begin{align*}
\sigma^{*} & =\inf\{t\ge0:\ X_{t}\le\alpha(\theta)\text{ and }A_{t}\ge A(\theta)\}\\
 & =\inf\{t\ge0:\ A_{t}>A(\theta)\}\\
 & =\inf\{t\ge0:\ Y(A_{t})<\theta\},
\end{align*}
where the second equality follows from Lemma \ref{lem:S_v_rewritten}
and the last equality is because $Y=A^{-1}$. 
\end{proof}


\subsection{Perfect Bayesian Equilibrium}\label{subsec:PBE}

\edt{
The equilibrium established above is not only a Nash equilibrium, but it also furnishes a {\em perfect Bayesian
equilibrium} via \eqref{eqn:tau_PBE}. A perfect Bayesian equilibrium requires two conditions
\cite{Osborne1994}: sequential rationality and the correct dynamics
(Bayesian updating) of the beliefs of the players. The correct belief
dynamics is automatically taken care of through the dynamics of $Y_{t}$
that unambiguously determines the posterior probability distribution
of $\theta_{1}$ and $\theta_{2}$. The sequential rationality stipulates
that each player's strategy is a best response at any time $t$ with
the knowledge of his own type at any value of $X_{t}$ and any belief
represented by $Y_{t}$. In particular, a perfect Bayesian
equilibrium has to take into account the best response even in case
of deviations. In the context of the strategy profile we defined,
a deviation happens only if a player of type $\theta$ fails to exit
even though $Y_{t}<\theta$. 
If deviation takes place, the dynamics of $(Y_{t})$ is unaltered because
his opponent can never find out if deviation occurred or not.

To formally define a perfect Bayesian equilibrium in our setting with asymmetric information, we adapt the notions of extended strategy and time-consistent extended strategy from Riedel and Steg \cite{Riedel2017}. For the sake of simplicity, we present the framework for equilibria in pure strategies which can be written as hitting times of a measurable set $\Upsilon$ by the process $(X_t, A_t, \theta_i)$, where the last component is the type of the player; the strategy \eqref{eqn:tau_PBE} is of this form. The stopping time employed by the player $i$ of type $\theta_i$, $i=1,2$, for the game starting at time $t \ge 0$ is given by
\[
\tau(t; \theta_i) = \inf\{s \ge t: (X_s, A_s, \theta_i) \in \Upsilon \}.
\]
Notice that such a family of strategies for player $i$ and a fixed value of $\theta_i$ satisfies the conditions of time-consistent extended strategy in \cite[Def.~2.13]{Riedel2017}, i.e., it is an extension of that concept to games with private information.

To this end, for any $t\ge0$,
we define the posterior distribution $F_{|Y(A_{t})}$ of $\theta_{i}$,
$i=1,2$, given the value of the belief process $A_{t}$, as $F_{|y}(z)=F(z\wedge y)/F(y)$.
Therefore, the random variables denoting the remaining types of player
$i$ conditional that the game has not ended by time $t$, are constructed
as 
\begin{equation}
\hat{\theta}_{i}^{t}=F_{|Y(A_{t})}^{-1}(F(\theta_{i}))\sim F_{|Y(A_{t})},\qquad i=1,2.\label{eq:posterior_PBE}
\end{equation}
Define a functional $\hat{J}_{t}$ by 
\begin{equation}
\hat{J}_{t}(x,\tau_{1},\tau_{2};\theta)=\tl\E_{x}\bigg[\int_{t}^{\tau_{1}\wedge\tau_{2}}e^{-r(s-t)}D(X_{s})ds+\ind{\tau_{1}\le\tau_{2}}e^{-r(\tau_{1}-t)}\theta+\ind{\tau_{1}>\tau_{2}}e^{-r(\tau_{2}-t)}m(X_{\tau_{2}})\bigg|\tilde{\cF}_{t}\bigg].\label{eqn:PDE_func_mod}
\end{equation}

\begin{definition} \label{def:PBE2} The symmetric strategy profile $\tau(t;\theta)$ given by $\Upsilon$ and the posterior probability distribution (\ref{eq:posterior_PBE}) constitute a perfect Bayesian equilibrium if 
\begin{align*}
\hat{J}_{t}(x,\sigma,\tau(t;\hat{\theta}_{i}^{t});\theta) & \le\hat{J}_{t}(x,\tau(t;\theta),\tau(t;\hat{\theta}_{i}^{t});\theta), \quad \text{$\P_x$-{a.s.}},
\end{align*}
for any $i = 1,2$, $x \in \cI$, $\theta \in [\theta_L, \theta_U]$, $t \ge 0$, and $\sigma \in \cT(\tilde \cF_t)$, $\sigma \ge t$.
\end{definition}

A formal treatment of the conditional random distribution $F_{|A(Y_t)}$ is beyond the scope of this paper but the functional $\hat J_t$ evaluated at $\tau(t, \hat\theta^t_i)$ can be given the formal meaning:
\begin{multline*}
\hat J_{t}(x,\sigma,\tau(t;\hat\theta^t_{i});\theta)=\E_{x}\bigg[\int_{\theta_L}^{Y(A_t)} \bigg(\int_{t}^{\sigma \wedge \tau(t;\gamma)}e^{-r(s-t)}D(X_{s})ds+\ind{\sigma\le\tau(t;\gamma)}e^{-r(\sigma-t)}\theta\\
+\ind{\sigma>\tau(t;\gamma)}e^{-r(\tau(t;\gamma)-t)}m(X_{\tau(t;\gamma)}) \bigg)\frac{dF(\gamma)}{F(Y(A_t))} \bigg|{\cF}_{t}\bigg].
\end{multline*}
For technical reasons and mathematical convenience, we define
\begin{equation}\label{eqn:PDE_func}
J_t(x, \tau_1, \tau_2; \theta) = \tl\E_{x} \bigg[\ind{\tau_2 > t} \bigg(\int_{t}^{\tau_{1}\wedge\tau_{2}}e^{-r(s-t)}D(X_{s})ds+\ind{\tau_{1}\le\tau_{2}}e^{-r(\tau_{1}-t)}\theta+\ind{\tau_{1}>\tau_{2}}e^{-r(\tau_{2}-t)}m(X_{\tau_{2}})\bigg) \bigg| \tilde \cF_t \bigg]
\end{equation}
and notice that 
\begin{equation}\label{eqn:J_t_equiv}
J_t(x, \sigma, \tau(0; \theta_i); \theta) = e^{-A_t} \hat J_t (x,\sigma,\tau(t;\hat\theta^t_{i});\theta),
\end{equation}
where we used $F(Y(A_t)) = e^{-A_t}$. The symmetry of the condition in Definition \ref{def:PBE2}, the identical distribution of $\theta_i$, and equality \eqref{eqn:J_t_equiv} imply that $\tau(t; \theta)$ furnishes a perfect Bayesian equilibrium if 
\begin{equation}\label{eqn:J_t_cond}
J_t(x, \sigma, \tau(0; \theta_2); \theta) \le J_t(x, \tau(t;\theta), \tau(0; \theta_2); \theta), \qquad \text{$\P_x$-a.s.,}
\end{equation}
for any $x \in \cI$, $\theta \in [\theta_L, \theta_U]$, $t \ge 0$ and $\sigma \in \cT(\tilde \cF_t)$ with $\sigma \ge t$.

Consider now $\Upsilon$ corresponding to the Nash equilibrium \eqref{eqn:tau_PBE}:
\begin{equation}\label{eqn:PBE_Upsilon}
\Upsilon^* = \{ (x,a, \theta) \in \cI \times [0, \infty) \times [\theta_L, \theta_U]:\ x \le \alpha(\theta), a \ge A(\theta) \}.
\end{equation}
In order to verify \eqref{eqn:J_t_cond}, we apply arguments similar as in the proof of Lemma \ref{lem:J_int} to integrate out $\theta_2$ in $J_t$:
\[
J_t(x, \sigma, \tau(0; \theta_2); \theta)
=
\E_x \Big[\int_{t}^{\sigma}e^{-r(s-t)-A_{s}}D(X_{s})ds+\theta e^{-r(\tau-t)-A_{\tau}}+\int_{[t,\tau)}e^{-r(s-t)-A_{s}}m(X_{s})dA_{s}\Big| \cF_t \Big],
\]
where we used that $\sigma \ge t$ and $\sigma \in \cT(\cF_t)$. To show that $\Upsilon^*$ defines a perfect Bayesian equilibrium, it is enough to prove that
for any $t \ge 0$ and $\theta \in [\theta_L, \theta_U]$, the stopping time $\sigma^* = \tau(t; \theta)$ solves the optimal stopping problem
\begin{equation}\label{eqn:PBE_OS}
\esssup_{\sigma \ge t} \E_x \Big[\int_{t}^{\sigma}e^{-r(s-t)-A_{s}}D(X_{s})ds+\theta e^{-r(\sigma-t)-A_{\sigma}}+\int_{[t,\sigma)}e^{-r(s-t)-A_{s}}m(X_{s})dA_{s}\Big| \cF_t \Big].
\end{equation}
Due to the Markov property of $(X_t, A_t)$ and the boundedness and continuity of $D$ and $m$, the classical theory of optimal stopping yields the optimal stopping time of the form $\sigma^* = \inf \{ s \ge t: U(X_s, A_s; \theta) = \theta e^{-A_s} \}$, where
\[
U(x,a;\theta) = \sup_{\sigma \in \cT(\cF_t)} \E_{x a} \Big[\int_{0}^{\sigma}e^{-rs-A_{s}}D(X_{s})ds+\theta e^{-r\sigma-A_{\sigma}}+\int_{[0,\sigma)}e^{-rs-A_{s}}m(X_{s})dA_{s}\Big].
\]
Recalling the form of $A_t$ in \eqref{eqn:ODE_A}, notice that $U(x,a; \theta) = v(x, a; \theta)$, where $v$ is defined in \eqref{eqn:def_v}. Corollary \ref{cor:S_v} implies that the solution of the stopping problem \eqref{eqn:PBE_OS} is indeed given by $\sigma^* = \tau(t; \theta)$. This completes the proof that the strategy profile derived in previous sections gives rise to a perfect Bayesian equilibrium.
\begin{theorem}\label{thm:PBE}
Symmetric strategy profile given by $\Upsilon^*$ is a perfect Bayesian equilibrium in the sense of Definition \ref{def:PBE2}.
\end{theorem}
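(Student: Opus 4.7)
The plan is to organise the verification of Definition \ref{def:PBE2} around the essentially complete argument already laid out in the paragraphs preceding the theorem statement, cleanly separating three reductions.

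First, I would argue that by symmetry of the strategy profile $\Upsilon^{*}$ under swapping players, together with the identical distribution of $\theta_{1}, \theta_{2}$ and the identity \eqref{eqn:J_t_equiv}, it suffices to verify the pathwise inequality \eqref{eqn:J_t_cond} for any $x \in \cI$, $\theta \in [\theta_L, \theta_U]$, $t \ge 0$ and $\sigma \in \cT(\tilde\cF_t)$ with $\sigma \ge t$. Indeed, \eqref{eqn:J_t_equiv} removes the explicit dependence on the conditional distribution $F_{|Y(A_t)}$ of $\hat\theta_i^t$ by replacing it with $\theta_2 \sim F$ at the cost of a deterministic factor $e^{-A_t}$, which is irrelevant for the optimisation.

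Second, I would integrate out $\theta_2$ in the functional $J_t(x,\sigma,\tau(0;\theta_2);\theta)$ using the same device that produced Lemma \ref{lem:J_int}: conditioning on $\sigma(\theta_2)$, applying Fubini, and exploiting the explicit encoding $F(Y_t) = e^{-A_t}$ of the posterior survival function of $\theta_2$ in the belief process. This yields
\[
J_t(x,\sigma,\tau(0;\theta_2);\theta) = \E_x\!\bigg[\int_{t}^{\sigma}\! e^{-r(s-t)-A_s} D(X_s)\,ds + \theta e^{-r(\sigma-t)-A_\sigma} + \int_{[t,\sigma)}\! e^{-r(s-t)-A_s} m(X_s)\,dA_s\,\bigg|\,\cF_t\bigg],
\]
which is the conditional version of the objective maximised in \eqref{eqn:def_v}.

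Third, I would invoke the strong Markov property of $(X_t, A_t)$ established in Proposition \ref{prop:solution_A}, which transfers the conditional essential supremum at time $t$ to the value function $v(X_t, A_t;\theta)$ with initial time $0$. Boundedness and continuity of $D$ and $m$, together with the continuity of $v$ from Proposition \ref{prop:continuity}, ensure that the classical optimal stopping theory applies and that an optimal stopping time is given by $\sigma^{*} = \inf\{s \ge t : (X_s, A_s) \in \cS_v^{\theta}\}$. Corollary \ref{cor:S_v} identifies this stopping region as $\{(x,a) : x \le \alpha(\theta),\ a \ge A(\theta)\}$, so $\sigma^{*}$ is exactly $\tau(t;\theta)$ associated with $\Upsilon^{*}$ in \eqref{eqn:PBE_Upsilon}, which yields \eqref{eqn:J_t_cond} and hence the PBE property.

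The main technical content of this theorem has already been absorbed into Proposition \ref{prop:solution_A}, Proposition \ref{prop:continuity}, and Corollary \ref{cor:S_v}; the only delicate point in the present proof is verifying that the reduction via \eqref{eqn:J_t_equiv} is legitimate pathwise in $\tilde\cF_t$, which follows from the independence of $\theta_1, \theta_2$ from $\cF$ and the $\cF_t$-adaptedness of $(A_t)$. Once that is in place, the proof reduces to citing the earlier results in order.
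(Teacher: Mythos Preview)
Your proposal is correct and follows essentially the same approach as the paper: the paper's proof (contained in the paragraphs immediately preceding the theorem statement) reduces Definition \ref{def:PBE2} to \eqref{eqn:J_t_cond} via \eqref{eqn:J_t_equiv}, integrates out $\theta_2$ by the Lemma \ref{lem:J_int} device to obtain the conditional stopping problem \eqref{eqn:PBE_OS}, and then invokes the Markov property of $(X_t,A_t)$ together with Corollary \ref{cor:S_v} to identify the optimal stopping time with $\tau(t;\theta)$. The only cosmetic difference is that the paper introduces an auxiliary value function $U$ and then observes $U=v$, whereas you work directly with $v$ and its stopping set $\cS_v^\theta$.
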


We emphasise again the difference between the definition of the strategy profile \eqref{eqn:tau_form} that forms a Nash equilibrium and the strategy profile \eqref{eqn:tau_PBE} that corresponds to $\Upsilon^*$ and forms the perfect Bayesian equilibrium. These strategies coincide along the equilibrium path for the game started at time $0$. The former definition is fundamental for the reformulation of the best response problem where the type of the opponent is integrated out (Lemma \ref{lem:J_int}). We further exploit it above where we rewrite the functional $\hat J_t$ as $J_t$ in \eqref{eqn:J_t_equiv} with the opponent following the equilibrium path $\tau(0; \theta_i)$. The reader can further notice the interaction between these definitions in the following remark.

\begin{remark}
The equality \eqref{eqn:J_t_equiv} enables a mathematically equivalent formulation of Definition \ref{def:PBE2}: a symmetric strategy profile $\tau(t; \theta)$ given by $\Upsilon$ is a perfect Bayesian equilibrium if
\[
\{ \tau(0; \theta_i) < t \} = \{ Y(A_t) < \theta_i\}, \quad \text{$\P_x$-{a.s.}}
\]
and
\[
J_t(x, \sigma, \tau(0; \theta_i); \theta) \le J_t(x, \tau(t;\theta), \tau(0; \theta_i); \theta),\quad \text{$\P_x$-{a.s.},}
\]
for any $i = 1,2,$ $x \in \cI$, $\theta \in [\theta_L, \theta_U]$, $t \ge 0$,  and $\sigma \in \cT(\tilde \cF_t)$, $\sigma \ge t$.
\end{remark} 

}


\subsection{Remaining proofs} \label{subsec:proofs} 

\begin{proof}[Proof of Lemma \ref{lem:J_int}]
Since $(A_{t})_{t\ge0}$ is $(\cF_{t})$-adapted, right-continuous
and increasing, the process $(Y_{t})_{t\ge0}$ retains the same adaptivity
and right-continuity but is decreasing. From $A_{0-}=0$ and $F(\theta_{U})=1$,
we obtain $Y_{0-}=\theta_{U}$. We also have $Y_{t}\in(\theta_{L},\theta_{U}]$,
$t\ge0$.

The rest of the proof follows similar arguments as in \cite[Section 4]{DAMP2021}.
For simplicity of notation, we omit the index $i$ in $\tau_{i}$
and $\theta_{i}$. Here, we treat $(Y_{t})_{t\ge0}$ and $(A_{t})_{t\ge0}$
as stochastic processes on $(\tilde{\Omega},\tilde{\cF},\tilde{\P})$
in an obvious way due to the product form of the probability space.
Let $\tilde{\cF}_{\infty}=\bigvee_{t\ge0}\tilde{\cF}_{t}$. We first
note that 
\[
\{t>\tau\}\subseteq\{Y_{t}<\theta\}\subseteq\{t\ge\tau\}.
\]
From the first inclusion, we get that 
\[
\tl\E\big[\ind{\sigma>\tau}\big|\tilde{\cF}_{\infty}\big]=\lim_{\ve\downarrow0}\tl\E\big[\ind{\sigma-\ve>\tau}\big|\tilde{\cF}_{\infty}\big]\le\lim_{\ve\downarrow0}\big(1-F(Y_{\sigma-\ve})\big)=1-F(Y_{\sigma-}),
\]
where we used that $(Y_{t})$ is monotone, so the limits exist. The
second inclusion gives the opposite estimate: 
\[
\tl\E\big[\ind{\sigma>\tau}\big|\tilde{\cF}_{\infty}\big]=\lim_{\ve\downarrow0}\tl\E\big[\ind{\sigma-\ve\ge\tau}\big|\tilde{\cF}_{\infty}\big]\ge\lim_{\ve\downarrow0}\big(1-F(Y_{\sigma-\ve})\big)=1-F(Y_{\sigma-}).
\]
By \eqref{eqn:notation_A}, we conclude that 
\begin{equation}
\tl\E\big[\ind{\sigma>\tau}\big|\tilde{\cF}_{\infty}\big]=1-e^{-A_{\sigma-}}\qquad\text{and}\qquad\tl\E\big[\ind{\sigma\le\tau}\big|\tilde{\cF}_{\infty}\big]=e^{-A_{\sigma-}}.\label{eqn:rep_ind}
\end{equation}

Let $\varphi:[0,\infty]\times\R\to\R$ be a measurable bounded function.
We shall show that 
\begin{equation}
\tl\E_{x}\big[\ind{\sigma>\tau}\,\varphi(\tau,X_{\tau})\big|\tilde{\cF}_{\infty}\big]=\int_{[0,\sigma)}e^{-A_{s}}\varphi(s,X_{s})dA_{s}.\label{eqn:rep_phi}
\end{equation}
Define $\hat{\tau}(u)=\inf\{t\ge0:\ Y_{t}<u\}$ so that $\tau=\hat{\tau}(\theta)$;
notice that this is the decomposition of $\tau$ from Proposition
\ref{prop:structure} in which we suppress in the notation the dependence
on $\omega$. To shorten notation, define an $(\tilde{\cF}_{t})$-adapted
process 
\[
Z_{t}=\varphi(t,X_{t})\ind{\sigma>t},\quad t\ge0.
\]
Using the independence of $\theta$ from $\tilde{\cF}_{\infty}$,
we have 
\[
\tl\E_{x}\big[\ind{\sigma>\tau}\,\varphi(\tau,X_{\tau})\big|\tilde{\cF}_{\infty}\big]=\tl\E_{x}\big[Z_{\tau}\big|\tilde{\cF}_{\infty}\big]=\int_{\theta_{L}}^{\theta_{U}}Z_{\hat{\tau}(v)}dF(v)=\int_{0}^{1}Z_{\hat{\tau}(F^{-1}(u))}du,
\]
where $F^{-1}(\cdot)$ is the inverse of $F$ (which exists by Assumption
\ref{ass:theta_cdf}) and in the last equality we used \cite[Ch.~0, Prop.~4.9]{revuzyor}.
We rewrite $\hat{\tau}(F^{-1}(u))$ as follows: 
\begin{align*}
\hat{\tau}(F^{-1}(u)) & =\inf\{t\ge0:\ Y_{t}<F^{-1}(u)\}=\inf\{t\ge0:\ F(Y_{t})<u\}=\inf\{t\ge0:\ e^{-A_{t}}<u\}\\
 & =\inf\{t\ge0:\ 1-e^{-A_{t}}>1-u\}=:\tilde{\tau}(1-u).
\end{align*}
This allows us to write 
\[
\int_{0}^{1}Z_{\hat{\tau}(F^{-1}(u))}du=\int_{0}^{1}Z_{\tilde{\tau}(1-u)}du=\int_{0}^{1}Z_{\tilde{\tau}(u)}du=\int_{0}^{\infty}Z_{s}d(1-e^{-A_{s}})=\int_{0}^{\infty}e^{-A_{s}}Z_{s}dA_{s},
\]
where we apply \cite[Ch.~0, Prop.~4.9]{revuzyor} in the third equality.
Recalling the definition of $(Z_{t})_{t\ge0}$ completes the derivation
of \eqref{eqn:rep_phi}.

Using \eqref{eqn:rep_ind}-\eqref{eqn:rep_phi}, the functional $J$
takes an equivalent form 
\[
J(x,\sigma,\tau;\gamma)=\tl\E_{x}\Big[\int_{0}^{\sigma}e^{-rs-A_{s-}}D(X_{s})ds+\gamma e^{-r\sigma-A_{\sigma-}}+\int_{[0,\sigma)}e^{-rs-A_{s}}m(X_{s})dA_{s}\Big].
\]
Since $(A_{t})$ is increasing, it has only a countable number of
jumps, so 
\[
\int_{0}^{\sigma}e^{-rs-A_{s-}}D(X_{s})ds=\int_{0}^{\sigma}e^{-rs-A_{s}}D(X_{s})ds
\]
and \eqref{eqn:J_int} is proved. 
\end{proof}
\begin{proof}[Proof of Lemma \ref{lem:lambda}]
Take any $x\le\alpha(\theta_{U})$. The formula \eqref{eqn:derivative_lambda}
for the derivative of the mapping from the statement of the lemma
follows by straightforward differentiation. Denote it by $g(y;x)$.
By assumptions, we have $m(x)\ge m_{\min}>\theta_{U}$ and $D(x)\le r\theta_{U}$ since $x \le \alpha(\theta_U)$.
Hence $g(y;x)>0$. We also note that 
\[
\frac{rm(x)-D(x)}{(m(x)-y)^{2}}\le\frac{rm(x)}{(m(x)-\theta_{U})^{2}}\le\frac{r\,m_{\min}}{(m_{\min}-\theta_{U})^{2}},
\]
since the function $z\mapsto z/(z-\theta_{U})^{2}$ is decreasing
for $z>\theta_{U}$. 
\end{proof}
\begin{proof}[Proof of Lemma \ref{lem:monotonicity_lambda}]
Recalling that $a\mapsto Y(a)$ is decreasing, Lemma \ref{lem:lambda}
shows that $a\mapsto\lambda(x,Y(a))$ is decreasing on $\{a\ge0:\ x\le\alpha(Y(a))\}$
which is either a closed interval $[0,a^{*}(x)]$ or an empty set
(in which case we set $a^{*}(x)=0$), since $a\mapsto\alpha(Y(a))$
is continuous and decreasing. As $\lambda\ge0$ and $\lambda(x,\cdot)\equiv0$
on $(a^{*}(x),\infty)$, we conclude that $a\mapsto\lambda(x,a)$
is decreasing.

Fix now $a\ge0$ and notice that 
\[
x\mapsto\frac{rY(a)-D(x)}{m(x)-Y(a)}
\]
is decreasing on $x\le\alpha(Y(a))$. Indeed, $rY(a)-D(x)>0$ for
such $x$ and decreasing and the numerator is increasing in $x$.
We also have $\lambda(x,a)=0$ for $x>\alpha(Y(a))$ and $\lambda\ge0$,
so a potential jump at $x=\alpha(Y(a))$ is downward. This completes
the proof of monotonicity. 
\end{proof}
\begin{proof}[Proof of Lemma \ref{lem:equal_null}]
Define a semimartingale $Z_{t}=X_{t}-\varphi_{t}$. The occupation
times formula \cite[Cor.~1, p. 216]{Protter} shows that for any $\ve>0$
and $t>0$, $\P_{x}$-a.s., 
\begin{equation}
\int_{-\ve}^{\ve}L_{t}^{u}du=\int_{0}^{t}\ind{Z_{s-}\in[-\ve,\ve]}d[Z,Z]_{s}^{c}=\int_{0}^{t}\indd{Z_{s-}\in[-\ve,\ve]}b^{2}(X_{s})ds,\label{eqn:local_time}
\end{equation}
where $L_{t}^{u}$ is the local time of $(Z_{t})_{t\ge0}$ at the
level $u$ and $[Z,Z]^{c}$ is the path-by-path continuous part of
the quadratic variation $[Z,Z]$ and equals to the quadratic variation
$[Z^{c},Z^{c}]$ of the continuous local martingale part of $Z$ (see
\cite[p. 70]{Protter}). Clearly, the continuous local martingale
part $Z^{c}$ of $Z$ equals to the continuous local martingale part
of $X^{c}$ and, using \cite[Thm.~29, p.~75]{Protter}, we have $[Z^{c},Z^{c}]_{t}=[X^{c},X^{c}]_{t}=\int_{0}^{t}b^{2}(X_{s})ds$.
The equality \eqref{eqn:local_time} holds outside of $\P_{x}$-negligible
set common for every $t\ge0$; indeed, it is sufficient to apply the
above formula for a sequence $t_{n}\to\infty$.

Taking the limit in \eqref{eqn:local_time} as $\ve\downarrow0$,
the dominated convergence theorem implies that
\edt{
\[
0 = \lim_{\ve \downarrow0} \int_{-\ve}^{\ve}L_{t}^{u}du = \lim_{\ve \downarrow 0} \int_{0}^{t}\indd{(X_{s-} - \varphi_{s-}) \in[-\ve,\ve]}b^{2}(X_{s})ds = \int_{0}^{t}\ind{X_{s}=\varphi_{s}}b^{2}(X_{s})ds,
\]
where in the last equality we used the continuity of $(X_t)$.
}
To conclude, we recall that $b(\cdot)>0$. 
\end{proof}
\begin{proof}[Proof of Proposition \ref{prop:solution_A}]
\textbf{Construction of $(A_{t})$:} Function $\alpha$ is strictly
increasing and continuous (Lemma \ref{lemm:conti-alpha}), so its
inverse $\alpha^{-1}$ is well defined, continuous and strictly increasing.
Hence, $x\le\alpha(y)$ can be equivalently written as $y\ge\alpha^{-1}(x)$.
Define 
\begin{equation}
\lambda^{\ve}(x,y)=l(x,y)\ind{y\ge\alpha^{-1}(x)}+l(x,\alpha^{-1}(x))\ind{y<\alpha^{-1}(x)}\frac{1}{\ve}\big(y-\alpha^{-1}(x)+\ve\big)^{+},\label{eqn:lambda_ve_def}
\end{equation}
where $l(x,y)$ is defined in Lemma \ref{lem:lambda}. Using this
lemma and the above definition, the mapping $y\mapsto\lambda^{\ve}(x,y)$
is Lipschitz with the constant independent of $x$. Due to the continuity
of $\alpha$ and its inverse, $\lambda^{\ve}$ is continuous. Hence,
for any $\omega\in\Omega$, using the continuity of trajectories of
$(X_{t})$, there is a unique solution of the ODE 
\begin{equation}
dA_{t}^{\ve}=\lambda^{\ve}(X_{t},Y(A_{t}^{\ve}))dt,\qquad A_{0}^{\ve}=a\ge0,\label{eqn:ODE_A_ve}
\end{equation}
and it depends continuously on $a$. Since $\lambda^{\ve}$ is increasing
in $\ve$ and non-negative, by the comparison principle for ODEs,
the solution $A_{t}^{\ve}$ is increasing in $\ve$ and non-negative.
Hence the limit $A_{t}^{0}:=\lim_{\ve\downarrow0}A_{t}^{\ve}$ exists,
is increasing and right-continuous. We will show that it satisfies
\eqref{eqn:int_A} $\P_{x}$-a.s. It suffices to show that 
\begin{equation}
\lim_{\ve\downarrow0}\int_{0}^{t}\lambda^{\ve}(X_{s},Y(A_{s}^{\ve}))ds=\int_{0}^{t}\lambda(X_{s},Y(A_{s}^{0}))ds,\quad P_{x}-a.s.\label{eqn:ve_limit}
\end{equation}
From \eqref{eqn:lambda_ve_def}, we have the lower bound $\lambda^{\ve}(x,y)\ge l(x,y)\ind{y>\alpha^{-1}(x)}$.
As $l$ is non-negative, Fatou's lemma implies 
\[
\liminf_{\ve\downarrow0}\int_{0}^{t}\lambda^{\ve}(X_{s},Y(A_{s}^{\ve}))ds\ge\int_{0}^{t}l(X_{s},Y(A_{s}^{0}))\ind{Y(A_{s}^{0})>\alpha^{-1}(X_{s})}ds
\]
using that $l$ is continuous and $\ve\mapsto Y(A_{s}^{\ve})$ is
continuous and increasing for each $s\ge0$. For the upper bound,
we write 
\[
\lambda^{\ve}(x,y)\le l(x,y)\ind{y>\alpha^{-1}(x)}+l(x,\alpha^{-1}(x))\ind{\alpha^{-1}(x)-\ve<y\le\alpha^{-1}(x)}.
\]
This and the fact that $\limsup_{\ve\downarrow0}\{\alpha^{-1}(X_{s})-\ve<Y(A_{s}^{\ve})\le\alpha^{-1}(X_{s})\}\subset\{Y(A_{s}^{0})=\alpha^{-1}(X_{s})\}$
yield 
\[
\limsup_{\ve\downarrow0}\lambda^{\ve}(X_{s},Y(A_{s}^{\ve}))\le l(X_{s},Y(A_{s}^{0}))\ind{Y(A_{s}^{0})>\alpha^{-1}(X_{s})}+l(X_{s},\alpha^{-1}(X_{s}))\ind{Y(A_{s}^{0})=\alpha^{-1}(X_{s})}.
\]
Recall that $l$ is bounded, so we can apply reverse Fatou's lemma
\[
\limsup_{\ve\downarrow0}\int_{0}^{t}\lambda^{\ve}(X_{s},Y(A_{s}^{\ve}))ds\le\int_{0}^{t}l(X_{s},Y(A_{s}^{0}))\ind{Y(A_{s}^{0})>\alpha^{-1}(X_{s})}+l(X_{s},\alpha^{-1}(X_{s}))\ind{Y(A_{s}^{0})=\alpha^{-1}(X_{s})}ds.
\]
The process $s\mapsto A_{s}^{0}$ is increasing, hence of finite variation,
and right-continuous. Functions $Y$ and $\alpha$ are continuous
and $\{Y(A_{s}^{0})=\alpha^{-1}(X_{s})\}=\{\alpha\big(Y(A_{s}^{0})\big)=X_{s}\}$.
Since function $l$ is bounded, Lemma \ref{lem:equal_null} implies
that the integral $\int_{0}^{\infty}l(X_{s},\alpha^{-1}(X_{s}))\ind{Y(A_{s}^{0})=\alpha^{-1}(X_{s})}ds=0$
$\P_{x}$-a.s. We can therefore conclude that the following limit
exists $\P_{x}$-a.s. (with the measure zero set independent of $t$)
\[
\lim_{\ve\downarrow0}\int_{0}^{t}\lambda^{\ve}(X_{s},Y(A_{s}^{\ve}))ds=\int_{0}^{t}l(X_{s},Y(A_{s}^{0}))\ind{Y(A_{s}^{0})\ge\alpha^{-1}(X_{s})}ds=\int_{0}^{t}\lambda(X_{s},Y(A_{s}^{0}))ds.
\]
Hence $(A_{t}^{0})_{t\ge0}$ satisfies \eqref{eqn:int_A} and we will
use it as a definition of the process $(A_{t})_{t\ge0}$ from the
statement of the proposition. From \eqref{eqn:int_A} we deduce that
$(A_{t})$ is continuous $\P_{x}$-a.s.

\textbf{Markov property:} As the process $(A_{t})$ is constructed
$\omega$ by $\omega$, it is sufficient to show that for any $u\ge0$
and $s\ge0$ we have $A_{u+s}^{0}=\bar{A}_{s}^{0}$, where $\bar{A}_{t}^{0}=\lim_{\ve\downarrow0}\bar{A}_{t}^{\ve}$
with $(\bar{A}_{t}^{\ve})$ being the unique solution of 
\[
d\bar{A}_{t}^{\ve}=\lambda^{\ve}(X_{u+t},Y(\bar{A}_{t}^{\ve}))dt,\qquad\bar{A}_{0}^{\ve}=A_{u}^{0}.
\]
This is not immediate as it is possible that $A_{u}^{\ve}>A_{u}^{0}$
for all $\ve>0$ which implies $\bar{A}_{t}^{\ve}<A_{u+t}^{\ve}$,
at least for $t\le T(\omega)$ for some $T(\omega)>0$. To overcome
this problem, define $(\bar{A}_{t}^{\ve,\delta})$ as a solution to
\eqref{eqn:ODE_A_ve} with the initial condition $\bar{A}_{0}^{\ve,\delta}=A_{u}^{0}+\delta$,
for $\delta>0$. By the continuous dependence of the solution to \eqref{eqn:ODE_A_ve}
on the initial condition and the comparison principle for ODEs, we
have $\bar{A}_{t}^{\ve}=\inf_{\delta>0}\bar{A}_{t}^{\ve,\delta}$.
The mapping $\ve\mapsto\bar{A}_{t}^{\ve}$ is increasing, hence, $\bar{A}_{t}^{0}=\inf_{\ve,\delta>0}\bar{A}_{t}^{\ve,\delta}$.

Fix $\omega\in\Omega$ (we omit it in the notation for the clarity
of exposition). For any $\delta>0$ there is $\ve>0$ such that $A_{u}^{\ve}\le A_{u}^{0}+\delta$.
By the uniqueness of solutions to \eqref{eqn:ODE_A_ve} and the comparison
principle, we have $A_{u+s}^{\ve}\le\bar{A}_{s}^{\ve,\delta}$, $s\ge0$,
so $A_{u+s}^{0}\le\bar{A}_{s}^{\ve,\delta}$. This implies that $A_{u+s}^{0}\le\bar{A}_{s}^{0}$.
The opposite inequality follows from $\bar{A}_{0}^{\ve}\le A_{u}^{\ve}$
and analogous arguments as above.

\textbf{Strong Markov property:} We apply \cite[Ch.~I, Prop.~8.2]{Blumenthal}.
Condition (S.R.) for the process $(X_{t},A_{t})_{t\ge0}$ is immediate.
Indeed, fix a stopping time $\sigma$. Then $X_{\sigma}$ is $\cF_{\sigma}$-measurable
since $(X_{t})$ is strong Markov. Due to the boundedness of $\lambda$,
the process $(A_{t})$ does not explode at a finite time. Furthermore,
for any $\ve>0$, $A_{\sigma}^{\ve}(\omega)$ is defined based on
the trajectory $(X_{t}(\omega))_{t\le\sigma(\omega)}$, so $A_{\sigma}^{\ve}$
is $\cF_{\sigma}$-measurable. The random variable $A_{\sigma}^{0}$
is also $\cF_{\sigma}$-measurable as an $\P_{x}$-a.s. limit of a
sequence of $\cF_{\sigma}$-measurable random variables; recall that
the probability zero set from Lemma \ref{lem:equal_null} is universal
for all $\sigma$.

Condition (S.M.)' of \cite[Ch.~I, Prop.~8.2]{Blumenthal} is proved
analogously as the Markov property but with $u$ replaced by $\sigma(\omega)$. 
\end{proof}
\begin{proof}[Proof of Lemma \ref{lem:prop_A}]
The monotonicity of the mapping $a\mapsto A_{t}^{a}$ follows from
the comparison principle for ODEs applied to \eqref{eqn:ODE_A_ve}.
To prove the second statement, we take $a>a'$ and write 
\[
A_{t}^{a}-A_{t}^{a'}=A_{0}^{a}-A_{0}^{a'}+\int_{0}^{t}\big[\lambda(X_{s},Y(A_{s}^{a}))-\lambda(X_{s},Y(A_{s}^{a'}))\big]ds\le A_{0}^{a}-A_{0}^{a'}=a-a',
\]
where we used that the integrand is non-positive because $A_{s}^{a}\ge A_{s}^{a'}$
and the mapping $a\mapsto\lambda(x,Y(a))$ is decreasing (by Lemma
\ref{lem:monotonicity_lambda}). The difference $A_{t}^{a}-A_{t}^{a'}$
is bounded from below by $0$ from the first part of the statement.

The monotonicity in $x$ follows from the observation that $X_{t}^{x}\ge X_{t}^{x'}$
for $x>x'$ and $t\ge0$ by the comparison principle for SDEs, so,
using Lemma \ref{lem:monotonicity_lambda}, 
\[
\lambda(X_{t}^{x},y)\le\lambda(X_{t}^{x'},y)\qquad\text{for all \ensuremath{y\in[\theta_{L},\theta_{U}]}.}
\]

For the continuity, fix $a\ge0$ and take $x_{n}\uparrow x_{0}$.
Then $A_{t}^{x_{n}}\ge A_{t}^{x_{0}}$ and so $\alpha(Y(A_{t}^{x_{n}}))\le\alpha(Y(A_{t}^{x_{0}}))$.
Notice that $\lambda(x,a)$ is bounded from above by $r\theta_{U}/(m_{\min}-\theta_{U})<\infty$,
where $m_{\min}=\inf_{x\in\cI}m(x)$. By Fatou's lemma 
\begin{align*}
A_{t}^{\infty}:=\limsup_{n\to\infty}A_{t}^{x_{n}} & \le a+\int_{0}^{t}\limsup_{n\to\infty}\lambda(X_{s}^{x_{n}},Y(A_{s}^{x_{n}}))ds\le a+\int_{0}^{t}\limsup_{n\to\infty}\lambda(X_{s}^{x_{n}},Y(A_{s}^{x_{0}}))ds\\
 & =a+\int_{0}^{t}\lambda(X_{s}^{x_{0}},Y(A_{s}^{x_{0}}))ds=A_{t}^{x_{0}},
\end{align*}
where the second inequality is based on the monotonicity of $a\mapsto\lambda(x,Y(a))$
(Lemma \ref{lem:monotonicity_lambda}) and the penultimate equality
is because $\lambda(z_{n},y)\downarrow\lambda(z,y)$ for $z_{n}\uparrow z$,
and $X_{s}^{x_{n}}\uparrow X_{s}^{x_{0}}$ by \eqref{eqn:cont_X_mono}
(the convergence is for $\omega$ outside of $\P$-negligible set
independent from $s$). Combined with the opposite inequality as $A_{t}^{x_{n}}\ge A_{t}^{x_{0}}$,
we obtain $A_{t}^{x}=A_{t}^{\infty}$. By the arbitrariness of $t$,
$A_{t}^{x_{n}}\downarrow A_{t}^{x_{0}}$ for all $t\ge0$. Dini's
theorem implies that the convergence is uniform on compact sets.

We now turn our attention to the case of $x_{n}\downarrow x_{0}$.
Then $A_{t}^{x_{n}}\le A_{t}^{x_{0}}$ and so $\alpha(Y(A_{t}^{x_{n}}))\ge\alpha(Y(A_{t}^{x_{0}}))$.
By Fatou's lemma 
\begin{align*}
A_{t}^{\infty}:=\liminf_{n\to\infty}A_{t}^{x_{n}} & \ge a+\int_{0}^{t}\liminf_{n\to\infty}\lambda(X_{s}^{x_{n}},Y(A_{s}^{x_{n}}))ds\ge a+\int_{0}^{t}\liminf_{n\to\infty}\lambda(X_{s}^{x_{n}},Y(A_{s}^{x_{0}}))ds\\
 & \ge a+\int_{0}^{t}\frac{rY(A_{s}^{x_{0}})-D(X_{s}^{x_{0}})}{m(X_{s}^{x_{0}})-Y(A_{s}^{x_{0}})}\indd{X_{s}^{x_{0}}<\alpha(Y(A_{s}^{x_{0}}))}ds,
\end{align*}
where in the last inequality, we use $X_{s}^{x_{n}}\downarrow X_{s}^{x_{0}}$
by \eqref{eqn:cont_X_mono} to argue the convergence of the fraction
due to its continuity, and the convergence of the indicator functions
because of $\liminf_{n\to\infty}\{X_{s}^{x^{n}}\le\alpha(Y(A_{s}^{x_{0}}))\}\supset\{X_{s}^{x_{0}}<\alpha(Y(A_{s}^{x_{0}}))\}$.
Denote by $\bar{A}_{t}$ the right-hand side of the above estimate.
When we recall that $A_{t}^{\infty}\le A_{t}^{x_{0}}$, we obtain
$A_{t}^{x_{0}}\ge\bar{A}_{t}$ and both processes are continuous.
For any $t\ge0$, we compute 
\begin{multline*}
A_{t}^{x_{0}}-\bar{A}_{t}=\int_{0}^{t}\frac{rY(A_{s}^{x_{0}})-D(X_{s}^{x_{0}})}{m(X_{s}^{x_{0}})-Y(A_{s}^{x_{0}})}\big(\indd{X_{s}^{x_{0}}\le\alpha(Y(A_{s}^{x_{0}}))}-\indd{X_{s}^{x_{0}}<\alpha(Y(A_{s}^{x_{0}}))}\big)ds\\
\le\lambda_{\max}\int_{0}^{t}\indd{X_{s}^{x_{0}}=\alpha(Y(A_{s}^{x_{0}}))}ds,
\end{multline*}
where $\lambda_{\max}$ is the upper bound for $\lambda$. By Lemma
\ref{lem:equal_null}, the right-hand side is $\P$-a.s. zero and
the null set can be taken independent of $t$. Hence $A_{t}^{x_{0}}$
and $\bar{A}_{t}$ are $\P$-indistinguishable. Since $A_{t}^{x_{0}}\ge A_{t}^{x_{n}}$
and $A_{t}^{\infty}=\liminf_{n\to\infty}A_{t}^{x_{n}}\ge\bar{A}_{t}$,
we have that $A_{t}^{x_{n}}$ converges to $A_{t}^{x_{0}}$ for all
$t$ outside of $\P$-negligible set. By Dini's theorem, the convergence
is uniform for $t$ on compact sets. Due to the monotonicity of $x\mapsto A_{t}^{x}$
the convergence over monotone sequences $x_{n}$ extends to general
sequences. 
\end{proof}
\begin{proof}[Proof of Proposition \ref{prop:continuity}]
There are continuous bounded functions $\lambda^{\ve},\lambda_{\ve}$
such that $\lambda^{\ve}\ge\lambda\ge\lambda_{\ve}$, and $\lambda^{\ve}\downarrow\lambda$
and $\lambda_{\ve}\uparrow\lambda$ pointwise as $\ve\downarrow0$;
see \eqref{eqn:lambda_ve_def} for an explicit definition of $\lambda^{\ve}$.
This is because the discontinuity of $\lambda$ is on a continuous
curve $\{(x,a):\ x=\alpha(Y(a))\}$. Consider 
\[
\tilde{v}^{\ve}(x,a;\theta)=\sup_{\sigma\in\cT(\cF_{t})}\E_{xa}\Big[\int_{0}^{\sigma}e^{-rs-A_{s}}\big(D(X_{s})-r\theta+\lambda^{\ve}(X_{s},Y(A_{s}))(m(X_{s})-\theta)\big)ds\Big],
\]
and analogously $\tilde{v}_{\ve}$; \edt{notice that the dynamics
of the process $(A_{t})$ does not depend on $\ve$}. Since $m(x)>\theta_{U}$
for all $x\in\cI$, we have $\tilde{v}^{\ve}\ge\tilde{v}\ge\tilde{v}_{\ve}$.

Assume that $\tilde{v}^{\ve}$ and $\tilde{v}_{\ve}$ are continuous;
the proof will come shortly. For any $\delta$-optimal stopping time
$\sigma_{\delta}$ for $\tilde{v}$, we have 
\begin{align*}
\tilde{v}(x,a;\theta)-\delta & \le\E_{xa}\Big[\int_{0}^{\sigma_{\delta}}e^{-rs-A_{s}}\big(D(X_{s})-r\theta+\lambda(X_{s},Y(A_{s}))(m(X_{s})-\theta)\big)ds\Big]\\
 & =\lim_{\ve\downarrow0}\E_{xa}\Big[\int_{0}^{\sigma_{\delta}}e^{-rs-A_{s}}\big(D(X_{s})-r\theta+\lambda_{\ve}(X_{s},Y(A_{s}))(m(X_{s})-\theta)\big)ds\Big]\\
 & \le\liminf_{\ve\downarrow0}\tilde{v}_{\ve}(x,a;\theta),
\end{align*}
where the equality is by the dominated convergence theorem and the
last inequality is because the expectation in the second line is dominated
by $\tilde{v}_{\ve}(x,a;\theta)$. Combining this for arbitrary $\delta>0$
with $\tilde{v}\ge\tilde{v}_{\ve}$, we obtain that $\tilde{v}_{\ve}$
converges pointwise from below to $\tilde{v}$, hence $\tilde{v}$
is lower semicontinuous.

The mapping $\ve\mapsto\tilde{v}^{\ve}(x,a;\theta)$ is increasing
for each fixed $x,a,\theta$ hence the limit $\tilde{v}^{0}(x,a;\theta):=\lim_{\ve\downarrow0}\tilde{v}^{\ve}(x,a;\theta)$
exists. Using the continuity of $\tilde{v}^{\ve}$, by the general
theory of optimal stopping (see, e.g., \cite{elkaroui1981,peskir2006optimal}),
the stopping time $\sigma^{\ve}=\inf\{t\ge0:\ \tilde{v}^{\ve}(X_{t},A_{t};\theta)=0\}$
is optimal for $\tilde{v}^{\ve}$. It is also increasing in $\ve$
due to the monotonicity of $\tilde{v}^{\ve}$ in $\ve$. Hence 
\begin{align*}
\tilde{v}^{0}(x,a;\theta) & =\lim_{\ve\downarrow0}\E_{xa}\Big[\int_{0}^{\infty}\ind{t\le\sigma^{\ve}}e^{-rs-A_{s}}\big(D(X_{s})-r\theta+\lambda^{\ve}(X_{s},Y(A_{s}))(m(X_{s})-\theta)\big)ds\Big]\\
 & \le\E_{xa}\Big[\int_{0}^{\infty}\ind{t\le\sigma^{0}}e^{-rs-A_{s}}\big(D(X_{s})-r\theta+\lambda(X_{s},Y(A_{s}))(m(X_{s})-\theta)\big)ds\Big]\\
 & \le\tilde{v}(x,a;\theta),
\end{align*}
where in the first inequality we used the reverse Fatou's lemma (as
the integrand is bounded from above) and $\sigma^{0}=\lim_{\ve\downarrow0}\sigma^{\ve}=\inf_{\ve>0}\sigma^{\ve}$
is a stopping time as an infimum of stopping times. The inequality
$\tilde{v}^{0}\ge\tilde{v}$ is immediate from $\tilde{v}^{\ve}\ge\tilde{v}$.
Hence $\tilde{v}^{\ve}$ converges to $\tilde{v}$ pointwise from
above, which implies that $\tilde{v}$ is upper semicontinuous.

Combining the above two semicontinuity results shows the continuity
of $\tilde{v}$. By the general optimal stopping theory, see \cite{elkaroui1981,peskir2006optimal},
the optimal stopping time is given by the formula in the statement
of the proposition.

In remains to show the continuity of $\tilde{v}^{\ve}$. The proof
for $\tilde{v}_{\ve}$ is analogous. Take $(x_{n},a_{n})$ converging
to $(x,a)$ and such that $(x_{n})$ is monotone (which can be assumed
without loss of generality). For any $T>0$, we have the following
estimate 
\begin{align*}
&\big|\tilde{v}^{\ve}(x_{n},a_{n};\theta)-\tilde{v}^{\ve}(x,a;\theta)\big| \\
&\le \sup_{\sigma \in \cF(\cF_t), \sigma \le T} \bigg\{
\E\bigg[\int_{0}^{\sigma}e^{-rs}\Big|e^{-A_{s}^{x_{n},a_{n}}}\big(D(X_{s}^{x_{n}})-\theta\big)-e^{-A_{s}^{x,a}}\big(D(X_{s}^{x})-\theta\big)\Big|ds\bigg]\\
 & \hspace{92pt}+\E\bigg[\int_{0}^{\sigma}e^{-rs}\Big|e^{-A_{s}^{x_{n},a_{n}}}\lambda^{\ve}(X_{s}^{x_{n}},Y(A_{s}^{x_{n},a_{n}}))\big(m(X_{s}^{x_{n}})-\theta\big)\\
 & \hspace{160pt}-e^{-A_{s}^{x,a}}\lambda^{\ve}(X_{s}^{x},Y(A_{s}^{x,a}))\big(m(X_{s}^{x})-\theta\big)\Big|ds\bigg]\bigg\}+2e^{-rT}\frac{1}{r}C\\
& \le\E\bigg[\int_{0}^{T}e^{-rs}\Big|e^{-A_{s}^{x_{n},a_{n}}}\big(D(X_{s}^{x_{n}})-\theta\big)-e^{-A_{s}^{x,a}}\big(D(X_{s}^{x})-\theta\big)\Big|ds\bigg]\\
 & \hspace{12pt}+\E\bigg[\int_{0}^{T}e^{-rs}\Big|e^{-A_{s}^{x_{n},a_{n}}}\lambda^{\ve}(X_{s}^{x_{n}},Y(A_{s}^{x_{n},a_{n}}))\big(m(X_{s}^{x_{n}})-\theta\big)\\
 & \hspace{80pt}-e^{-A_{s}^{x,a}}\lambda^{\ve}(X_{s}^{x},Y(A_{s}^{x,a}))\big(m(X_{s}^{x})-\theta\big)\Big|ds\bigg]+2e^{-rT}\frac{1}{r}C,
\end{align*}
where 
\[
C:=\sup_{(x,a)\in\cI\times[0,\infty)}\big(D(x)-r\theta+\lambda^{\ve}(x,Y(a))(m(x)-\theta)\big)<\infty.
\]
By Lemma \ref{lem:prop_A} (c.f. Remark \ref{rem:cont_X}), $A_{s}^{x_{n},a_{n}}$
converges pointwise to $A_{s}^{x,a}$ for all $s\in[0,T]$ and $\omega$
outside of a $\P$-negligible set. We also have convergence of $(X_{s}^{x_{n}})_{s\in[0,T]}$
to $(X_{s}^{x})_{s\in[0,T]}$ outside of a $\P$-negligible set. We
can therefore conclude, by the dominated convergence theorem, that
\[
\lim_{n\to\infty}\big|\tilde{v}^{\ve}(x_{n},a_{n};\theta)-\tilde{v}^{\ve}(x,a;\theta)\big|\le2e^{-rT}\frac{1}{r}C.
\]
Since $T$ is arbitrary, this shows the continuity of $\tilde{v}^{\ve}$. 
\end{proof}

\section{Two special cases} \label{sec:Discussions}
\edt{
In this section, we explore how our solution behaves
when we remove either the stochastic state variable or the private
information from our model. These are two special cases that have
been studied in the previous literature on exit games. By comparing
our results with known results from the literature, we establish the
robustness and generality of our solution.

First, we remove the dynamics of the stochastic
state variable from the solution by setting (formally) $\mu(\cdot)=b(\cdot)=0$
with an initial value of the state variable set to $x$. We assume
that $D(x)/r<\theta_{L}$ so that all types of players have an incentive
to exit. In this case, $\alpha(\theta)=\infty$ for all $\theta\in[\theta_{L},\theta_{U}]$,
so the rate of exit reduces to 
\[
\lambda(x,y)=\frac{ry-D(x)}{m(x)-y}.
\]
One striking difference from the case of a dynamic state variable
is that $\lambda(x,y)$ is always strictly positive. Furthermore, from (\ref{eqn:tau_form}),
the exit strategy of a player of type $\theta$ is given by 
\[
\hat{\tau}(\theta)=\inf\{t\ge0:\ Y_{t}<\theta\}.
\]
Because $Y_{t}$ possesses a deterministic dynamics, $\hat{\tau}(\theta)$
is also deterministic. Thus, each type of a player chooses a deterministic
time to exit at the outset of the game. This feature is consistent
with \cite{Milgrom1985,Fudenberg1986}, who examined deterministic
exit games with private types.}

\edt{Next, we keep the stochastic state variable but
remove the uncertainty in the exit value. More precisely, we take
the limits $\theta_{L}\rightarrow\theta$ and $\theta_{U}\rightarrow\theta$
and study the behaviour of the equilibrium strategy profile. This requires care as in the limit the distribution of a player's type degenerates to a deterministic quantity $\theta$. To get around this difficulty, we
re-express (\ref{eqn:tau_form}) in terms of the process $A_{t}$ as follows:
\[
\tau_{i}=\inf\{t\ge0:\ e^{-A_{t}} < F(\theta_{i})\}.
\]
Recall that $F(\theta_{i})$ is uniformly distributed within the interval
$[0,1]$, and hence, we can reformulate the condition $e^{-A_{t}}<F(\theta_{i})$
as $e^{-A_{t}}<\hat{\epsilon}_{i}$, where $\hat{\epsilon}_{i}$ is
a random variable uniformly distributed on $[0,1]$. We now take the limits $\theta_{L}\rightarrow\theta$ and $\theta_{U}\rightarrow\theta$:
\begin{equation}\label{eq:tau-limit}
\tau_{i}=\inf\{t\ge0:\ e^{-A_{t}} < \hat{\epsilon}_{i}\}.
\end{equation}
In the limit the dynamics of $(A_t)$ takes the form
\[
dA_t = \frac{r\theta-D(X_t)}{m(X_t)-\theta}\ind{X_t \le\alpha(\theta)} =: \tilde\lambda (X_t).
\]
We can identify \eqref{eq:tau-limit} with a mixed
strategy equilibrium for stochastic exit games with known exit values.
Indeed, in accordance with the standard definition
of a mixed strategy, each player can be viewed as having a randomisation device uniformly distributed on $[0,1]$ which generates a random value
in the beginning of the game; we can identify this randomisation device with $\hat{\epsilon}_{i}$.
The player then exits at the first instance that $e^{-A_{t}}$ falls
below $\hat{\epsilon}_{i}$. Because the exit rate $\tilde\lambda(X_{t})$
is positive only when $X_{t}\le\alpha(\theta)$, the strategy \eqref{eq:tau-limit}
coincides with the mixed strategy found by \cite{Steg2015,Georgiadis2019}. We conclude
that our equilibrium converges to the established results as $\theta_{L}$
and $\theta_{U}$ approach $\theta$.
}

\section{Uniqueness of absolutely continuous symmetric Bayesian equilibria}

\label{sec:uniqeness}

In the previous section, we have constructed a symmetric equilibrium.
Here we show that this is the unique equilibrium in a certain subclass
of symmetric equilibria in which individual strategies are of the
form \eqref{eqn:tau_form}. 

\edt{Difficulties in the proof of uniqueness of symmetric equilibria driven by a belief process stem from the continuum of player types and the diffusive dynamics of the underlying state process. The first technical result shows that the player's strategy is a solution to a Markovian optimal stopping problem for almost every value of player type, i.e., the best response optimal stopping problem. The Markovian structure is provided by the state process and the belief process that defines the equilibrium. Classically, the solution of a stopping problem is given by the hitting time of a stopping set on which the value function coincides with the payoff. This is the smallest optimal stopping time, however it may not be the only one, so we cannot assume that the player's strategy determines the stopping set for the best response problem. Instead, we work with an action set which collects all points in which the equilibrium strategy prescribes to stop immediately and describe it uniquely in terms of the function $\alpha(\cdot)$ from Section \ref{sec:single_player}. Hence, every symmetric equilibrium is described by the same action set in the $2$-dimensional space comprising the state process and the belief, but there may be potentially many equilibria driven by different belief processes. The uniqueness of the latter is the second technical result of this section. The optimality of the equilibrium strategy for the best response optimal stopping problem yields that the equilibrium strategy established in the paper provides an upper bound for infinitesimal changes of the equilibrium generating process $(A_t)$. The lower bound for infinitesimal changes of $(A_t)$ arises from examining deviations in equilibrium strategies for players with nearly identical types.}

As in the previous section, for the convenience
of presentation, we rewrite strategies are of the
form \eqref{eqn:tau_form} in terms of an increasing
right-continuous process $(A_{t})_{t\ge0}$ with $A_{0}=0$: 
\begin{equation}
\tau_{i}=\inf\{t\ge0:\ A_{t}>A(\theta_{i})\},\label{eqn:tau_A}
\end{equation}
where $A(\cdot)$ is defined in \eqref{eqn:notation_A}. We also impose
Assumption \ref{ass:theta_cdf}. \edt{In this section, we restrict
our attention to absolutely continuous equilibria. The reader is referred
to \cite[Section~3.2]{Decamps2022} for a complete characterisation
of stopping times of Markovian type of the form \eqref{eqn:tau_A}.}

\begin{definition}\label{def:abs_cont_perfect_equil} A strategy
profile $(\tau_{1},\tau_{2})$ with $\tau_{i}$ given by \eqref{eqn:tau_A} \edt{equipped with the prior distribution $F$}
is called an \emph{absolutely continuous symmetric Bayesian Nash
equilibrium} if 
\begin{enumerate}
\item[(i)] the process $(X_{t},A_{t})_{t\ge0}$ is a strong Markov process; 
\item[(ii)] for $x\in\cI$ and $a\ge0$, the process $(A_{t})_{t\ge0}$ is absolutely
continuous with respect to the Lebesgue measure and satisfies for
$t\ge0$ 
\begin{equation}
A_{t}=a+\int_{0}^{t}\varphi(X_{s},Y(A_{s}))ds,\qquad\P_{x}-a.s.\label{eqn:symm_A}
\end{equation}
\edt{for a measurable function $\varphi:\cI\times[\theta_{L},\theta_{U}]\to[0,\infty)$
called a \emph{generator}.} 
\item[(iii)] For any $a\ge0$ and $x\in\cI$, stopping times $\tau_{1},\tau_{2}$
given by 
\begin{equation}
\tau_{i}=\inf\{t\ge0:\ A_{t}>A(\hat{\theta}_{i})\},\quad i=1,2,\label{eqn:tau_A_a}
\end{equation}
with $A_{t}$ satisfying \eqref{eqn:symm_A}, form a Nash equilibrium
for $x$ in the sense of Def.~\ref{def:Nash} with $\hat{\theta}_{i}=F_{|Y(a)}^{-1}(F(\theta_{i}))\sim F_{|Y(a)}$,
$i=1,2$, where $F_{|y}(z)=F(z\wedge y)/F(y)$. 
\end{enumerate}
\end{definition} 
\edt{ \begin{definition} An absolutely continuous
symmetric Bayesian Nash equilibrium is called \emph{lower semi-continuous}
if the generator $\varphi$ is a lower semi-continuous function, and
\emph{upper semi-continuous} if the generator $\varphi$ is an upper
semi-continuous function. \end{definition} }

\edt{The above definition of an absolutely continuous symmetric Bayesian Nash equilibrium is weaker than a PBE adopted in Subsection \ref{subsec:PBE} as it does not require the strategy profile to be an equilibrium at any time $t$ even in the case of a deviation of one of a player. This weaker notion is however sufficient to prove uniqueness.}

\begin{remark} Since $F$ is continuous and strictly increasing,
$F(\theta_{i})\sim U(0,1)$, so $\hat{\theta}_{i}$ as defined above
has the distribution $F_{|Y(a)}$. The cumulative distribution function
$F_{|y}$ is strictly increasing on its domain, hence $\sigma(\theta_{i})=\sigma(\hat{\theta}_{i})$
(here $\sigma(Z)$ is the $\sigma$-algebra generated by $Z$), so
the information of a player observing $\theta_{i}$ is identical to
the information obtained from observing $\hat{\theta}_{i}$. \end{remark}

\begin{remark} Notice that $\varphi$ on the right-hand side of \eqref{eqn:symm_A}
can be discontinuous, so the solution does not exist in the classical
sense. It may also not be unique as the discontinuity may be crossed
infinitely many times in any time interval (due to the infinite variation
of $(X_{t})$) and $\varphi$ is not assumed to be Lipschitz. This lack
of uniqueness does not pose any mathematical difficulties for the analysis as we
only need that $(X_{t},A_{t})$ is a strong Markov process and $(A_{t})$
is increasing and absolutely continuous with a lower semi-continuous
weak derivative. An example of such an absolutely continuous symmetric
Bayesian Nash equilibrium can be found in Section \ref{sec:equil}.
\end{remark}

\begin{remark} Notice that for any $a\ge0$ and $x\in\cI$, $\tau_{i}$
given by \eqref{eqn:tau_A} or \eqref{eqn:tau_A_a} with $A_{0}=a$
is an $(\cF_{t}^{i})$-stopping time, $i=1,2$, because the filtration
$(\cF_{t})_{t\ge0}$ is complete with respect to $\P_{x}$, $x\in\cI$,
see \cite[Chapter I, Theorem 10.7]{Blumenthal}. \end{remark}

\edt{Before formulating main results of this section, we need to introduce
the following notation. We define an upper semi-continuous envelope
$\varphi^{*}$ of $\varphi$ by 
\[
\varphi^{*}(x,y)=\limsup_{(x',y')\to(x,y)}\varphi(x',y')
\]
and a lower semi-continuous envelope $\varphi_{*}$ of $\varphi$
by 
\[
\varphi_{*}(x,y)=\liminf_{(x',y')\to(x,y)}\varphi(x',y').
\]
}

\edt{ \begin{theorem}\label{thm:unique} For an absolutely continuous
Bayesian Nash equilibrium with a lower semi-continuous generator
$\varphi$, its upper semi-continuous envelope $\varphi^{*}$ coincides
with $\lambda$, i.e., we have $\varphi^{*}=\lambda$. \end{theorem}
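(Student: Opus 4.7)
Fix an absolutely continuous symmetric Bayesian Nash equilibrium with a lower semi-continuous generator $\varphi$, and denote by $\tau(\theta) = \inf\{t \ge 0 : A_t > A(\theta)\}$ the equilibrium stopping time. The first step is to recast the equilibrium condition as a Markovian optimal stopping problem. Since Lemma \ref{lem:J_int} requires only that $(A_t)$ is right-continuous and increasing, the condition of Def.~\ref{def:abs_cont_perfect_equil}(iii) is equivalent to the following: for every $(x, a) \in \cI \times [0, \infty)$ and $F_{|Y(a)}$-a.e.~$\theta \in [\theta_L, Y(a)]$, the stopping time $\tau(\theta)$ started from $A_0 = a$ is optimal for
\begin{equation*}
\tilde v_\varphi(x, a; \theta) = \sup_{\sigma \in \cT(\cF_t)} \E_{xa}\Big[\int_0^\sigma e^{-rs - A_s}\big(D(X_s) - r\theta + \varphi(X_s, Y(A_s))(m(X_s) - \theta)\big) ds\Big].
\end{equation*}
Applying Fubini's theorem in $(a, \theta)$, this yields, for Lebesgue-a.e.~$\theta \in (\theta_L, \theta_U]$, optimality of $\tau(\theta)$ started at $(x, A(\theta))$ for every $x \in \cI$.

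Next, I would show that $\varphi(x, y) = 0$ on $\{x > \alpha(y)\}$. The proof of Lemma \ref{lem:v_ge_0} carries over verbatim to any non-negative generator $\varphi$: it only uses $M \ge D$, $m > \theta_U$, and the generic reformulation of Lemma \ref{lem:J_int}, yielding $\tilde v_\varphi(x, a; \theta) \ge e^{-a}\tilde u(x; \theta)$. Specialising to $\theta = Y(a)$, whenever $x > \alpha(Y(a))$ the point $(x, a)$ lies strictly in the continuation region of $\tilde v_\varphi(\cdot,\cdot;Y(a))$; the optimality of $\tau(Y(a))$ then forces $\tau(Y(a)) > 0$ almost surely. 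However, if $\varphi(x, Y(a)) > 0$, lower semi-continuity of $\varphi$ combined with pathwise continuity of $(X_t, A_t)$ at $t = 0$ would imply $A_t > a$ for every $t > 0$, contradicting $\tau(Y(a)) > 0$. Hence $\varphi(x, Y(a)) = 0 = \lambda(x, Y(a))$ on this region.

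On the action region $\{x \le \alpha(Y(a))\}$ I would derive matching upper and lower bounds on $\varphi$, formalising the heuristic of Section \ref{subsec:heuristic}. On one hand, dynamic programming and optimality of $\tau(Y(a))$ imply that for any sufficiently small stopping time $\sigma_\delta \le \delta$,
\begin{equation*}
\E_{xa}\Big[\int_0^{\sigma_\delta} e^{-rs - A_s}\big(D(X_s) - rY(a) + \varphi(X_s, Y(A_s))(m(X_s) - Y(a))\big) ds\Big] \le 0;
\end{equation*}
dividing by $\E_{xa}[\sigma_\delta]$, sending $\delta \downarrow 0$, and invoking continuity of $(X_s, A_s)$ at $s=0$ together with lower semi-continuity of $\varphi$ (via Fatou's lemma: $\liminf_{\delta \downarrow 0} \delta^{-1} \int_0^\delta \varphi(X_s, Y(A_s)) ds \ge \varphi(x, Y(a))$) yields the pointwise bound $\varphi(x, Y(a)) \le \lambda(x, Y(a))$. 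On the other hand, for any $\theta < Y(a)$ at which optimality of $\tau(\theta)$ holds, the point $(x, a)$ lies strictly in the continuation region of type $\theta$ since $a < A(\theta)$, and a symmetric analysis of the dynamic programming identity for $\tilde v_\varphi(x, a; \theta)$ along $\sigma_\delta \le \tau(\theta)$ produces the reverse inequality $\varphi(x, Y(a)) \ge (r\theta - D(x))/(m(x) - \theta)$. Passing $\theta \uparrow Y(a)$ gives $\varphi(x, Y(a)) \ge \lambda(x, Y(a))$. Combining both bounds with the previous paragraph shows $\varphi = \lambda$ at all interior points of the action region and $\varphi = 0 = \lambda$ outside; at boundary points $\{x = \alpha(y)\}$, lower semi-continuity may force $\varphi = 0$ while $\lambda > 0$, but the upper semi-continuous envelope $\varphi^*$ recovers $\lambda$ by approaching from within the action region. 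The principal obstacle is the rigorous passage from dynamic programming inequalities to these pointwise bounds at points where neither $\varphi$ nor the generator of $(A_t)$ is continuous: a direct PDE or viscosity approach is unavailable, so the argument must proceed pathwise, relying on continuity of $(X_t, A_t)$ at $t=0$, Fatou-type passages through localising stopping times, and the lower semi-continuity hypothesis on the generator.
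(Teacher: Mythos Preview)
Your upper-bound argument ($\varphi\le\lambda$) and the argument that $\varphi=0$ on $\{x>\alpha(y)\}$ are essentially the paper's Lemma~\ref{lem:phi_bound} and Corollary~\ref{cor:varphi_zero}, and they work as you describe. The proposal breaks down at the lower bound, where the ``symmetric analysis'' hides the two genuine difficulties of the proof.

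First, the analysis is not symmetric. For the upper bound you correctly use Fatou and lower semi-continuity of $\varphi$ to pass a $\liminf$ inside the shrinking integral. For the lower bound the inequality runs the other way: from the dynamic programming identity you obtain an integral that is \emph{bounded below}, and extracting a pointwise bound requires a $\limsup$, i.e.\ upper semi-continuity. This is exactly why the statement is about $\varphi^{*}$ and not $\varphi$; your sketch claims $\varphi(x,Y(a))\ge (r\theta-D(x))/(m(x)-\theta)$, which cannot be obtained with only the l.s.c.\ hypothesis. Second, localising by $\sigma_\delta\wedge\tau(\theta)$ produces a remainder term on the event $\{\sigma_\delta<\tau(\theta)\}$ (the process escapes the spatial ball before the belief clock moves by $A(\theta)-a$). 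After dividing by $\E[\sigma_\delta\wedge\tau(\theta)]$ and sending $\theta\uparrow Y(a)$, this term is of order $\P_x(\sigma_\delta<t)/t$ as $t\downarrow 0$, and showing it vanishes requires the exponential decay estimate for diffusion exit times (the paper's Appendix); a bare Fatou argument does not control it.

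Even once these are handled, the lower bound only yields $\varphi^{*}\ge\lambda$ on $\mathrm{cl}\{\varphi>0\}$, because the whole localisation argument is anchored at a point where $\varphi(x,\theta)>0$ (needed so that $\tau(\gamma)\to 0$ as $\gamma\uparrow\theta$). To conclude $\varphi^{*}\ge\lambda$ on all of $\{x\le\alpha(y)\}$ you must rule out open pockets where $\varphi\equiv 0$ inside the action region. The paper does this by an independent characterisation of the action sets $S_\theta=\{x:\P_{x,A(\theta)}(\tau_\theta=0)=1\}$, proving $S_\theta=(x_L,\alpha(\theta)]$ via closedness, the containment $X_{\tau_\theta}\in S_\theta$, and a direct suboptimality argument (Lemmas~\ref{lem:S_theta_closed}--\ref{lem:S_theta_interval}); this block of work is absent from your plan. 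A smaller but related point: your Fubini step gives optimality of $\tau(\theta)$ only for a.e.\ $\theta$, whereas the arguments above repeatedly evaluate at the specific value $\theta=Y(a)$; the paper upgrades to every $\theta\in(\theta_L,Y(a)]$ by a left-continuity argument (Proposition~\ref{prop:converse}), which you should include.
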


Before proving this theorem, we discuss its consequences. Theorem \ref{thm:unique} establishes the uniqueness of the absolutely continuous
symmetric Bayesian Nash equilibrium with a lower semi-continuous
generator. This does not cover the case of the equilibrium defined by $\lambda$
in the previous section which is upper semi-continuous, but we will
be able to strengthen this result. \begin{definition}\label{def:semi-cont}
Assume that a measurable function $\varphi$ is a generator of an
absolutely continuous Bayesian Nash equilibrium. If the set $\Delta\varphi:=\{(x,y)\in\cI\times[\theta_{L},\theta_{U}]:\varphi^{*}(x,y)\ne\varphi_{*}(x,y)\}$
is a countable union of graphs of the form $x=h(y)$ for a function
$h:[\theta_{L},\theta_{U}]\to\cI$ of finite variation, we will call
the generator \emph{semi-continuously bounded}. \end{definition}
Notice that the generator $\lambda$ from the previous section is
semi-continuously bounded as the set $\Delta\lambda$ from the above
definition consists of a graph of function $\alpha$, the stopping
boundary of the single-player problem.

\begin{lemma}\label{lem:semi_cont} Let $\varphi$ be a semi-continuously
bounded generator and $(X_{t},A_{t})$ the pair of processes from
Definition \ref{def:abs_cont_perfect_equil}. The process $(A_{t})_{t\ge0}$
satisfies \eqref{eqn:symm_A} with the upper semi-continuous envelope
$\varphi^{*}$ and with the lower semi-continuous envelope $\varphi_{*}$
of $\varphi$. \end{lemma}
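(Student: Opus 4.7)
The plan is to reduce the lemma to showing that the trajectory $(X_s, Y(A_s))_{s \ge 0}$ spends zero Lebesgue time on the exceptional set $\Delta\varphi$, $\P_x$-a.s. By definition of the envelopes we have $\varphi_* \le \varphi \le \varphi^*$ pointwise, with equality of all three on $(\Delta\varphi)^c$. Hence if
\[
\int_0^t \ind{(X_s, Y(A_s)) \in \Delta\varphi}\, ds = 0, \qquad t \ge 0, \quad \P_x\text{-a.s.,}
\]
the integrals $\int_0^t \varphi(X_s, Y(A_s)) ds$, $\int_0^t \varphi^*(X_s, Y(A_s)) ds$ and $\int_0^t \varphi_*(X_s, Y(A_s)) ds$ all coincide, and the two alternative representations of $(A_t)$ follow directly from \eqref{eqn:symm_A}.

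By semi-continuous boundedness, $\Delta\varphi = \bigcup_n \Gamma_n$ where $\Gamma_n$ is the graph of a finite-variation function $h_n : [\theta_L, \theta_U] \to \cI$, so it suffices to prove that $\int_0^t \ind{X_s = h_n(Y(A_s))}\, ds = 0$ $\P_x$-a.s., for each $n$. I would apply Lemma \ref{lem:equal_null} to $\psi_s^n := h_n^\ell(Y(A_s))$, where $h_n^\ell$ is the càglàd modification of $h_n$, equal to $h_n$ outside a countable set $\Sigma_n$. Since $(A_t)$ is continuous and increasing (by Definition \ref{def:abs_cont_perfect_equil}(ii), as $\varphi \ge 0$) and $Y$ is continuous and strictly decreasing, the process $(Y(A_s))$ is continuous and decreasing; composition with the càglàd function $h_n^\ell$ yields a càdlàg, $(\cF_t)$-adapted process, and finite variation of $h_n^\ell$ transfers to $\psi^n$ because it is preserved under composition with a monotone function. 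Hence Lemma \ref{lem:equal_null} gives $\int_0^t \ind{X_s = \psi_s^n}\, ds = 0$ $\P_x$-a.s.

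To account for the discrepancy between $h_n$ and $h_n^\ell$, observe that $\{s : X_s = h_n(Y(A_s))\}$ and $\{s : X_s = \psi_s^n\}$ differ only on $\{s : Y(A_s) \in \Sigma_n\}$. On that event, the condition $X_s = h_n(Y(A_s))$ reduces, for each $y \in \Sigma_n$, to $X_s = h_n(y)$; Lemma \ref{lem:equal_null} applied to the constant process $h_n(y)$ then shows each such set has Lebesgue measure zero. Summing over the countable set $\Sigma_n$, and finally over $n$, produces the desired conclusion.

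The main obstacle I expect is precisely the regularity mismatch between the hypothesis of Lemma \ref{lem:equal_null} (càdlàg) and the natural outcome of composing a finite-variation function with a continuous decreasing process, which is càglàd unless the outer function is taken in its càglàd version. The resolution is the two-step reduction just described: pass to the càglàd modification $h_n^\ell$ to apply Lemma \ref{lem:equal_null} once, and then clean up the countably many modification points by invoking the same lemma for the constant-level processes $h_n(y)$, $y \in \Sigma_n$.
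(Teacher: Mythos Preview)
Your proof is correct and follows the same approach as the paper, which simply states that the result is an immediate consequence of Lemma~\ref{lem:equal_null}. You have carefully unpacked what that one-line proof entails: the envelopes agree with $\varphi$ off $\Delta\varphi$, so it suffices to show that $(X_s, Y(A_s))$ spends zero Lebesgue time on each graph $\Gamma_n$, and this is exactly the content of Lemma~\ref{lem:equal_null} applied to the process $h_n(Y(A_s))$.

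Your treatment in fact goes further than the paper by addressing a point it glosses over, namely that Lemma~\ref{lem:equal_null} requires a \cadlag comparison process while $h_n$ is only assumed to have finite variation. Your two-step fix---pass to the c\`agl\`ad modification $h_n^\ell$, verify that $h_n^\ell(Y(A_s))$ is \cadlag because $Y(A_s)$ is continuous and decreasing, and then handle the countably many points of $\Sigma_n$ via the constant-level case of Lemma~\ref{lem:equal_null}---is the clean way to make the argument rigorous.
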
 
\begin{proof}
The result is an immediate consequence of Lemma \ref{lem:equal_null}. 
\end{proof}
We recall, see Remark \ref{rem:carath}, that the process $(A_{t})$
may not be uniquely determined. However, the process constructed in
Section \ref{sec:equil} is the maximal solution, so it yields the
smallest stopping times.

We conclude with the main uniqueness result of the paper.

\begin{corollary} Generator $\lambda$ determines the unique absolutely
continuous  Bayesian Nash equilibrium in the family of equilibria
with semi-continuously bounded generators. \end{corollary}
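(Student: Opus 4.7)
The corollary follows from combining Theorem \ref{thm:unique} with Lemma \ref{lem:semi_cont} and the negligibility result of Lemma \ref{lem:equal_null}. Let $\varphi$ be a semi-continuously bounded generator of an absolutely continuous Bayesian Nash equilibrium with associated process $(X_t,A_t)$. The first step is to observe that by Lemma \ref{lem:semi_cont} the process $(A_t)$ also satisfies \eqref{eqn:symm_A} with the lower semi-continuous envelope $\varphi_*$ as the right-hand side. Since conditions (i) and (ii) of Definition \ref{def:abs_cont_perfect_equil} depend only on $(A_t)$, and condition (iii) depends on the strategy profile only through $(A_t)$, the function $\varphi_*$ is itself a lower semi-continuous generator of the same Bayesian Nash equilibrium.

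In the second step I apply Theorem \ref{thm:unique} to the lower semi-continuous generator $\varphi_*$ to conclude $(\varphi_*)^*=\lambda$. At any $p\in(\Delta\varphi)^c$ the equality $\varphi_*(p)=\varphi^*(p)$ together with the pointwise sandwich $\varphi_*\le\varphi\le\varphi^*$ yields $\varphi_*(p)=\varphi(p)=\varphi^*(p)$. Because $\varphi^*$ is upper semi-continuous and dominates $\varphi_*$, the minimality property of $(\varphi_*)^*$ gives $(\varphi_*)^*(p)\le\varphi^*(p)=\varphi_*(p)$; combined with the trivial inequality $(\varphi_*)^*\ge\varphi_*$ this forces $(\varphi_*)^*(p)=\varphi_*(p)$. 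Consequently $\varphi(p)=\varphi_*(p)=(\varphi_*)^*(p)=\lambda(p)$ for every $p\notin\Delta\varphi$.

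The third step handles the exceptional set $\Delta\varphi$. By Definition \ref{def:semi-cont}, it is a countable union of graphs of the form $x=h(y)$ with $h$ of finite variation. For each such graph I apply Lemma \ref{lem:equal_null} to the \cadlag finite-variation process $t\mapsto h(Y(A_t))$, whose finite variation is inherited from the monotonicity of $Y(A_t)$ together with the bounded variation of $h$ on $[\theta_L,\theta_U]$. Summing over the countable union, the set of times at which $(X_t,Y(A_t))\in\Delta\varphi$ has zero Lebesgue measure, $\P_x$-a.s. Hence $A_t=a+\int_0^t\varphi(X_s,Y(A_s))\,ds=a+\int_0^t\lambda(X_s,Y(A_s))\,ds$, so $(A_t)$ is a Carath\'{e}odory solution to \eqref{eqn:ODE_A}. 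By Remark \ref{rem:carath} the equilibrium is therefore determined by $\lambda$, and the maximal Carath\'{e}odory solution is the one constructed in Proposition \ref{prop:solution_A}.

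The main obstacle is the second step, where $\varphi_*$ must be tied to $\lambda$ pointwise on the continuity set of $\varphi$; this relies on the somewhat delicate interplay between the upper and lower semi-continuous envelopes, which I resolve via the minimality of $(\varphi_*)^*$ among upper semi-continuous majorants of $\varphi_*$. The other technical point that requires attention is the legitimacy of applying Theorem \ref{thm:unique} to $\varphi_*$, which is ensured by Step 1 verifying that $\varphi_*$ remains a valid generator. Once these are in place, the contribution of the (possibly intricate) discontinuity set $\Delta\varphi$ to the dynamics of $(A_t)$ vanishes automatically by Lemma \ref{lem:equal_null}, and the remainder of the argument is essentially bookkeeping.
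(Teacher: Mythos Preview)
Your proof is correct and follows the same route as the paper: pass from $\varphi$ to its lower semi-continuous envelope $\varphi_*$ via Lemma \ref{lem:semi_cont}, apply Theorem \ref{thm:unique} to obtain $(\varphi_*)^*=\lambda$, and conclude that $(A_t)$ satisfies \eqref{eqn:symm_A} with $\lambda$. Your Step 2, which establishes $\varphi=\lambda$ on $(\Delta\varphi)^c$ via the minimality of $(\varphi_*)^*$ among upper semi-continuous majorants of $\varphi_*$, supplies detail that the paper's two-line proof leaves implicit; the paper writes ``$\varphi^*=\lambda$'' where the theorem strictly yields $(\varphi_*)^*=\lambda$, and your argument bridges that gap cleanly.
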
 
\begin{proof}
Let $\varphi$ be a semi-continuously bounded generator of an absolutely
continuous Bayesian Nash equilibrium. By applying Theorem \ref{thm:unique}
to $\varphi_{*}$, we have $\varphi^{*}=\lambda$. Furthermore, Lemma
\ref{lem:semi_cont} implies that the process $(A_{t})$ satisfies
\eqref{eqn:symm_A} with $\lambda$ as well. Noting that $(A_{t})$
itself determines player's strategies, we obtain uniqueness. 
\end{proof}
The above corollary is the main uniqueness result of the paper. We
argue that the assumption of semi-continuous boundedness of a generator
is quite natural. As remarked, the function $\lambda$ generating
the equilibrium of the previous section is semi-continuously bounded
with the jump set $\Delta\lambda$ consisting of only the graph of the stopping
boundary of a single player problem. As the generator $\varphi$ from
\eqref{eqn:symm_A} has the interpretation of the intensity of exiting
of an opponent, it is unlikely that one can explicitly construct $\varphi$ with
a more complex jump set $\Delta\varphi$ than stipulated in Definition
\ref{def:semi-cont}, so our result may be viewed as a guarantee that
there will be no other \emph{explicitly} constructed absolutely continuous
symmetric  Bayesian Nash equilibrium in the problem.

The remaining of this section is divided into two parts. In the first
part, we define the action sets for the best response
problem and establish their properties. The second part is devoted to the derivation of an upper
and lower bounds for $\varphi$ and its upper semi-continuous envelope
and the proof of Theorem \ref{thm:unique}. }

\subsection{Properties of action sets for best response problems}

\edt{Before proceeding further, we recall properties of lower semi-continuous
functions. \begin{lemma}{\cite[p. 51]{Royden1988}}\label{lem:semicont}
The following hold: 
\begin{enumerate}
\item Function $\varphi$ is lower semi-continuous iff $\liminf_{(x',y')\to(x,y)}\varphi(x',y')\ge\varphi(x,y)$
for any $(x,y)\in\cI\times[\theta_{L},\theta_{U}]$; 
\item Function $\varphi$ is lower semi-continuous iff the set $\{(x,y)\in\cI\times[\theta_{L},\theta_{U}]:\ \varphi(x,y)>z\}$
is open in $\cI\times[\theta_{L},\theta_{U}]$ (i.e., its complement
is closed) for any $z\in\mathbb{R}$. 
\item Function $\varphi$ is upper semi-continuous iff $(-\varphi)$ is
lower semi-continuous. 
\end{enumerate}
\end{lemma}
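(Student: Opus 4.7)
The plan is to treat this as a standard real-analysis lemma and prove each of the three equivalences directly from the definition of lower semi-continuity. Recall that, by definition, $\varphi$ is lower semi-continuous at $(x,y)$ if for every $\varepsilon>0$ there is a neighbourhood $U$ of $(x,y)$ with $\varphi(x',y')>\varphi(x,y)-\varepsilon$ for all $(x',y')\in U$, and lower semi-continuous on a set if this holds at every point. I would carry the three items out in the stated order, as (2) is most conveniently deduced from (1) and (3) is a formal consequence of (1).

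For (1), I would prove both implications by unwinding the definition. If $\varphi$ is lower semi-continuous at $(x,y)$, then for each $\varepsilon>0$ there is a neighbourhood $U$ such that $\varphi(x',y')>\varphi(x,y)-\varepsilon$ on $U$, which gives $\liminf_{(x',y')\to(x,y)}\varphi(x',y')\ge\varphi(x,y)-\varepsilon$; letting $\varepsilon\downarrow0$ yields the liminf inequality. Conversely, if the liminf inequality holds, then for every $\varepsilon>0$ the definition of liminf yields a neighbourhood $U$ on which $\varphi(x',y')>\varphi(x,y)-\varepsilon$, which is exactly lower semi-continuity at $(x,y)$.

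For (2), I would argue via the superlevel sets $L_z:=\{\varphi>z\}$. Assuming $\varphi$ is lower semi-continuous, pick $(x,y)\in L_z$, set $\varepsilon:=\tfrac12(\varphi(x,y)-z)>0$, and use (1) to obtain a neighbourhood $U$ of $(x,y)$ with $\varphi(x',y')>\varphi(x,y)-\varepsilon>z$, showing $U\subset L_z$, hence $L_z$ is open. Conversely, if each $L_z$ is open, fix $(x,y)$ and $\varepsilon>0$; then $L_{\varphi(x,y)-\varepsilon}$ is an open set containing $(x,y)$, so there is a neighbourhood $U\subset L_{\varphi(x,y)-\varepsilon}$ of $(x,y)$ on which $\varphi>\varphi(x,y)-\varepsilon$, giving lower semi-continuity at $(x,y)$ via (1).

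For (3), I would use the identity $\limsup_{(x',y')\to(x,y)}\varphi(x',y')=-\liminf_{(x',y')\to(x,y)}(-\varphi)(x',y')$, combined with the dual definition of upper semi-continuity ($\limsup_{(x',y')\to(x,y)}\varphi(x',y')\le\varphi(x,y)$) and part (1) applied to $-\varphi$. The two conditions are then algebraically equivalent. None of the steps present genuine obstacles, as this is a classical result from real analysis; the only thing to be careful about is that the topology on the domain $\cI\times[\theta_L,\theta_U]$ (with $\cI$ open and the second factor compact) is the ordinary Euclidean subspace topology, which it is, so the standard neighbourhood/limit characterisations apply verbatim.
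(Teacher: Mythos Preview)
Your proof is correct and entirely standard. Note, however, that the paper does not actually prove this lemma: it is stated with a citation to \cite[p.~51]{Royden1988} and left without proof, as it is a classical fact from real analysis. So there is no ``paper's own proof'' to compare against; your argument simply supplies the textbook details that the authors chose to omit by reference.
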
}

Throughout the remaining of this section, we assume that $(A_{t})_{t\ge0}$
given by (\ref{eqn:symm_A}) is the process that characterises a \emph{lower
semi-continuous} absolutely continuous symmetric Bayesian Nash
equilibrium $(\tau_{1},\tau_{2})$. For $\theta\in[\theta_{L},\theta_{U}]$
and $a\le A(\theta)$, define $v(x,a;\theta)$ by \eqref{eqn:def_v}
with $\lambda$ replaced by $\varphi$. In the remainder of this section,
we will refer to equations from Section \ref{sec:equil} without further
mentioning that $\lambda$ is to be replaced by $\varphi$. The first
key step is proving a converse of Corollary \ref{corr:equil}. \begin{proposition}\label{prop:converse}
The stopping time 
\begin{equation}
\tau_{\theta}=\inf\{t\ge0:A_{t}>A(\theta)\},\label{eqn:tau_theta}
\end{equation}
with $A_{0}=a$ and the dynamics \eqref{eqn:symm_A}, is an optimal
stopping time for $v(x,a;\theta)$ for $\theta\in(\theta_{L},Y(a)]$.
\end{proposition}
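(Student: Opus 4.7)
My plan is to derive the optimality of $\tau_\theta$ from the Nash equilibrium property in Definition \ref{def:abs_cont_perfect_equil}(iii) by integrating out player~$1$'s type, deducing almost-everywhere optimality, and then extending to every $\theta \in (\theta_L, Y(a)]$ by a continuity argument. Following the proof of Corollary \ref{corr:equil} in reverse, for any measurable deviation $\tau_1' = \hat\tau_1'(\cdot, \hat\theta_1)$ of player~$1$, independence of $\tau_2$ from $\hat\theta_1$ gives $J_1(x, \tau_1', \tau_2) = \int_{\theta_L}^{Y(a)} J(x, \hat\tau_1'(\cdot, \theta), \tau_2; \theta)\, dF_{|Y(a)}(\theta)$, and similarly for $\tau_1$ from \eqref{eqn:tau_A_a}. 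Adapting Lemma \ref{lem:J_int} to the conditional distribution $F_{|Y(a)}$ and to the initial value $A_0 = a$ (the same computation appears in the proof of Lemma \ref{lem:v_ge_0}), the best response rewrites as the Markov optimal stopping problem
\[
\sup_{\sigma \in \cT(\cF_t)} J(x, \sigma, \tau_2; \theta) = e^{a}\, v(x, a; \theta),
\]
where $v$ is as in \eqref{eqn:def_v} with $\lambda$ replaced by the generator $\varphi$.

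Picking a measurable $\varepsilon$-optimal selection $\hat\tau_1'(\cdot, \theta)$ for $J(x, \cdot, \tau_2; \theta)$ and letting $\varepsilon \downarrow 0$, the Nash inequality forces the integrated equality
\[
\int_{\theta_L}^{Y(a)} J(x, \tau_\theta, \tau_2; \theta)\, dF_{|Y(a)}(\theta) = \int_{\theta_L}^{Y(a)} e^{a} v(x, a; \theta)\, dF_{|Y(a)}(\theta),
\]
and the pointwise bound $J(x, \tau_\theta, \tau_2; \theta) \le e^{a} v(x, a; \theta)$ then upgrades this to $J(x, \tau_\theta, \tau_2; \theta) = e^{a} v(x, a; \theta)$ for $F_{|Y(a)}$-almost every $\theta$ in $(\theta_L, Y(a)]$, so $\tau_\theta$ is optimal on a subset of full Lebesgue measure.

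To promote almost-everywhere optimality to every $\theta^\star \in (\theta_L, Y(a)]$, I would use that $\theta \mapsto v(x, a; \theta)$ is convex (a supremum of affine functions of $\theta$) and hence continuous, and approximate $\theta^\star$ from below by a sequence $\theta_n \uparrow \theta^\star$ lying in the set of a.e.\ optimality. The monotonicity $A(\theta_n) \downarrow A(\theta^\star)$ combined with continuity of $t \mapsto A_t$ gives $\tau_{\theta_n} \downarrow \tau_{\theta^\star}$ pointwise $\P_{xa}$-a.s., and bounded convergence, using boundedness of $D$, $M$, and $m$, transfers the equality $J = e^{a} v$ from $\theta_n$ to $\theta^\star$. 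The main obstacle is the measurable selection underlying the second step, which is standard but requires care; a secondary technical point is the pathwise convergence $\tau_{\theta_n} \to \tau_{\theta^\star}$ from the left, which should be treated by noting that $A_t$ is continuous and $A(\theta_n) \downarrow A(\theta^\star)$ strictly, so $\tau_{\theta_n}$ cannot bypass the level $A(\theta^\star)$ without converging to $\tau_{\theta^\star}$.
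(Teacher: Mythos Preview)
Your proposal is correct and shares with the paper the two essential ingredients: the identification $\sup_\sigma J(x,\sigma,\tau_2;\theta) = e^{a}\, v(x,a;\theta)$ and the left-continuity of $\theta \mapsto \tau_\theta$. The route, however, is different. You first establish optimality of $\tau_\theta$ for $F_{|Y(a)}$-almost every $\theta$ via a measurable $\varepsilon$-optimal selection, and then extend to every $\theta^\star$ by taking $\theta_n \uparrow \theta^\star$ inside the a.e.\ set and passing to the limit. The paper instead argues directly by contradiction: if $\tau_{\hat\theta}$ were suboptimal at a single $\hat\theta$, then by left-continuity of $\theta \mapsto J(x,\tau_\theta,\tau_2;\theta)$ it is suboptimal on an interval $[\hat\theta-\delta',\hat\theta]$, on which a \emph{single} near-optimal $\sigma^*$ for $\hat\theta$ (extended by continuity of $\theta \mapsto J(x,\sigma^*,\tau_2;\theta)$) furnishes a strictly better response, contradicting the Nash property. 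The paper's approach is more elementary because it sidesteps the measurable-selection issue entirely---the deviation is built from one fixed stopping time---whereas your approach, while conceptually clean, carries that selection step as a genuine technical burden (measurable selection of $\varepsilon$-optimal stopping times in a parameter is not completely routine). On the other hand, your decomposition into ``Nash $\Rightarrow$ a.e.\ best response'' followed by ``a.e.\ $\Rightarrow$ everywhere via regularity'' is a pattern that may transfer more readily to settings where the single-$\sigma^*$ trick is unavailable.
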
 
\begin{proof}
Fix $a\ge0$ and consider an equilibrium $(\tau_{1},\tau_{2})$ in
Definition \ref{def:abs_cont_perfect_equil}(iii). The decomposition
of the stopping time $\tau_{i}$ from Proposition \ref{prop:structure}
is $\hat{\tau}_{i}(\omega,\theta)=\tau_{\theta}(\omega)$ with $\tau_{\theta}$
from \eqref{eqn:tau_theta}. We further have 
\[
J_{1}(x,\tau_{1},\tau_{2})=\int_{\theta_{L}}^{Y(a)}J(x,\tau_{\theta},\tau_{2};\theta)F_{|Y(a)}(d\theta),
\]
where $J(x,\sigma,\tau_{2};\theta)$ is defined in \eqref{eqn:J}.
Using representation \eqref{eqn:J_int} of $J$ and taking into account
that $A_{0}=a\ge0$ while Lemma \ref{lem:J_int} assumed $a=0$, we
have 
\begin{equation}
J(x,\sigma,\tau_{i};\theta)=e^{a}\E_{x}\Big[\int_{0}^{\sigma}e^{-rs-A_{s}}D(X_{s})ds+\theta e^{-r\sigma-A_{\sigma-}}+\int_{[0,\sigma)}e^{-rs-A_{s}}m(X_{s})dA_{s}\Big].\label{eqn:J_int_a}
\end{equation}
Hence, from \eqref{eqn:def_v} 
\[
v(x,a;\theta)=\sup_{\sigma\in\cT(\cF_{t})}e^{-a}J(x,\sigma,\tau_{2};\theta),\qquad\theta\in[\theta_{L},Y(a)].
\]
The proof will now follow by contradiction. Assume that there is $\hat{\theta}\in(\theta_{L},Y(a)]$
such that 
\begin{equation}
J(x,\tau_{\hat{\theta}},\tau_{2};\hat{\theta})\le e^{a}v(x,a;\hat{\theta})-\ve\label{eqn:J_eps_bound}
\end{equation}
for some $\ve>0$. Notice that the mapping $(t,\theta)\mapsto J(x,t,\tau_{2};\theta)$
is continuous due to the continuity of $A_{t}$, see \eqref{eqn:J_int_a}.
We also have that the mapping $\theta\mapsto\tau_{\theta}$ is left-continuous.
Indeed, take a sequence $\theta_{n}\uparrow\theta$ and fix $\omega\in\Omega$;
we will argue pointwise. Fix any $t>\tau_{\theta}(\omega)$. We have
$A_{t}(\omega)>A(\theta)$. Using that $A(\theta_{n})\downarrow A(\theta)$,
there is $k$ such that $A_{t}(\omega)>A(\theta_{k})$, which implies
$t>\tau_{\theta_{k}}(\omega)\ge\inf_{n}\tau_{\theta_{n}}(\omega)=\lim_{n\to\infty}\tau_{\theta_{n}}(\omega)$,
where the last equality follows from the fact that the sequence $\tau_{\theta_{n}}(\omega)$
is decreasing in $n$. From the arbitrariness of $t$, we obtain that
$\tau_{\theta}(\omega)\ge\lim_{n\to\infty}\tau_{\theta_{n}}(\omega)$.
The opposite inequality is obvious as $A(\theta_{n})>A(\theta)$.

The above two observations as well as the boundedness of the terms
under the integrals in \eqref{eqn:J_int_a} imply that 
\[
\theta\mapsto J(x,\tau_{\theta},\tau_{2};\theta)
\]
is left-continuous. Hence, there is $\delta>0$ such that $\hat{\theta}-\delta\ge\theta_{L}$
and $J(x,\tau_{\theta},\tau_{2};\theta)\le e^{a}v(x,a;\theta)-\ve/2$
for $\theta\in[\hat{\theta}-\delta,\hat{\theta}]$. This will allow
us to improve the equilibrium strategy $\tau_{1}$ defined in \eqref{eqn:tau_A_a}
and lead to a contradiction. Take $\sigma^{*}\in\cT(\cF_{t})$ such
that $J(x,\sigma^{*},\tau_{2};\hat{\theta})>e^{a}v(x,a;\hat{\theta})-\ve/4$.
By the continuity of $\theta\mapsto J(x,\sigma^{*},\tau_{2};\theta)$,
there is $\delta'\in(0,\delta)$ such that 
\[
J(x,\sigma^{*},\tau_{2};\theta)>J(x,\tau_{\theta},\tau_{2};\theta),\qquad\theta\in[\hat{\theta}-\delta',\hat{\theta}].
\]
Hence the strategy $\tau_{1}'=\hat{\tau}(\cdot,\theta_{1})$ (c.f.
Proposition \ref{prop:structure}) with 
\[
\hat{\tau}(\cdot,\theta)=\begin{cases}
\sigma^{*}, & \theta\in[\hat{\theta}-\delta',\hat{\theta}],\\
\tau_{\theta}, & \text{otherwise},
\end{cases}
\]
is a strictly better response to $\tau_{2}$ than $\tau_{1}=\tau_{\theta_{1}}$,
which contradicts that $(\tau_{1},\tau_{2})$ given by \eqref{eqn:tau_A_a}
is an absolutely continuous Bayesian Nash equilibrium, contradicting
Definition \ref{def:abs_cont_perfect_equil}(iii). 
\end{proof}

For $\theta\in[\theta_{L},\theta_{U}]$, denote $S_{\theta}=\{x\in\cI:\P_{xA(\theta)}(\tau_{\theta}=0)=1\}$
\edt{and $\lambdaSet_{\theta}=\{x\in\cI:\varphi(x,\theta)>0\}$, which
is an open set by Lemma \ref{lem:semicont}.} Recall that by the
0-1 law, $\P_{xA(\theta)}(\tau_{\theta}=0)\in\{0,1\}$, so on the
complement of $S_{\theta}$ we have $\P_{xA(\theta)}(\tau_{\theta}>0)=1$. \edt{We will call $S_\theta$ \emph{the action set} for reasons explained in the remark below.}

\begin{remark} \edt{We cannot assume that the stopping time defined in \eqref{eqn:tau_A} coincides with the first hitting time of the stopping set on which the value function $v(x,a;\theta)$ coincides with the payoff as there may be many optimal stopping times; the aforementioned hitting time is the smallest of them. Therefore, the optimality of the stopping rule \eqref{eqn:tau_A} does not determine the stopping set. This motivates our less direct approach and the introduction of the set $S_{\theta}$ of those values of $x,a,\theta$ for which the stopping
rule \eqref{eqn:tau_A} stops immediately with probability one.}
\end{remark} 

\edt{ \begin{lemma}\label{lem:incl}
We have $\lambdaSet_{\theta}\subset S_{\theta}.$
\end{lemma}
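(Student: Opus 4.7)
The plan is to exploit the lower semi-continuity of $\varphi$ together with the continuity of the processes $(X_t)$ and $(A_t)$ to show that, starting from $(x, A(\theta))$ with $\varphi(x,\theta) > 0$, the process $(A_t)$ exceeds $A(\theta)$ at arbitrarily small times almost surely.

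First I would fix $x \in \lambdaSet_\theta$ so that $\varphi(x,\theta) > 0$, and pick any $z$ with $0 < z < \varphi(x,\theta)$. Since $\varphi: \cI \times [\theta_L, \theta_U] \to [0,\infty)$ is lower semi-continuous, Lemma \ref{lem:semicont}(2) yields an open neighbourhood $U$ of $(x,\theta)$ in $\cI \times [\theta_L, \theta_U]$ on which $\varphi \ge z$.

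Next, I would consider the process $(X_t, A_t)$ with initial condition $(x, A(\theta))$, so that $Y(A_0) = Y(A(\theta)) = \theta$. Because $(X_t)$ is continuous, $(A_t)$ is absolutely continuous (hence continuous), and $Y$ is continuous, the process $(X_t, Y(A_t))$ is continuous and starts at the interior point $(x,\theta) \in U$. Consequently the first exit time
\[
\eta := \inf\bigl\{t \ge 0 : (X_t, Y(A_t)) \notin U \bigr\}
\]
satisfies $\eta > 0$, $\P_{x A(\theta)}$-a.s. Using \eqref{eqn:symm_A}, for every $t \in [0,\eta)$ we obtain
\[
A_t - A(\theta) = \int_0^t \varphi(X_s, Y(A_s)) \, ds \ge z\, t.
\]
In particular, $A_t > A(\theta)$ for every $t \in (0, \eta)$, which means $\tau_\theta \le t$. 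Letting $t \downarrow 0$ gives $\tau_\theta = 0$, $\P_{x A(\theta)}$-a.s., and therefore $x \in S_\theta$.

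There is no substantial obstacle; the only points requiring care are the use of lower semi-continuity to produce the neighbourhood $U$ and the verification that $(X_t, Y(A_t))$ starts at $(x,\theta)$ and is continuous, so that $\eta$ is strictly positive almost surely. Once these are in place, the pathwise lower bound on the integral of $\varphi$ along the trajectory immediately gives $\tau_\theta = 0$.
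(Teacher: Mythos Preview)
Your proof is correct and follows exactly the approach the paper takes: the paper's proof is a one-line remark that $\varphi>0$ in an open neighbourhood of $(x,\theta)$ by lower semi-continuity, leaving the consequence $\tau_\theta=0$ implicit, and you have simply spelled out the details (the choice of $z$, the exit time $\eta$, and the integral lower bound $A_t - A(\theta) \ge zt$).
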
 
\begin{proof}
If $\varphi(x,\theta)>0$, then the first inclusion follows from the
fact that $\varphi>0$ in an open neighbourhood of $(x,\theta)$ by
Lemma \ref{lem:semicont}.
\end{proof}
}

\begin{lemma}\label{lem:S_theta_closed} Set $S_{\theta}$ is closed
in $\cI$. \end{lemma}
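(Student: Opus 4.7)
My plan is to identify $S_\theta$ with the closure of $\lambdaSet_\theta$ in $\cI$. Closedness of $S_\theta$ in $\cI$ then follows immediately, since $\lambdaSet_\theta = \{x \in \cI : \varphi(x,\theta) > 0\}$ is open in $\cI$ as the section at $y=\theta$ of the open set $\{\varphi > 0\} \subset \cI \times [\theta_L, \theta_U]$ (using the lower semi-continuity of $\varphi$, see Lemma \ref{lem:semicont}). The identification is proved by two inclusions.

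For $S_\theta \subset \overline{\lambdaSet_\theta} \cap \cI$, I argue by contrapositive. Take $x \in \cI$ with $x \notin \overline{\lambdaSet_\theta}$. Then $x$ lies in the open set $\cI \setminus \overline{\lambdaSet_\theta}$, so by continuity of the trajectories of $(X_t)$ there is, $\P_x$-a.s., an $\eta > 0$ such that $X_s \in \cI \setminus \overline{\lambdaSet_\theta}$ for every $s \in [0,\eta]$. On that interval $\varphi(X_s,\theta)=0$, so, noting that $A_s = A(\theta) + \int_0^{s \wedge \tau_\theta} \varphi(X_u, Y(A_u))\,du$ and that $Y(A_u)=\theta$ for $u < \tau_\theta$ while $A_0 = A(\theta)$, we get $A_s = A(\theta)$ on $[0,\eta]$, hence $\tau_\theta \ge \eta > 0$ a.s., so $x \notin S_\theta$.

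For the reverse inclusion $\overline{\lambdaSet_\theta} \cap \cI \subset S_\theta$, Lemma \ref{lem:incl} already gives $\lambdaSet_\theta \subset S_\theta$, so the work is at boundary points $x \in (\overline{\lambdaSet_\theta} \setminus \lambdaSet_\theta) \cap \cI$. Such an $x$ is an accumulation point of the open set $\lambdaSet_\theta$. The key step is to invoke that for the one-dimensional non-degenerate diffusion of Assumption \ref{ass:coeff}, every accumulation point of an open set $B \subset \cI$ is regular for $B$, i.e., $\P_x(\inf\{s>0 : X_s \in B\}=0)=1$. Applied to $B = \lambdaSet_\theta$, this yields that $(X_s)$ enters $\lambdaSet_\theta$ in every positive time interval a.s.; by the continuity of trajectories and the openness of $\lambdaSet_\theta$, the Lebesgue measure of $\{s \in [0,\varepsilon] : X_s \in \lambdaSet_\theta\}$ is strictly positive for every $\varepsilon>0$, so $\int_0^\varepsilon \varphi(X_s,\theta)\,ds > 0$ and therefore $\tau_\theta \le \varepsilon$ $\P_{xA(\theta)}$-a.s.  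Combined with the 0-1 law mentioned just before the lemma, this gives $\P_{xA(\theta)}(\tau_\theta=0)=1$, placing $x$ in $S_\theta$.

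The main obstacle I expect is justifying the regularity of boundary points of $\lambdaSet_\theta$ without any geometric regularity assumption on $\lambdaSet_\theta$ itself. This relies crucially on the locally Brownian-like behaviour of the 1D non-degenerate diffusion: from the continuity of $(X_t)$ and the non-degeneracy $b>0$ one can show (e.g.\ by a time change reducing $(X_t)$ locally to Brownian motion, or by an occupation-time argument using the formula \eqref{eqn:local_time} from the proof of Lemma \ref{lem:equal_null}) that $(X_t)$ oscillates on both sides of its starting point in every positive-length time interval and therefore visits any open set having that starting point as an accumulation point, with probability one.
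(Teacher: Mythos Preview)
Your argument for the inclusion $S_\theta \subset \overline{\lambdaSet_\theta}$ has a genuine gap. You claim that if $\varphi(X_s,\theta)=0$ for all $s\in[0,\eta]$ then $A_s=A(\theta)$ on $[0,\eta]$, but your justification only covers $s\le\tau_\theta$: on that interval indeed $Y(A_s)=\theta$ and the integrand vanishes. It does \emph{not} follow that $\tau_\theta\ge\eta$. For $s>\tau_\theta$ one has $Y(A_s)<\theta$, and $\varphi(X_s,Y(A_s))$ may well be strictly positive even though $\varphi(X_s,\theta)=0$; lower semi-continuity gives no control in that direction. Concretely, take $\varphi(x,y)=\ind{y<\theta}$, which is lower semi-continuous. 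Then $\lambdaSet_\theta=\emptyset$, yet $A_t=A(\theta)+t$ is a valid Carath\'eodory solution of \eqref{eqn:symm_A} with $A_0=A(\theta)$ (the integrand equals $1$ for a.e.\ $s>0$), giving $\tau_\theta=0$ from every $x$ and hence $S_\theta=\cI\not\subset\overline{\lambdaSet_\theta}$. The point is exactly the non-uniqueness of Carath\'eodory solutions emphasised in the paper: Definition~\ref{def:abs_cont_perfect_equil} only fixes \emph{some} strong Markov process satisfying \eqref{eqn:symm_A}, and nothing in your argument uses the equilibrium condition to rule out such a process.

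Your reverse inclusion $\overline{\lambdaSet_\theta}\cap\cI\subset S_\theta$ is correct, but on its own it does not yield closedness of $S_\theta$. The paper proceeds differently and avoids the identification with $\overline{\lambdaSet_\theta}$ altogether: given $x_n\in S_\theta$ with $x_n\to x$, it applies the strong Markov property of $(X_t,A_t)$ at the hitting time $\sigma_{x_n}$, using that from $(x_n,A_{\sigma_{x_n}})$ with $A_{\sigma_{x_n}}\ge A(\theta)$ one has $\tau_\theta=0$, to bound $\E_{xA(\theta)}[\tau_\theta\wedge 1]\le\ve+\P_x(\sigma_{x_n}>\ve)$. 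Letting $n\to\infty$ and then $\ve\downarrow 0$ gives the contradiction $\tau_\theta=0$. Note that this argument works directly with the definition of $S_\theta$ and the strong Markov property, and is indifferent to which Carath\'eodory solution the process follows.
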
 
\begin{proof}
Assume that $S_{\theta}$ is not closed. There is $x\in\cI\setminus S_{\theta}$
such that $B_{\ve}(x)\cap S_{\theta}\ne\emptyset$ for all $\ve>0$,
where $B_{\ve}(x)=\{x'\in\cI:\ |x-x'|<\ve\}$. We can find a monotone
sequence $(x_{n})\subset S_{\theta}$ converging to $x$. We will
assume that the sequence is increasing; arguments for a decreasing
sequence are analogous. By the regularity of $(X_{t})$, we have $\P_{x}(\sigma_{(x_{L},x)}=0)=1$,
where we write $\sigma_{B}=\inf\{t\ge0:\ X_{t}\in B\}$ for a Borel
set $B\subset\cI$ and $\sigma_{z}$ when $B=\{z\}$. For any $\ve\in(0,1)$,
we have 
\begin{align*}
\E_{xA(\theta)}\big[\tau_{\theta}\wedge1\big] & =\E_{xA(\theta)}\big[\ind{\sigma_{x_{n}}\le\ve}(\tau_{\theta}\wedge1)+\ind{\sigma_{x_{n}}>\ve}(\tau_{\theta}\wedge1)\big]\\
 & \le\E_{xA(\theta)}\big[\ind{\sigma_{x_{n}}\le\ve}(\tau_{\theta}\wedge1)+\ind{\sigma_{x_{n}}>\ve}\big]\\
 & \le\E_{xA(\theta)}\big[\ind{\sigma_{x_{n}}\le\ve}\big(\ind{\tau_{\theta}<\sigma_{x_{n}}}\sigma_{x_{n}}+\ind{\tau_{\theta}\ge\sigma_{x_{n}}}(\tau_{\theta}\wedge1)\big)\big]+\P_{xA(\theta)}(\sigma_{x_{n}}>\ve)\\
 & \le\E_{xA(\theta)}\big[\ind{\sigma_{x_{n}}\le\ve}\big(\ind{\tau_{\theta}<\sigma_{x_{n}}}\ve+\ind{\tau_{\theta}\ge\sigma_{x_{n}}}(\sigma_{x_{n}}+\E_{x_{n}A_{\sigma_{x_{n}}}}[\tau_{\theta}]\big)\big]+\P_{xA(\theta)}(\sigma_{x_{n}}>\ve)\\
 & \le\ve\P_{xA(\theta)}(\sigma_{x_{n}}\le\ve)+\P_{xA(\theta)}(\sigma_{x_{n}}>\ve),
\end{align*}
where we used the strong Markov property of $(X_{t},A_{t})$ and $\E_{x_{n}A_{\sigma_{x_{n}}}}[\tau_{\theta}]=0$
since $x_{n}\in S_{\theta}$ and $A_{\sigma_{x_{n}}}\ge A(\theta)$.
By the dominated convergence theorem and the regularity of $(X_{t})$,
we have 
\[
\lim_{n\to\infty}\P_{xA(\theta)}(\sigma_{x_{n}}\le\ve)=1,\qquad\lim_{n\to\infty}\P_{xA(\theta)}(\sigma_{x_{n}}>\ve)=0.
\]
Inserting this into the above estimates gives $\E_{xA(\theta)}\big[\tau_{\theta}\wedge1\big]\le\ve$.
This means that $\E_{xA(\theta)}\big[\tau_{\theta}\wedge1\big]=0$
as $\ve\in(0,1)$ was arbitrary. We therefore conclude that $\P_{xA(\theta)}(\tau_{\theta}=0)=1$,
which contradicts the assumption that $x\notin S_{\theta}$. 
\end{proof}
\begin{lemma}\label{lem:tau_in_S} We have $\P_{xA(\theta)}(X_{\tau_{\theta}}\in S_{\theta}\text{ or }\tau_{\theta}=\infty)=1$.
\end{lemma}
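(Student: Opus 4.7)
My plan is to argue by strong Markov property at $\tau_\theta$, using the definition of $S_\theta$ and the 0--1 law to rule out the event
\[
B := \{\tau_\theta < \infty\} \cap \{X_{\tau_\theta} \notin S_\theta\}.
\]
The guiding intuition is that $\tau_\theta$ is by definition a time at which $(A_t)$ "starts exiting" the level $A(\theta)$, while the hypothesis $X_{\tau_\theta} \notin S_\theta$ says exactly that, if we restart the process at $(X_{\tau_\theta}, A(\theta))$, a strictly positive amount of time must elapse before $(A_t)$ exceeds $A(\theta)$; these are incompatible.

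First, I would record two elementary consequences of the definitions. Since $(A_t)$ is continuous (being absolutely continuous by Definition \ref{def:abs_cont_perfect_equil}(ii)) and $A_0 = A(\theta)$, on $\{\tau_\theta < \infty\}$ one has $A_{\tau_\theta} = A(\theta)$; otherwise continuity would force $A_t > A(\theta)$ slightly before $\tau_\theta$, contradicting the infimum in the definition of $\tau_\theta$. Also, by the very definition of infimum, for every $\omega \in \{\tau_\theta < \infty\}$, the shifted hitting time
\[
\tilde\tau(\omega) := \inf\{s \ge 0:\ A_{\tau_\theta + s}(\omega) > A(\theta)\}
\]
equals $0$.

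Second, I would invoke the strong Markov property of $(X_t, A_t)$ from Proposition \ref{prop:solution_A} at $\tau_\theta$. Since $S_\theta$ is closed by Lemma \ref{lem:S_theta_closed} and $X_{\tau_\theta}$ is $\cF_{\tau_\theta}$-measurable, $B \in \cF_{\tau_\theta}$. The shifted process $(X_{\tau_\theta + s}, A_{\tau_\theta + s})_{s \ge 0}$, conditional on $\cF_{\tau_\theta}$ and on $\{\tau_\theta < \infty\}$, has the law of $(X_s, A_s)_{s \ge 0}$ under $\P_{X_{\tau_\theta}, A(\theta)}$. In particular, the random variable $\tilde\tau$ has, given $\cF_{\tau_\theta}$, the same law as $\tau_\theta$ under $\P_{X_{\tau_\theta}, A(\theta)}$.

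Finally, I combine the two ingredients. On $\{X_{\tau_\theta} \notin S_\theta\}$, the definition of $S_\theta$ together with the Blumenthal 0--1 law (used in the text just before Lemma \ref{lem:S_theta_closed}) gives $\P_{X_{\tau_\theta}, A(\theta)}(\tau_\theta = 0) = 0$. Hence
\[
\P_{xA(\theta)}(B) \;=\; \E_{xA(\theta)}\bigl[\ind{B}\,\ind{\tilde\tau = 0}\bigr] \;=\; \E_{xA(\theta)}\bigl[\ind{B}\,\P_{X_{\tau_\theta}, A(\theta)}(\tau_\theta = 0)\bigr] \;=\; 0,
\]
where the first equality uses $\tilde\tau = 0$ on $B$, and the second uses strong Markov and $\cF_{\tau_\theta}$-measurability of $\ind{B}$. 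The main obstacle I anticipate is purely bookkeeping: verifying cleanly that $A_{\tau_\theta} = A(\theta)$ on $\{\tau_\theta<\infty\}$ (which relies on continuity of $(A_t)$) and making sure the conditional strong Markov identification of laws correctly transfers the definition of $S_\theta$ to the random starting point $X_{\tau_\theta}$.
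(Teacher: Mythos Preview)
Your argument is correct and follows essentially the same route as the paper: both proofs rest on (a) $A_{\tau_\theta}=A(\theta)$ on $\{\tau_\theta<\infty\}$ by continuity of $(A_t)$, (b) the observation that the post-$\tau_\theta$ hitting time of $\{A_t>A(\theta)\}$ vanishes on $\{\tau_\theta<\infty\}$, and (c) the strong Markov property combined with the definition of $S_\theta$. The only notable difference is in step (b): you deduce $\tilde\tau=0$ directly and pathwise from the definition of $\tau_\theta$ as an infimum, whereas the paper packages the same fact through the computation $\E_{xA(\theta)}[A_{\sigma_\theta}]=A(\theta)$ with $\sigma_\theta=\tau_\theta+\ind{\tau_\theta<\infty}\tau_\theta\circ\theta_{\tau_\theta}$; your version is slightly more direct. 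One minor correction: in this section the process $(X_t,A_t)$ is the generic one from Definition~\ref{def:abs_cont_perfect_equil}, so the strong Markov property should be cited from item (i) of that definition rather than from Proposition~\ref{prop:solution_A}.
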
 
\begin{proof}
Recall that the filtration $(\cF_{t})$ is right continuous. Define
$\sigma_{\theta}=\tau_{\theta}+\ind{\tau_{\theta}<\infty}\tau_{\theta}\circ\theta_{\tau_{\theta}}$,
where $\theta_{t}$ is the shift operator for the Markov process $(X_{t},A_{t})_{t\ge0}$.
This is a stopping time thanks to \cite[Chapter I, Thm.~8.7]{Blumenthal}.
We have $A_{\tau_{\theta}}=A(\theta)$ because of the continuity of
$(A_{t})$ and the definition of $\tau_{\theta}$. Combining this
with the strong Markov property of $(X_{t},A_{t})$ we can write 
\begin{align*}
\E_{xA(\theta)}[A_{\sigma_{\theta}}] & =\E_{xA(\theta)}\big[\ind{\tau_{\theta}=\infty}A(\theta)+\ind{\tau_{\theta}<\infty}\E_{X_{\tau_{\theta}}A_{\tau_{\theta}}}[A_{\tau_{\theta}}]\big]\\
 & =\E_{xA(\theta)}\big[\ind{\tau_{\theta}=\infty}A(\theta)+\ind{\tau_{\theta}<\infty}\E_{X_{\tau_{\theta}}A(\theta)}[A_{\tau_{\theta}}]\big]\\
 & =\E_{xA(\theta)}\big[\ind{\tau_{\theta}=\infty}A(\theta)+\ind{\tau_{\theta}<\infty}\E_{X_{\tau_{\theta}}A(\theta)}[A(\theta)]\big]=A(\theta).
\end{align*}
Hence, recalling the definition of $\tau_{\theta}$, this implies
$\P_{xA(\theta)}(\sigma_{\theta}>\tau_{\theta}\text{ and }\tau_{\theta}<\infty)=0$
and, consequently, $\P_{xA(\theta)}(\tau_{\theta}\circ\theta_{\tau_{\theta}}=0\,|\,\tau_{\theta}<\infty)=1$.
Notice now that $\{X_{\tau_{\theta}}\notin S_{\theta},\tau_{\theta}<\infty\}=\{\tau_{\theta}\circ\theta_{\tau_{\theta}}>0,\tau_{\theta}<\infty\}$.
As the latter event has probability zero, we conclude that $\P_{xA(\theta)}(X_{\tau_{\theta}}\notin S_{\theta}\,|\,\tau_{\theta}<\infty)=0$. 
\end{proof}
\begin{lemma}\label{lem:S_theta_interval} For any $\theta\in(\theta_{L},\theta_{U}]$,
we have $S_{\theta}=(x_{L},\alpha(\theta)]$. \end{lemma}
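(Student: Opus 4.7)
The proof establishes the two inclusions $S_\theta \subset (x_L, \alpha(\theta)]$ and $(x_L, \alpha(\theta)] \subset S_\theta$ separately. For the first inclusion, I adapt the proof of Lemma \ref{lem:v_ge_0}: that argument uses only $M \ge D$, $m > \theta_U$, and the representation \eqref{eqn:J_int} of $J$ for strategies of the form \eqref{eqn:tau_form}, so it carries over verbatim with $\varphi$ in place of $\lambda$. This yields $\tilde v(x, A(\theta); \theta) > 0$ and hence $v(x, A(\theta); \theta) > \theta e^{-A(\theta)}$ for every $x > \alpha(\theta)$. By Proposition \ref{prop:converse} applied with $a = A(\theta)$ so that $\theta \in (\theta_L, Y(a)]$, the stopping time $\tau_\theta$ is optimal for $v(x, A(\theta); \theta)$. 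If $\P_{xA(\theta)}(\tau_\theta = 0) = 1$, then $\tau_\theta$ would attain the strictly smaller value $\theta e^{-A(\theta)}$, contradicting optimality; since Blumenthal's 0-1 law gives $\P_{xA(\theta)}(\tau_\theta = 0) \in \{0,1\}$, we conclude $\P_{xA(\theta)}(\tau_\theta = 0) = 0$ for $x > \alpha(\theta)$, so $x \notin S_\theta$.

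For the reverse inclusion $(x_L, \alpha(\theta)] \subset S_\theta$, by closedness of $S_\theta$ (Lemma \ref{lem:S_theta_closed}) it suffices to prove $(x_L, \alpha(\theta)) \subset S_\theta$. Fix $x_0$ in the strict interior and suppose for contradiction that $x_0 \notin S_\theta$, i.e., $\P_{x_0 A(\theta)}(\tau_\theta > 0) = 1$. Since $(A_t)$ is non-decreasing with $A_0 = A(\theta)$ and $\tau_\theta = \inf\{t: A_t > A(\theta)\}$, we have $A_t = A(\theta)$ on $[0, \tau_\theta]$ and $dA_t \equiv 0$ there. Substituting into \eqref{eqn:def_v}, the value attained by $\tau_\theta$ (which equals $v(x_0, A(\theta); \theta)$ by Proposition \ref{prop:converse}) reduces to
\[
v(x_0, A(\theta); \theta) = e^{-A(\theta)} \E_{x_0}\!\Big[\int_0^{\tau_\theta} e^{-rs} D(X_s)\, ds + \theta\, e^{-r\tau_\theta}\Big].
\]
As $\tau_\theta$ is an $(\cF_t)$-stopping time admissible in the single-player problem \eqref{eq:U-theta} and $u(x_0; \theta) = \theta$ for $x_0 \le \alpha(\theta)$ by Lemma \ref{lemm:conti-alpha}, the expectation is bounded above by $\theta$, so $v(x_0, A(\theta); \theta) \le \theta e^{-A(\theta)}$. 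Combined with the trivial lower bound obtained from $\sigma = 0$, equality holds and $\tau_\theta$ is itself an optimal stopping time for $u(x_0; \theta)$.

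The contradiction comes from showing that at a strict interior point $x_0 \in (x_L, \alpha(\theta))$ the only optimal stopping time for $u(x_0; \theta)$ is $\sigma = 0$. By It\^o--Tanaka as in the derivation of \eqref{eqn:cl_X_ineq}, the process $M_t := u(X_t; \theta) e^{-rt} + \int_0^t e^{-rs} D(X_s)\, ds$ is a supermartingale whose bounded-variation part equals $\int_0^t \ind{X_s \le \alpha(\theta)} (D(X_s) - r\theta) e^{-rs}\, ds$. Optimality of $\sigma$ forces $\E[M_\sigma] = M_0$ and hence $\E\!\int_0^\sigma \ind{X_s \le \alpha(\theta)}(D(X_s) - r\theta) e^{-rs}\, ds = 0$. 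The integrand is $\le 0$ by Assumption \ref{assump:Li}(i) together with $\alpha(\theta) \le c(\theta)$ from Lemma \ref{lemm:conti-alpha}, and is strictly negative on $\{X_s \le \alpha(\theta),\; X_s < c(\theta)\}$; in the borderline case $\alpha(\theta) = c(\theta)$, Lemma \ref{lem:equal_null} removes the Lebesgue-null set $\{X_s = c(\theta)\}$. Since $x_0 < \alpha(\theta) \le c(\theta)$, continuity of $(X_t)$ yields $X_s \in (x_L, \alpha(\theta))$ on $[0, \rho)$ for some $\rho > 0$ $\P_{x_0}$-a.s., forcing $\sigma = 0$ $\P_{x_0}$-a.s. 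Applied to $\tau_\theta$, this contradicts $\P_{x_0 A(\theta)}(\tau_\theta > 0) = 1$, so $x_0 \in S_\theta$. The principal technical hurdle is pinning down that $\sigma = 0$ is the unique optimal stopping time for the single-player problem at interior points of the stopping region, especially handling the borderline case $\alpha(\theta) = c(\theta)$ via Lemma \ref{lem:equal_null}.
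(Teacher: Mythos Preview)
Your proof is correct, and the first inclusion $S_\theta\subset(x_L,\alpha(\theta)]$ is handled exactly as in the paper. For the reverse inclusion, however, you take a genuinely different route.

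The paper does not argue point by point. Instead it uses Lemma~\ref{lem:tau_in_S} (that $X_{\tau_\theta}\in S_\theta$ whenever $\tau_\theta<\infty$) to show that $S_\theta$ must be an interval: if there were $b<c$ with $[b,c]\cap S_\theta=\{b,c\}$, then for $x\in(b,c)$ one would have $\tau_\theta=\sigma_{b,c}$, and since $A$ is frozen on $[0,\sigma_{b,c}]$ the integrand in $\tilde v$ reduces to $D(X_s)-r\theta<0$, contradicting $\tilde v\ge0$. Analogous contradiction arguments then show $S_\theta\ne\emptyset$, $\inf S_\theta=x_L$, and $\sup S_\theta=\alpha(\theta)$, the last step using suboptimality of $\sigma_c$ for the single-player problem $\tilde u$.

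Your argument bypasses Lemma~\ref{lem:tau_in_S} entirely. You exploit directly that $A_t\equiv A(\theta)$ on $[0,\tau_\theta]$ to reduce the payoff attained by $\tau_\theta$ to the single-player functional, deduce that $\tau_\theta$ is optimal for $u(x_0;\theta)$, and then invoke uniqueness of the optimal stopping time at strict interior points of the single-player stopping set via the supermartingale decomposition. This is shorter and makes the link to the single-player problem more transparent; the price is that you need the auxiliary fact that $\sigma=0$ is the \emph{only} optimiser for $u$ at such points, which the paper never isolates. The paper's structural route, on the other hand, reuses Lemma~\ref{lem:tau_in_S} and yields the interval shape of $S_\theta$ as an intermediate result.
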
 
\begin{proof}
We first show that $S_{\theta}\cap(\alpha(\theta),x_{U})=\emptyset$.
Take $x>\alpha(\theta)$. By assumption $\tau_{\theta}$ is optimal
for $v(x,A(\theta);\theta)$, so also for $\tilde{v}(x,A(\theta);\theta)$
defined in \eqref{eqn:tilde_v}. Following arguments of Lemma \ref{lem:v_ge_0}
with $\lambda$ replaced by $\varphi$ we obtain $\tilde{v}(x,A(\theta);\theta)>0$.
Hence, stopping immediately is suboptimal, so $\tau_{\theta}>0$ $\P_{xA(\theta)}$-a.s.
and $x\notin S_{\theta}$.

Assume that there are $b,c\in\cI$, $b<c$, such that $[b,c]\cap S_{\theta}=\{b,c\}$.
Take any $x\in(b,c)$. Thanks to Lemma \ref{lem:tau_in_S}, we have
$\sigma_{b,c}\le\tau_{\theta}$, where $\sigma_{b,c}$ is the first
entry time to the set $\{b,c\}$. By the definition of $S_{\theta}$
we further have that $\sigma_{b,c}=\tau_{\theta}$. By the optimality
of $\sigma_{b,c}$ for $\tilde{v}(x,A(\theta);\theta)$ we have 
\begin{equation}
\tilde{v}(x,A(\theta);\theta)=\E_{x}\Big[\int_{0}^{\sigma_{b,c}}e^{-rs-A(\theta)}\big(D(X_{s})-r\theta\big)ds\Big]<0,\label{eqn:tl_v_negative}
\end{equation}
where the last inequality is because $X_{s}\le\alpha(\theta)\le c(\theta)$
(Lemma \ref{lemm:conti-alpha}) and hence $D(X_{s})-r\theta<0$ (Assumption
\ref{assump:Li}). This contradicts the lower bound $\tilde{v}\ge0$
which can be obtained by stopping immediately. This means that $S_{\theta}$
is either an empty set or an interval.

The set $S_{\theta}$ cannot be empty as by Lemma \ref{lem:tau_in_S}
that would mean $\tau_{\theta}=\infty$, $\P_{xA(\theta)}$-a.s. But
clearly the best response to an opponent who never stops is the optimal
stopping time from Section \ref{sec:single_player} which is not infinite
$\P_{x}$-a.s. Hence, in equilibrium $S_{\theta}\ne\emptyset$.

We shall prove that $x_{L}$ is the left endpoint of $S_{\theta}$.
Assume that $\inf S_{\theta}=:b>x_{L}$. Take any $x\in(x_{L},b)$.
As above, the stopping time $\tau_{b}$ is optimal for $\tilde{v}(x,A(\theta);\theta)$.
The estimate \eqref{eqn:tl_v_negative} with $\sigma_{b,c}$ replaced
by $\sigma_{b}:=\inf\{t\ge0:\ X_{t}=b\}$ holds true and contradicts
$\tilde{v}(x,A(\theta);\theta)\ge0$.

It remains to show that $\sup S_{\theta}=:c=\alpha(\theta)$. Assume,
by contradiction, that $c<\alpha(\theta)$ and take any $x\in(c,\alpha(\theta))$.
Then $\sigma_{c}$ is optimal for $\tilde{v}(x,A(\theta);\theta)$
and yields the payoff 
\[
e^{-A(\theta)}\E_{x}\Big[\int_{0}^{\sigma_{c}}e^{-rs}\big(D(X_{s})-r\theta\big)ds\Big]\le\sup_{\sigma\in\cT(\cF_{t})}e^{-A(\theta)}\E_{x}\Big[\int_{0}^{\sigma}e^{-rs}\big(D(X_{s})-r\theta\big)ds\Big]=e^{-A(\theta)}\tilde{u}(x;\theta),
\]
where $\tilde{u}(x;\theta)$ is defined in \eqref{eqn:tl_u}. Since
$x<\alpha(\theta)$ then $\tilde{u}(x;\theta)=0$. We will show that
the inequality above is strict, i.e., $\sigma_{c}$ is suboptimal
for $\tilde{u}(x;\theta)$. Assume optimality of $\sigma_{c}$ and
select $z_{1},z_{2}$ so that $c<z_{1}<x<z_{2}<\alpha(\theta)$. From
the dynamic programming principle for $\tilde{u}$ we obtain 
\[
\E_{x}\Big[\int_{0}^{\sigma_{c}}e^{-rs}\big(D(X_{s})-r\theta\big)ds\Big]\le\E_{x}\Big[\int_{0}^{\sigma_{z_{1},z_{2}}}e^{-rs}\big(D(X_{s})-r\theta\big)ds+e^{-r\sigma_{z_{1},z_{2}}}\tilde{u}(X_{\sigma_{z_{1},z_{2}}};\theta)\Big]<0,
\]
where we used that $\tilde{u}(X_{\sigma_{z_{1},z_{2}}};\theta)=0$
and the integrand is strictly negative for $X_{s}\le\alpha(\theta)$.
This contradicts that $\tilde{u}\ge0$. 
\end{proof}

\subsection{Properties of $\varphi$}

\edt{We start from an immediate corollary which follows by combining
Lemma \ref{lem:incl} and Lemma \ref{lem:S_theta_interval}.} \begin{corollary}\label{cor:varphi_zero}
$\varphi(x,y)=0$ for $y\in(\theta_{L},\theta_{U}]$ and $x>\alpha(y)$.
\end{corollary}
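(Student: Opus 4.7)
The plan is to derive this as an immediate consequence of the two lemmas just established. First, I would recall that the generator $\varphi$ is non-negative by Definition~\ref{def:abs_cont_perfect_equil}(ii), so it suffices to rule out the possibility that $\varphi(x,y)>0$ whenever $x>\alpha(y)$.

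Fix $y\in(\theta_L,\theta_U]$ and suppose, for contradiction, that $\varphi(x,y)>0$ at some point with $x>\alpha(y)$. By the definition of the set $\lambdaSet_y$, this says $x\in\lambdaSet_y$. Lemma~\ref{lem:incl} gives the inclusion $\lambdaSet_y\subset S_y$, so $x\in S_y$. But Lemma~\ref{lem:S_theta_interval} identifies $S_y=(x_L,\alpha(y)]$, which forces $x\le\alpha(y)$, contradicting the choice $x>\alpha(y)$. Hence $\varphi(x,y)=0$ for all such $(x,y)$.

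There is no real obstacle here: the work has been done in the two preceding lemmas. The only thing to double-check is that the statement about $y\in(\theta_L,\theta_U]$ (rather than the closed interval) matches the hypotheses of Lemma~\ref{lem:S_theta_interval}, which it does. No further probabilistic or analytic argument is needed; the corollary is purely a combinatorial consequence of the inclusion $\lambdaSet_y\subset S_y$ and the explicit description of $S_y$ as the interval $(x_L,\alpha(y)]$.
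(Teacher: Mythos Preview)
Your proof is correct and follows exactly the paper's approach: the corollary is stated as an immediate consequence of combining Lemma~\ref{lem:incl} (giving $\lambdaSet_y\subset S_y$) with Lemma~\ref{lem:S_theta_interval} (giving $S_y=(x_L,\alpha(y)]$), precisely as you argue.
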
 

\edt{We turn attention to upper and lower bounds for $\varphi$.}

\begin{lemma}\label{lem:phi_bound} We have $\varphi(x,y)\le\lambda(x,y)$
for $(x,y)\in\cI\times[\theta_{L},\theta_{U}]$. \end{lemma}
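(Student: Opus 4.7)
The plan is to use the optimality of immediate stopping at the state $(x, A(y))$ for $x \le \alpha(y)$ to extract a pointwise bound on the generator $\varphi$. The case $x > \alpha(y)$ is handled at once by Corollary \ref{cor:varphi_zero}, which yields $\varphi(x, y) = 0 = \lambda(x, y)$. I therefore concentrate on $x \le \alpha(y)$ with $y \in (\theta_L, \theta_U]$, deferring the degenerate boundary $y = \theta_L$ (where $A(\theta_L) = \infty$) to a semi-continuity argument at the end.

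By Lemma \ref{lem:S_theta_interval}, $x \in S_y$, so by the definition of $S_y$ we have $\tau_y = 0$ $\P_{x, A(y)}$-a.s., and Proposition \ref{prop:converse} (applied with $\theta = y$ and $a = A(y)$) tells us that $\tau_y$ is optimal for $v(x, A(y); y)$. Rewriting the functional via \eqref{eqn:integral_middle} with $\varphi$ in place of $\lambda$, the optimality of $\tau_y = 0$ applied against the deterministic stopping time $\sigma = h > 0$ gives
\[
0 \;\ge\; \E_{x, A(y)}\Big[\int_0^h e^{-rs - A_s}\big(D(X_s) - ry + \varphi(X_s, Y(A_s))(m(X_s) - y)\big)\, ds\Big]
\]
for every $h > 0$. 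After swapping expectation and time integral by Fubini, I would divide by $h$ and let $h \downarrow 0$, aiming to arrive at the pointwise inequality $D(x) - ry + \varphi(x, y)(m(x) - y) \le 0$, which rearranges to $\varphi(x, y) \le \lambda(x, y)$ on $\{x \le \alpha(y)\}$.

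The hard part is the passage to the limit because $\varphi$ is only lower semi-continuous. Pathwise, continuity of $(X_s)$ and $(A_s)$ at $s = 0$, together with the lower semi-continuity of $\varphi$, the non-negativity of $\varphi$, and the strict positivity of $m(x) - y$, should give
\[
\liminf_{s \to 0} e^{-rs - A_s}\big(D(X_s) - ry + \varphi(X_s, Y(A_s))(m(X_s) - y)\big) \;\ge\; e^{-A(y)}\big(D(x) - ry + \varphi(x, y)(m(x) - y)\big).
\]
I would then apply Fatou's lemma to push the inner liminf through the expectation; this is legitimate because the integrand is bounded below by $-ry$ (using $D, \varphi \ge 0$, $m - y > 0$, and $e^{-rs - A_s} \le 1$). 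A routine averaging fact---if $f$ is bounded below and $\liminf_{s \to 0}f(s) \ge c$, then $\liminf_{h \to 0}\frac{1}{h}\int_0^h f\, ds \ge c$---completes the derivation. For the boundary $y = \theta_L$, I would conclude by lower semi-continuity: $\varphi(x, \theta_L) \le \liminf_{y' \downarrow \theta_L}\varphi(x, y')$, and since $\alpha$ is strictly increasing we have $x \le \alpha(\theta_L) < \alpha(y')$ for $y' > \theta_L$ small, so the bound established in the main case applies and gives $\liminf_{y' \downarrow \theta_L}\varphi(x, y') \le \liminf_{y' \downarrow \theta_L}\lambda(x, y') = \lambda(x, \theta_L)$, using continuity of $\lambda(x, \cdot)$ from the right at $\theta_L$.
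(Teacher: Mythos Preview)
Your argument is correct and follows essentially the same route as the paper: deduce that immediate stopping is optimal at $(x,A(y))$ when $x\le\alpha(y)$, compare with the deterministic time $h$, divide by $h$, and pass to the limit via Fatou and the lower semi-continuity of $\varphi$. The paper carries out the identical computation, differing only cosmetically: it splits cases by whether $\varphi(x,y)>0$ (using Lemma~\ref{lem:incl} rather than Lemma~\ref{lem:S_theta_interval}), performs the change of variable $z=s/t$ instead of invoking your averaging fact, and handles the trivial case $\varphi(x,y)=0\le\lambda(x,y)$ in one line at the end rather than treating $y=\theta_L$ separately by semi-continuity.
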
 
\begin{proof}
Fix any $(x,\theta)$ such that $\varphi(x,\theta)>0$ and set $a=A(\theta)$.
We must have $x\le\alpha(\theta)$ thanks to Corollary \ref{cor:varphi_zero}.
From Lemma \ref{lem:incl}, we have that the optimal stopping time
$\tau_{\theta}$ for $\tilde{v}(x,a;\theta)$ satisfies $\tau_{\theta}=0$
$\P_{xa}$-a.s. Then, for any $t>0$, we have 
\[
\E_{xa}\Big[\int_{0}^{t}e^{-rs-A_{s}}\big(D(X_{s})-r\theta+\varphi(X_{s},Y(A_{s}))(m(X_{s})-\theta)\big)ds\Big]\le\tilde{v}(x,a;\theta)=0.
\]
\edt{We divide both sides by $t$ and change the variable of integration
to $z=s/t$: 
\[
\E_{xa}\Big[\int_{0}^{1}e^{-rtz-A_{tz}}\big(D(X_{tz})-r\theta+\varphi(X_{tz},Y(A_{tz}))(m(X_{tz})-\theta)\big)dz\Big]\le0.
\]
Since $D$ is bounded from below, $\varphi\ge0$ and $m>\theta$,
we can apply Fatou's lemma 
\begin{align*}
0 & \ge\liminf_{t\to0}\E_{xa}\Big[\int_{0}^{1}e^{-rtz-A_{tz}}\big(D(X_{tz})-r\theta+\varphi(X_{tz},Y(A_{tz}))(m(X_{tz})-\theta)\big)dz\Big]\\
 & \ge D(x)-r\theta+\varphi(x,Y(a))(m(x)-\theta),
\end{align*}
where in the last inequality follows from the continuity of trajectories
$(X_{t},A_{t})$, the continuity of functions $D$, $m$, $Y$, and
the lower semi-continuity of $\varphi$. The above inequality is equivalent
to $\varphi(x,\theta)\le\lambda(x,\theta)$, where we also used that
$x\le\alpha(\theta)$. The proof is concluded when we notice that
$\lambda\ge0$, so $\varphi(x,a)\le\lambda(x,a)$ when $\varphi(x,a)=0$.
}
\end{proof}
\begin{lemma}\label{lem:phi_lower} \edt{We have $\varphi^{*}(x,y)\ge\lambda(x,y)$
for $(x,y)\in\cI\times[\theta_{L},\theta_{U}]$.} \end{lemma}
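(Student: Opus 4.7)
The plan is to establish $\varphi^*(x_0,y_0)\ge\lambda(x_0,y_0)$ by comparing best responses for player types $\theta'$ slightly below $y_0$ with that of type $y_0$ at the initial state $(x_0,A(y_0))$, and then invoking the definition of the upper semi-continuous envelope to pass to the limit. The inequality is trivial if $\lambda(x_0,y_0)=0$, and by continuity of $\lambda$ on $\{x\le\alpha(y)\}$ together with the upper semi-continuity of $\varphi^*$, the boundary case $x_0=\alpha(y_0)$ reduces (via a sequence $y_n\downarrow y_0$, for which $x_0<\alpha(y_n)$) to the interior case $x_0<\alpha(y_0)$, which I now assume.

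Set $a=A(y_0)$. For $\theta'\in(\theta_L,y_0)$, Proposition \ref{prop:converse} gives that $\tau_{\theta'}$ is optimal for $\tilde v(x_0,a;\theta')$; since $A_0=a<A(\theta')$ and $(A_t)$ is continuous, $\tau_{\theta'}>0$ $\P_{x_0 a}$-a.s. Applying the same proposition to type $y_0$ and using Lemma \ref{lem:S_theta_interval} ($x_0\in S_{y_0}$) yields $\tau_{y_0}=0$ a.s.\ and $\tilde v(x_0,a;y_0)=0$; consequently $\tau_{\theta'}\downarrow 0$ a.s.\ as $\theta'\uparrow y_0$. Writing $\varphi(X_s,Y(A_s))\,ds=dA_s$ and using the pathwise identity $\int_0^{\tau_{\theta'}}dA_s=A(\theta')-a$, the inequalities $\tilde v(x_0,a;\theta')\ge 0$ and
\[
\E_{x_0 a}\Big[\int_0^{\tau_{\theta'}}e^{-rs-A_s}\big(D(X_s)-ry_0+\varphi(X_s,Y(A_s))(m(X_s)-y_0)\big)ds\Big]\le\tilde v(x_0,a;y_0)=0
\]
give two-sided estimates which, after dividing by $\E_{x_0 a}[\tau_{\theta'}]$ and passing to the limit using continuity of $D$, $m$ and $X$ at $(x_0,y_0)$ and $\tau_{\theta'}\to 0$, combine to yield
\[
\lim_{\theta'\uparrow y_0}\frac{A(\theta')-a}{\E_{x_0 a}[\tau_{\theta'}]}=\lambda(x_0,y_0).
\]

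By the definition of $\varphi^*$, for any $\epsilon>0$ there is a ball $B_\delta(x_0,y_0)$ on which $\varphi\le\varphi^*(x_0,y_0)+\epsilon$. From the identity $A(\theta')-a=\E_{x_0 a}\big[\int_0^{\tau_{\theta'}}\varphi(X_s,Y(A_s))\,ds\big]$, splitting the integral at the exit time $\sigma':=\inf\{s\ge 0:(X_s,Y(A_s))\notin B_\delta\}$, which is $\P_{x_0 a}$-a.s.\ strictly positive by continuity of trajectories, yields
\[
\frac{A(\theta')-a}{\E_{x_0 a}[\tau_{\theta'}]}\le\varphi^*(x_0,y_0)+\epsilon+\lambda_{\max}\,\frac{\E_{x_0 a}[\ind{\sigma'<\tau_{\theta'}}(\tau_{\theta'}-\sigma')]}{\E_{x_0 a}[\tau_{\theta'}]},
\]
where $\lambda_{\max}$ is the uniform bound on $\varphi$ coming from Lemma \ref{lem:phi_bound} and Lemma \ref{lem:lambda}. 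The last ratio vanishes as $\theta'\uparrow y_0$ since $\tau_{\theta'}\to 0$ a.s., $\sigma'>0$ a.s., and the monotonicity $\tau_{\theta'}\le\tau_{\theta_0}$ for a fixed $\theta_0<y_0$ provides integrable domination. Combining with the preceding limit gives $\lambda(x_0,y_0)\le\varphi^*(x_0,y_0)+\epsilon$, and $\epsilon\downarrow 0$ concludes.

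The principal obstacle is the rigorous passage to the limit as $\theta'\uparrow y_0$: one must ensure that $\E_{x_0 a}[\tau_{\theta'}]$ is strictly positive (so the ratios are well-defined), and that both the central limit identification and the vanishing of the error term can be justified by dominated/Fatou-type arguments. These rely on the boundedness $\varphi\le\lambda_{\max}$ (which bounds $A(\theta')-a$ above by $\lambda_{\max}\E[\tau_{\theta'}]$), the monotonicity of $\tau_{\theta'}$ in $\theta'$ and the strict positivity of $\sigma'$ guaranteed by continuity of $(X_t,A_t)$.
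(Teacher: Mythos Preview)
Your overall strategy---comparing the best response of type $\theta'\uparrow y_0$ with that of type $y_0$ at the state $(x_0,A(y_0))$, dividing by a vanishing time scale, and taking limits---is the same as the paper's. The gap is in the passage to the limit, and it is not a mere matter of tidying up dominated convergence.

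The crucial claim that
\[
\frac{\E_{x_0 a}\big[\ind{\sigma'<\tau_{\theta'}}(\tau_{\theta'}-\sigma')\big]}{\E_{x_0 a}[\tau_{\theta'}]}\longrightarrow 0
\]
is not justified by the argument you give. Dominated convergence (via $\tau_{\theta'}\le\tau_{\theta_0}$) yields only that both the numerator and the denominator tend to $0$; it says nothing about their ratio. If, say, the expectation $\E[\tau_{\theta'}]$ is dominated by a rare event on which $\tau_{\theta'}$ stays of order one (which is not excluded a priori when $\varphi$ can be arbitrarily small near $(x_0,y_0)$), then the ratio can stay bounded away from zero. Exactly the same difficulty undermines your ``limit identification'' $\lim_{\theta'\uparrow y_0}\frac{A(\theta')-a}{\E[\tau_{\theta'}]}=\lambda(x_0,y_0)$: to replace $D(X_s),m(X_s)$ by $D(x_0),m(x_0)$ inside $\int_0^{\tau_{\theta'}}\cdots\,ds$ after dividing by $\E[\tau_{\theta'}]$ you again need the contribution from $[\sigma',\tau_{\theta'}]$ to be $o(\E[\tau_{\theta'}])$.

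The paper closes this gap by a two–step scheme you do not have. First, it restricts to points with $\varphi(x_0,y_0)>0$; lower semi-continuity then gives a \emph{uniform} lower bound $\underline\varphi>0$ on a neighbourhood $U$, so that $\{\sigma_\delta<\tau_\gamma\}\subset\{\sigma_\delta\le (A(\gamma)-a)/\underline\varphi\}$ and $\E[\tau_\gamma\wedge\sigma_\delta]\ge\frac{A(\gamma)-a}{\overline\varphi}\,\P(\sigma_\delta>\tau_\gamma)$. The ratio is then controlled by $\P_x(\sigma_\delta<t)/t$ as $t\downarrow 0$, which the paper shows vanishes via the exponential estimate $\P_x(\sigma_\delta<t)\le C e^{-c/t^2}$ derived in the appendix. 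Second, the conclusion is extended from $\{\varphi>0\}$ to all of $\{x\le\alpha(y)\}$ by a topological argument: if the open set $U_0=\{x\le\alpha(y)\}\setminus\mathrm{cl}\{\varphi>0\}$ were non-empty one would contradict $S_\theta=(x_L,\alpha(\theta)]$. Your proof, by working directly at an arbitrary $(x_0,y_0)$ with $x_0<\alpha(y_0)$, forgoes the lower bound $\underline\varphi>0$ and therefore cannot carry out the quantitative step; the boundary reduction via $y_n\downarrow y_0$ does not help here, since the issue already arises in the interior.
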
 
\begin{proof}
Denote by $\tilde{v}$ the value function of the problem \eqref{eqn:tilde_v}
with $(A_{t})$ given by \eqref{eqn:symm_A}. First notice that there
exists $(x,\theta)\in\cI\times(\theta_{L},\theta_{U}]$ such that $\varphi(x,\theta)>0$.
Otherwise, we would have a contradiction with $S_{\theta}=(x_L,\alpha(\theta)]$
asserted in Lemma \ref{lem:S_theta_interval}.

Fix $(x,\theta)$ such that $\varphi(x,\theta)>0$. Denote $a=A(\theta)$
and \edt{ 
\[
\Gamma(x',a')=e^{-a'}\big(D(x')-r\theta+\varphi^{*}(x',Y(a'))(m(x')-\theta)\big).
\]
} Take $\ve>0$. \edt{By the lower semi-continuity of $\varphi$
and the upper semi-continuity of $\Gamma$, using Lemma \ref{lem:semicont},}
there is $\delta>0$ such that $U:=[x-\delta,x+\delta]\times[\theta-\delta,\theta]\subset\cI\times[\theta_{L},\theta_{U}]$,
\begin{equation}
\inf_{(x',y')\in U}\varphi(x',y'))>\varphi(x,\theta)/2,\qquad\text{and}\qquad\sup_{(x',y')\in U} \Gamma(x',A(y'))\le\Gamma(x,a) + \ve.\label{eqn:U_bounds}
\end{equation}
Recall the definition \eqref{eqn:tau_A_a} of an optimal stopping
time $\tau_{\gamma}$ for $\tilde{v}(x,a;\gamma)$ for $\theta-\delta\le\gamma\le\theta$.
Let $\sigma_{\delta}=\inf\{t\ge0:\ X_{t}\notin(x-\delta,x+\delta)\}$.
By the optimality of $\tau_{\gamma}$ we have 
\begin{align*}
\tilde{v}(x,a;\gamma) & =\E_{xa}\Big[\int_{0}^{\tau_{\gamma}}e^{-rs-A_{s}}\big(D(X_{s})-r\gamma+\varphi(X_{s},Y(A_{s}))(m(X_{s})-\gamma)\big)ds\Big]\\
 & =\E_{xa}\Big[\int_{0}^{\tau_{\gamma}\wedge\sigma_{\delta}}e^{-rs-A_{s}}\big(D(X_{s})-r\gamma+\varphi(X_{s},Y(A_{s}))(m(X_{s})-\gamma)\big)ds\\
 & \hspace{34pt}+\ind{\sigma_{\delta}<\tau_{\gamma}}\int_{\tau_{\gamma}\wedge\sigma_{\delta}}^{\tau_{\gamma}}e^{-rs-A_{s}}\big(D(X_{s})-r\gamma+\varphi(X_{s},Y(A_{s}))(m(X_{s})-\gamma)\big)ds\Big]\\
 & \le\E_{xa}\Big[\int_{0}^{\tau_{\gamma}\wedge\sigma_{\delta}}e^{-rs-A_{s}}\big(D(X_{s})-r\gamma+\varphi(X_{s},Y(A_{s}))(m(X_{s})-\gamma)\big)ds\\
 & \hspace{34pt}+\ind{\sigma_{\delta}<\tau_{\gamma}}e^{-r\sigma_{\delta}}\tilde{v}(X_{\sigma_{\delta}},A_{\sigma_{\delta}};\gamma)\Big]\\
 & \edt{\le\E_{xa}\Big[\int_{0}^{\tau_{\gamma}\wedge\sigma_{\delta}}e^{-rs-A_{s}}\big(D(X_{s})-r\gamma+\varphi^{*}(X_{s},Y(A_{s}))(m(X_{s})-\gamma)\big)ds}\\
 & \hspace{34pt}+\edt{\ind{\sigma_{\delta}<\tau_{\gamma}}e^{-r\sigma_{\delta}}\tilde{v}(X_{\sigma_{\delta}},A_{\sigma_{\delta}};\gamma)\Big],}
\end{align*}
where the first inequality follows from the strong Markov property
and the definition of $\tilde{v}(X_{\sigma_{\delta}},A_{\sigma_{\delta}};\gamma)$,
\edt{and the second inequality uses $\varphi^{*}\ge\varphi$.} 

We transform the final expression above and use $\tilde{v}\ge0$ to
obtain 
\begin{equation}
\begin{aligned}0 & \le\E_{xa}\Big[\int_{0}^{\tau_{\gamma}\wedge\sigma_{\delta}}e^{-rs}\Gamma(X_{s},A_{s})ds\Big]\\
 & \hspace{12pt}+\E_{xa}\Big[\int_{0}^{\tau_{\gamma}\wedge\sigma_{\delta}}e^{-rs-A_{s}}(r+\varphi^{*}(X_{s},Y(A_{s}))(\theta-\gamma)ds\Big]\\
 & \hspace{12pt}+\E_{xa}\Big[\ind{\sigma_{\delta}<\tau_{\gamma}}e^{-r\sigma_{\delta}}\tilde{v}(X_{\sigma_{\delta}},A_{\sigma_{\delta}};\gamma)\Big]=(I)+(II)+(III).
\end{aligned}
\label{eqn:split}
\end{equation}
We divide both sides by $\E_{xa}[\tau_{\gamma}\wedge\sigma_{\delta}]$;
this can be done as $\P_{xa}(\tau_{\gamma}\wedge\sigma_{\delta}>0)=1$
due to the 0-1 law. Using the bounds \eqref{eqn:U_bounds} the first
terms yields 
\[
\frac{(I)}{\E_{xa}[\tau_{\gamma}\wedge\sigma_{\delta}]}\le\Gamma(x,a)+\ve.
\]
Recall that $\lambda$ is bounded from above, while Lemma \ref{lem:phi_bound}
shows that $\varphi\le\lambda$. Since $\varphi^{*}\le\sup_{(x,y)}\varphi(x,y)$,
we conclude that $r+\varphi^*$ is bounded above by some constant $C_{1}$
and 
\[
\frac{(II)}{\E_{xa}[\tau_{\gamma}\wedge\sigma_{\delta}]}\le\frac{\E_{xa}[\tau_{\gamma}\wedge\sigma_{\delta}]C_{1}(\theta-\gamma)}{\E_{xa}[\tau_{\gamma}\wedge\sigma_{\delta}]}=C_{1}(\theta-\gamma).
\]
To estimate the last term, notice that $\tilde{v}(x',a';\gamma)=v(x',a';\gamma)-e^{-a'}\gamma\le m(x')-e^{-a'}\gamma\le C_{2}$
for some $C_{2}>0$ since the function $m$ is bounded. Then 
\[
\frac{(III)}{\E_{xa}[\tau_{\gamma}\wedge\sigma_{\delta}]}\le C_{2}\frac{\P_{xa}(\sigma_{\delta}<\tau_{\gamma})}{\E_{xa}[\tau_{\gamma}\wedge\sigma_{\delta}]}.
\]
We apply the above estimates to the right-hand side of \eqref{eqn:split}
and take limit as $\gamma\uparrow\theta$: 
\begin{equation}
0\le\Gamma(x,a)+\ve+0+C_{2}\lim_{\gamma\uparrow\theta}\frac{\P_{xa}(\sigma_{\delta}<\tau_{\gamma})}{\E_{xa}[\tau_{\gamma}\wedge\sigma_{\delta}]}.\label{eqn:5.9}
\end{equation}
Let $\underline{\varphi}=\inf_{(x',y')\in U}\varphi(x',y')\ge\frac{1}{2}\varphi(x,\theta)$
by \eqref{eqn:U_bounds} and $\overline{\varphi}=\sup_{(x',y')\in U}\varphi(x',y')<\infty$
by $\varphi \le \lambda$. Those bounds on
$\varphi$ allow us to bound the numerator and denominator under the
limit: 
\[
\{\sigma_{\delta}<\tau_{\gamma}\}=\{A_{\sigma_{\delta}}\le A(\gamma)\}\subseteq\{a+\underline{\varphi}\sigma_{\delta}\le A(\gamma)\}=\Big\{\sigma_{\delta}\le\frac{A(\gamma)-a}{\underline{\varphi}}\Big\},
\]
and 
\[
\E_{xa}[\tau_{\gamma}\wedge\sigma_{\delta}]\ge\E_{xa}[\ind{\sigma_{\delta}>\tau_{\gamma}}\tau_{\gamma}]\ge\E_{xa}\Big[\ind{\sigma_{\delta}>\tau_{\gamma}}\frac{A(\gamma)-a}{\overline{\varphi}}\Big]=\frac{A(\gamma)-a}{\overline{\varphi}}\P_{xa}(\sigma_{\delta}>\tau_{\gamma}).
\]
Combining these estimates yields 
\[
\lim_{\gamma\uparrow\theta}\frac{\P_{xa}(\sigma_{\delta}<\tau_{\gamma})}{\E_{xa}[\tau_{\gamma}\wedge\sigma_{\delta}]}\le\lim_{\gamma\uparrow\theta}\frac{\P_{xa}\Big(\sigma_{\delta}\le\frac{A(\gamma)-a}{\underline{\varphi}}\Big)}{\frac{A(\gamma)-a}{\overline{\varphi}}\P_{xa}(\sigma_{\delta}>\tau_{\gamma})}=\underline{\varphi}/\overline{\varphi}\lim_{\gamma\uparrow\theta}\frac{\P_{xa}\Big(\sigma_{\delta}\le\frac{A(\gamma)-a}{\underline{\varphi}}\Big)}{\frac{A(\gamma)-a}{\underline{\varphi}}},
\]
where the last equality follows from $\lim_{\gamma\uparrow\theta}\P_{xa}(\sigma_{\delta}>\tau_{\gamma})=1$
(as $a=A(\theta)$). It remains to study the asymptotic behaviour
of $\sigma_{\delta}$ near $0$, i.e., the limit $\lim_{t\downarrow0}\P_{x}(\sigma_{\delta}<t)/t$.
It is well known that $\P_{x}(\sigma_{\delta}<t)$ decreases exponentially
fast as it does for the Brownian motion, but we could not find a direct
reference for this fact. For completeness, we provide a derivation
of a bound for this probability in the appendix. From \eqref{eqn:decay},
we have 
\[
\P_{xa}(\sigma_{\delta}<t)\le C_{3}e^{-C_{4}/t^{2}},
\]
for some constants $C_{3},C_{4}>0$. It is now immediate to see that
\[
\lim_{t\downarrow0}\P_{xa}(\sigma_{\delta}<t)/t\le C_{3}\lim_{t\downarrow0}e^{-C_{4}/t^{2}}/t=0.
\]

Returning to \eqref{eqn:5.9}, we have shown that $\Gamma(x,a)+\ve\ge0$
for any $\ve>0$, i.e., $\Gamma(x,a)\ge0$. This implies that 
\[
D(x)-r\theta+\varphi^{*}(x,Y(a))(m(x)-\theta)\ge0,
\]
which is equivalent to $\varphi^{*}(x,\theta)\ge\lambda(x,\theta)$
upon recollection that $a=A(\theta)$.

We have therefore demonstrated that \edt{$\varphi^{*}(x,\theta)\ge\lambda(x,\theta)$
for $(x,\theta)$ such that $\varphi(x,\theta)>0$.} Define
$\cO_{+}=\{(x,\theta)\in\cI\times[\theta_{L},\theta_{U}]:\ \varphi(x,\theta)>0\}$
and $\cO_{\alpha}=\{(x,\theta)\in\cI\times[\theta_{L},\theta_{U}]:x\le\alpha(\theta)\}$.
Due to the upper bound $\varphi\le\lambda$ (see Lemma \ref{lem:phi_bound})
and the fact that $\lambda\equiv0$ on the complement of $\cO_{\alpha}$,
we have $\cO_{+}\subset\cO_{\alpha}$. \edt{By the upper semi-continuity
of $\varphi^{*}$ and the continuity of $\lambda$ on $\cO_{\alpha}$,
we further have that $\varphi^{*}\ge\lambda$ on $\text{cl}(\cO_{+})$,}
where $\text{cl}(\cdot)$ denotes the closure. Let $U_{0}=\cO_{\alpha}\setminus\text{cl}(\cO_{+})$.
This is a relatively open set in $\cO_\alpha$. Furthermore, $\varphi\equiv0$
on $U_{0}$. Assume that $U_{0}$ is non-empty. Due to the closedness of $\cO_\alpha$, it has a non-empty interior. Take any $(x,\theta)$ in the interior of $U_{0}$. We immediately have that $\P_{xA(\theta)}(\tau_{\theta}>0)=1$. However,
$(x,\theta)\in\cO_{\alpha}$, so $x\le\alpha(\theta)$ and, by Lemma
\ref{lem:S_theta_interval}, $x\in S_{\theta}$. This means that $\P_{xA(\theta)}(\tau_{\theta}=0)=1$,
a contradiction. This completes the proof that $\varphi^{*}(x,\theta)\ge\lambda(x,\theta)$
for $(x,\theta)\in\cO_{\alpha}$. Recall that $\lambda\equiv0$ on
the complement of $\cO_{\alpha}$. Since $\varphi^{*}$ is non-negative,
it trivially dominates $\lambda$ on the complement of $\cO_{\alpha}$,
which finishes the proof. 
\end{proof}
Combining Lemma \ref{lem:phi_bound} and \ref{lem:phi_lower} yields
the proof of the main result of this section.

\edt{
\begin{proof}[Proof of Theorem \ref{thm:unique}]
Notice that $\lambda$ is upper semi-continuous and it majorises
$\varphi$ by Lemma \ref{lem:phi_bound}. Hence, it also majorises
$\varphi^{*}$ which is the smallest upper semi-continuous function
dominating $\varphi$. However, $\lambda$ also bounds $\varphi^{*}$
from below, which completes the proof. 
\end{proof}
}

\appendix

\section{Asymptotics of $\sigma_{\delta}$ near $0$}

We provide a sketch of an asymptotic bound for the behaviour of $\P_{x}(\sigma_{\delta}<u)$
as $u\downarrow0$, where $\sigma_{\delta}=\inf\{t\ge0:X_{t}\notin(x-\delta,x+\delta)\}$.
Notice that the probability is identical when the coefficients of
$X_{t}$ are replaced with 
\[
\tilde{\mu}(y)=\mu(y\wedge(x+\delta)\vee(x-\delta)),\qquad\tilde{\volatility}(y)=\volatility(y\wedge(x+\delta)\vee(x-\delta)),
\]
i.e., we can assume that $\mu$ and $\volatility$ in \eqref{eqn:X}
are bounded, continuous and $\volatility$ is uniformly bounded away
from $0$. Consider the change of measure given by 
\[
\frac{d\tilde{\P}_{x}}{d\P_{x}}=\eta_{1},
\]
where 
\[
\eta_{t}=\exp\Big(-\int_{0}^{t}\mu(X_{s})/\volatility(X_{s})dW_{s}-\int_{0}^{t}\mu^{2}(X_{s})/\volatility^{2}(X_{s})ds\Big).
\]
Then for $u\le1$, we have 
\[
\P_{x}(\sigma_{\delta}<u)=\tilde{\E}_{x}(\ind{\sigma_{\delta}<u}\eta_{1}^{-1})\le\big(\tilde{P}_{x}(\sigma_{\delta}<u)\big)^{1/2}\|\eta_{1}^{-1}\|_{L^{2}}=c_{1}\big(\tilde{P}_{x}(\sigma_{\delta}<u)\big)^{1/2}
\]
for some constant $c_{1}>0$ and 
\[
dX_{t}=b(X_{t})d\tilde{W}_{t},\quad X_{0}=x
\]
for $\tilde{\P}_{x}$-Brownian motion $\tilde{W}_{t}$. Consider $\Lambda_{t}=\int_{0}^{t}\volatility^{2}(X_{s})ds$
and the time change $T_{t}$ being the inverse of $\Lambda_{t}$ which
exists since $\volatility$ is separated from $0$. Then $Y_{t}=X_{T_{t}}$
is a Brownian motion. We have the sequence of inclusions: 
\[
\{\sigma_{\delta}<t\}=\Big\{\sup_{u\in[0,t]}|X_{u}-x|\ge\delta\Big\}\subset\Big\{\sup_{u\in[0,\esssup_{\omega\in\Omega}\Lambda_{t}(\omega)]}|Y_{u}-x|\ge\delta\Big\},
\]
where we used that $X_{t}=Y_{\Lambda_{t}}$. Let $\bar{b}=\sup_{y\in[x-\delta,x+\delta]}\volatility(y)>0$.
Then $\Lambda_{t}\le\bar{b}t$ and 
\[
\Big\{\sup_{u\in[0,\esssup_{\omega\in\Omega}\Lambda_{t}(\omega)]}|Y_{u}-x|\ge\delta\Big\}\subset\Big\{\sup_{u\in[0,t\bar{b}]}|Y_{u}-x|\ge\delta\Big\}.
\]
This gives us the following estimate 
\begin{align*}
\tilde{\P}_{x}(\sigma_{\delta}<t) & \le\tilde{P}_{x}\Big(\sup_{u\in[0,t\bar{b}]}|Y_{u}-x|\ge\delta\Big)=\tilde{P}_{0}\Big(\sup_{u\in[0,t\bar{b}]}|Y_{u}|\ge\delta\Big)\\
 & \le2\tilde{P}_{0}\Big(\sup_{u\in[0,t\bar{b}]}Y_{u}\ge\delta\Big)\le2\sqrt{\frac{2}{\pi}}\int_{\delta/(t\bar{b})}^{\infty}e^{-z^{2}/2}dz\le c_{2}e^{-c_{3}/t^{2}},
\end{align*}
where the penultimate inequality follows from \cite[Proposition~3.7, Chapter III]{revuzyor}
and the last inequality holds for sufficiently small $t$ (precisely,
$t$ such that $\delta/(t\bar{b})\ge1$). Combining together the above
estimates yields 
\begin{equation}
\P_{x}(\sigma_{\delta}<u)\le c_{1}\sqrt{c_{2}}e^{-0.5c_{3}/u^{2}}\label{eqn:decay}
\end{equation}
for sufficiently small $u$.

\section{Example from the text}

\label{app:example}

Below we provide the proof that the model
in Example \ref{xmp:1} satisfies all the assumptions of the paper.
First, we obtain the explicit form of $d(x)$ and $m(x)$. Clearly,
$m(x)=d(x)+\frac{M_{0}}{r}$. To find $d$, we utilise the fact that
$(\mathcal{L}_{X}-r)d(x)+D(x)=0$ for $x\in(0,x_{M})\cup(x_{M},\infty)$
and that $d(x)$ is a continuously differentiable bounded function.
It can be verified that (see \cite{Alvarez2008}) 
\begin{align*}
d(x) & =\begin{cases}
\frac{x^{\beta}}{r-\delta}+c_{1}(x_{M})\psi(x), & x\in(0,x_{M}],\\
\frac{x_{M}^{\beta}}{r}+c_{2}(x_{M})\phi(x), & x>x_{M},
\end{cases}
\end{align*}
where $\delta$ was defined after \eqref{eqn:ass_exmpl}.
Because $d(\cdot)$ must be a bounded function,
the fundamental solutions $\phi(\cdot)$ and $\psi(\cdot)$ do not
show respectively in the general form of the solution $d(x)$ in the
intervals $(0,x_{M}]$ and $(x_{M},\infty)$. The coefficients $c_{1}(x_{M})$
and $c_{2}(x_{M})$ are determined by the continuity and the differentiability
of $d(\cdot)$ at $x_{M}$: 
\begin{align*}
\frac{x_{M}^{\beta}}{r-\delta}+c_{1}(x_{M})\psi(x_{M}) & =\frac{x_{M}^{\beta}}{r}+c_{2}(x_{M})\phi(x_{M})\\
\frac{\beta x_{M}^{\beta-1}}{r-\delta}+c_{1}(x_{M})\psi'(x_{M}) & =c_{2}(x_{M})\phi'(x_{M})\:,
\end{align*}
from which we obtain 
\begin{align*}
c_{1}(x_{M}) & =\frac{1}{\xi(x_{M})}[-\phi'(x_{M})x_{M}^{\beta}(\frac{1}{r}-\frac{1}{r-\delta})-\beta\phi(x_{M})\frac{x_{M}^{\beta-1}}{r-\delta}],\\
c_{2}(x_{M}) & =\frac{1}{\xi(x_{M})}[-\psi'(x_{M})x_{M}^{\beta}(\frac{1}{r}-\frac{1}{r-\delta})-\beta\psi(x_{M})\frac{x_{M}^{\beta-1}}{r-\delta}]
\end{align*}
with $\xi(x)=\phi(x)\psi'(x)-\psi(x)\phi'(x)=(\gamma_{+}-\gamma_{-})x^{\gamma_{+}+\gamma_{-}-1}$.
Using the definition of $\phi(\cdot)$, we simplify expression for
$c_{1}(x_{M})$ as 
\[
c_{1}(x_{M})=\frac{\gamma_{-}\delta-\beta r}{(\gamma_{+}-\gamma_{-})r(r-\delta)}x_{M}^{\beta-\gamma_{+}}\:.
\]
Its numerator can be further rewritten as follows: 
\begin{align}
\gamma_{-}\delta-\beta r & =\beta[\frac{b^{2}}{2}\gamma_{-}(\beta-1)+\mu\gamma_{-}-r]\nonumber \\
 & =\beta[\frac{b^{2}}{2}\gamma_{-}(\gamma_{-}-1)+\mu\gamma_{-}-r]+\beta\frac{b^{2}}{2}\gamma_{-}(\beta-\gamma_{-})=\beta\frac{b^{2}}{2}\gamma_{-}(\beta-\gamma_{-}),\label{eq:gamma-delta}
\end{align}
where we used the equality $\frac{b^{2}}{2}\gamma_{-}(\gamma_{-}-1)+\mu\gamma_{-}-r=0$
satisfied by $\gamma_{-}$. Since $\gamma_{-}<0$ and $\beta-\gamma_{-}>0$,
it follows that $c_{1}(x_{M})<0$. Furthermore, using that $\gamma_{+}>1>\beta$,
we have $\lim_{x_{M}\to\infty}c_{1}(x_{M})=0$.

We now show that $D(\cdot)$ and $M(\cdot)$ satisfy
all the assumptions given in the paper if $x_{M}$ is taken sufficiently
large. First, Assumptions \ref{ass:theta_supp}--\ref{ass:coeff}
are trivially satisfied. Our remaining task is to show that Assumption
\ref{assump:Li} is satisfied. Assumption \ref{assump:Li}(i) holds
because $D(\cdot)$ is increasing from zero to $x_{M}^{\beta}>r\theta_{U}$.
By Remark \ref{rem:a'}, Assumption \ref{assump:Li}(ii) and (iii)
are satisfied if there is $x^{*}=\alpha(\theta)$ such that $a_{\theta}'(x)>0$
for $x<x^{*}$ and $a_{\theta}'(x)<0$ for $x>x^{*}$. Hence, we examine
the derivative of $a_{\theta}(x)=(\theta-d(x))/\phi(x)$ given by
\begin{equation}
a_{\theta}'(x)=\begin{cases}
-\gamma_{-}\theta x^{-\gamma_{-}-1}-(\beta-\gamma_{-})\frac{x^{\beta-\gamma_{-}-1}}{r-\delta}-(\gamma_{+}-\gamma_{-})c_{1}(x_{M})x^{\gamma_{+}-\gamma_{-}-1}, & x\le x_{M},\\
-\gamma_{-}(\theta-\frac{x_{M}^{\beta}}{r})x^{-\gamma_{-}-1}, & x>x_{M}.
\end{cases}\label{eqn:exp_1}
\end{equation}
From \eqref{eqn:exp_1}, $a_{\theta}'(x)<0$ for $x>x_{M}$, because
$\theta-x_{M}^{\beta}/r\le\theta_{U}-x_{M}^{\beta}/r<0$ by the
assumption that $x_{M}^{\beta}>r\theta_{U}$. We also note that $a_{\theta}'(x)>0$
for sufficiently small values of $x$ because of \eqref{eqn:ass_exmpl}
and $\gamma_{+}-\gamma_{-}>0$. We shall show that $a_{\theta}'(x)$
is decreasing on $(0,x_{M})$ for sufficiently large $x_{M}$, which,
together with the above observations about $a_{\theta}'(x)$ for $x$
close to $0$ and $x>x_{M}$ allows us to conclude that there is $x^{*}=\alpha(\theta)$
such that $a_{\theta}'(x)>0$ for $x<x^{*}$ and $a_{\theta}'(x)<0$
for $x>x^{*}$.

For convenience, we express $a_{\theta}'(x)=A(x)-B(x)+C(x)$
for $x\in(0,x_{M})$, where $A(\cdot)$, $B(\cdot)$, $C(\cdot)$
are positive functions given by 
\[
A(x)=-\gamma_{-}\theta x^{-\gamma_{-}-1};\quad B(x)=(\beta-\gamma_{-})\frac{x^{\beta-\gamma_{-}-1}}{r-\delta};\quad C(x)=-(\gamma_{+}-\gamma_{-})c_{1}(x_{M})x^{\gamma_{+}-\gamma_{-}-1}\,.
\]
Note that $A(\cdot)$ is decreasing while $B(\cdot)$ and $C(\cdot)$
are increasing. Furthermore, it can be easily checked that $C(x)/B(x)$
increases in $x$ because $\gamma_{+}>1>\beta$ and $c_{1}(x_{M})<0$.
Hence, within the interval $(0,x_{M}]$, $C(x)/B(x)$ takes the maximum
value at $x_{M}$, which is given by 
\begin{align*}
g & :=\frac{C(x_{M})}{B(x_{M})}=\frac{(r-\delta)}{(\beta-\gamma_{-})}\cdot(\gamma_{+}-\gamma_{-})\frac{-(\gamma_{-}\delta-\beta r)}{(\gamma_{+}-\gamma_{-})r(r-\delta)}=\frac{\beta b^{2}\vert\gamma_{-}\vert}{2r},
\end{align*}
where we used the alternative expression of $\gamma_{-}\delta-\beta r$
in (\ref{eq:gamma-delta}). Here we have $g<1$ from the assumption
that $1>\beta b^{2}\vert\gamma_{-}\vert/(2r)$. Thus, we obtain for
$x\le x_{M}$ 
\begin{equation}
B(x)-C(x)=B(x)\Big(1-\frac{C(x)}{B(x)}\Big)\ge B(x)\Big(1-\frac{C(x_{M})}{B(x_{M})}\Big)>0\:.\label{eqn:B_C}
\end{equation}

Fix $\epsilon>0$. There is $y_{\epsilon}>0$
such that $A(y_{\epsilon})<\epsilon$ and $B(y_{\epsilon})(1-g)>2\epsilon$.
We compute the derivative of $B(x)-C(x)$ to judge its monotonicity
on $(0,y_{\epsilon})$: 
\[
B'(x)-C'(x)=x^{\beta-\gamma_{-}-2}\Big[\frac{(\beta-\gamma_{-})(\beta-\gamma_{-}-1)}{r-\delta}+(\gamma_{+}-\gamma_{-})(\gamma_{+}-\gamma_{-}-1)C_{1}(x_{M})x^{\gamma_{+}-\beta}\Big].
\]
By \eqref{eqn:ass_exmpl}, we have $\beta-\gamma_{-}-1>0$, $\gamma_{+}-\gamma_{-}-1>0$
and $\gamma_{+}-\beta>0$. Recalling that $C_{1}(x_{M})<0$ and converges
to $0$ as $x_{M}\to\infty$, there is $x_{M}>y_{\epsilon}$ that
satisfies assumptions stated previously and such that $B(x)-C(x)$
is increasing on $(0,y_{\epsilon})$. Since $A(x)$ is decreasing,
this implies that $a'_{\theta}(x)$ is decreasing for $x\in(0,y_{\epsilon})$.
Using \eqref{eqn:B_C} and the fact that $B$ is increasing, we have
$B(x)-C(x)>B(x)(1-g)>B(y_{\epsilon})(1-g)>2\epsilon$ for $x\in(y_{\epsilon},x_{M}]$.
Thus, $A(x)-B(x)+C(x)<\epsilon-2\epsilon<0$ for $x\in(y_{\epsilon},x_{M}]$,
i.e., $a'_{\theta}$ is decreasing. We conclude that there is a unique
value of $x=\alpha(\theta)$ that satisfies $a_{\theta}'(x)=0$ within
the interval $(0,x_{M}]$. We further recall that $a'_{\theta}(x)<0$
for $x>x_{M}$. This establishes that $a_{\theta}'(x)>0$ for $x<\alpha(\theta)$
and $a_{\theta}'(x)<0$ for $x>\alpha(\theta)$.

 \bibliographystyle{MOR}
\bibliography{reference}

\begin{thebibliography}{10}

\bibitem{Alvarez2001}
Alvarez LHR (2001) Reward functionals, salvage values and optimal stopping.
\newblock \emph{Mathematical Methods of Operations Research} 54(2):315--337.

\bibitem{Alvarez2008}
Alvarez LHR, Lempa J (2008) On the optimal stochastic impulse control of linear
  diffusions.
\newblock \emph{SIAM Journal on Control and Optimization} 47(2):703--732.

\bibitem{Attard2018}
Attard N (2018) Nonzero-sum games of optimal stopping for {Markov} processes.
\newblock \emph{Applied Mathematics \& Optimization} 77:567–597.

\bibitem{Bensoussan1977}
Bensoussan A, Friedman A (1977) Nonzero-sum stochastic differential games with
  stopping times and free boundary value problem.
\newblock \emph{Transactions of American Mathematical Society} 213:275--327.

\bibitem{Blumenthal}
Blumenthal R, Getoor R (1968) \emph{Markov Processes and Potential Theory}
  (Academic Press, New York).

\bibitem{Bogachev}
Bogachev VI (2007) \emph{Measure theory. Volume 1} (Springer-Verlag Berlin
  Heidelberg).

\bibitem{Cattaiaux1990}
Cattiaux P, Lepeltier JP (1990) Existence of a quasi-markov nash equilibrium
  for non-zero sum {M}arkov stopping games.
\newblock \emph{Stochastics and Stochastic Reports} 30(2):85--103.

\bibitem{DeAngelis2020ghosts}
{De Angelis} T, Ekstr\"{o}m E (2020) Playing with ghosts in a {D}ynkin game.
\newblock \emph{Stochastic Processes and their Applications}
  130(10):6133--6156.

\bibitem{DeAngelis2020}
De~Angelis T, Ekstr{\"o}m E, Glover K (2022) {D}ynkin games with incomplete and
  asymmetric information.
\newblock \emph{Mathematics of Operations Research} 47:560--586.

\bibitem{DAMP2021}
De~Angelis T, Merkulov N, Palczewski J (2022) On the value of non-{M}arkovian
  {D}ynkin games with partial and asymmetric information.
\newblock \emph{Annals of Applied Probability} 32(3):1774--1813.

\bibitem{Decamps2022}
Décamps JP, Gensbittel F, Mariotti T Mixed-{{Strategy Equilibria}} in the
  {{War}} of {{Attrition}} under {{Uncertainty}} .

\bibitem{Decamps2004}
Décamps JP, Mariotti T (2004) Investment timing and learning externalities.
\newblock \emph{Journal of Economic Theory} 118:80--102.

\bibitem{Ekstrom2022Salami}
Ekstr\"{o}m E, Lindensj\"{o} K, Olofsson M (2022) How to detect a salami
  slicer: A stochastic controller-and-stopper game with unknown competition.
\newblock \emph{SIAM Journal on Control and Optimization} 60(1):545--574.

\bibitem{Elfenbein2015}
Elfenbein DW, Knott AM (2015) Time to exit: Rational, behavioral, and
  organizational delays.
\newblock \emph{Strategic Management Journal} 36(7):957--975.

\bibitem{esmaeeli2018}
Esmaeeli N, Imkeller P (2018) American options with asymmetric information and
  reflected {BSDE}.
\newblock \emph{Bernoulli} 24(2):1394--1426.

\bibitem{Feinstein2022}
Feinstein Z, Rudloff B, Zhang J (2022) Dynamic set values for nonzero-sum games
  with multiple equilibriums.
\newblock \emph{Mathematics of Operations Research} 47(1):616--642.

\bibitem{Filippov}
Filippov A (1988) \emph{Differential Equations with Discontinuous Righthand
  Sides: Control Systems}.
\newblock Mathematics and its Applications (Springer Netherlands).

\bibitem{Fudenberg1986}
Fudenberg D, Tirole J (1986) A theory of exit in duopoly.
\newblock \emph{Econometrica} 54(4):943--960.

\bibitem{Gensbittel2019}
Gensbittel F, Gr{\"u}n C (2019) Zero-sum stopping games with asymmetric
  information.
\newblock \emph{Mathematics of Operations Research} 44(1):277--302.

\bibitem{Georgiadis2019}
Georgiadis G, Kim Y, Kwon HD (2022) The absence of attrition in a war of
  attrition under complete information.
\newblock \emph{Games and Economic Behavior} 131:171--185.

\bibitem{Ghemawat1985}
Ghemawat P, Nalebuff B (1985) Exit.
\newblock \emph{Rand Journal of Economics} 16(2):184--194.

\bibitem{Grun2013}
Gr{\"u}n C (2013) On {Dynkin} games with incomplete information.
\newblock \emph{SIAM Journal on Control and Optimization} 51(5):4039--4065.

\bibitem{Halmos}
Halmos PR (1974) \emph{Measure theory} (Springer-Verlag New York).

\bibitem{Hamadene2014}
Hamad\`{e}ne S, Hassani M (2014) The multiplayer nonzero-sum {Dynkin} game in
  continuous time.
\newblock \emph{SIAM Journal on Control and Optimization} 52(2):821--835.

\bibitem{Hamadene2010}
Hamad\`{e}ne S, Zhang J (2010) The continuous time nonzero-sum {Dynkin} game
  problem and application in game options.
\newblock \emph{SIAM Journal on Control and Optimization} 48(5):3659--3669.

\bibitem{Harrigan2003}
Harrigan KR (2003) \emph{Declining demand, divestiture, and corporate strategy}
  (Beard Books, Washington, D.C.).

\bibitem{Horn2006}
Horn JT, Lovallo DP, Viguerie SP (2006) Learning to let go: Making better exit
  decisions.
\newblock \emph{McKinsey Quarterly} 2:64.

\bibitem{elkaroui1981}
Karoui NE (1981) Les aspects probabilistes du contr{\^o}le stochastique.
\newblock In \emph{{\'E}cole d’{\'e}t{\'e} de Probabilit{\'e}s de Saint-Flour
  IX-1979}, pages 73--238 (Springer).

\bibitem{krylov}
Krylov N (1980) \emph{Controlled Diffusion Processes} (Springer).

\bibitem{Lempa2013}
Lempa J, Matom{\"a}ki P (2013) A {Dynkin} game with asymmetric information.
\newblock \emph{Stochastics} 85(5):763--788.

\bibitem{Milgrom1985}
Milgrom PR, Weber RJ (1985) Distributional strategies for games with incomplete
  information.
\newblock \emph{Mathematics of Operations Research} 10(4):619--632.

\bibitem{Murto2004}
Murto P (2004) Exit in duopoly under uncertainty.
\newblock \emph{RAND Journal of Economics} 35(1):111--127.

\bibitem{Nagai1987}
Nagai H (1987) Non zero-sum stopping games of symmetric markov processes.
\newblock \emph{Probability Theory and Related Fields} 75:487--497.

\bibitem{Nalebuff1985}
Nalebuff B, Riley J (1985) Asymmetric equilibria in the war of attrition.
\newblock \emph{Journal of Theoretical Biology} 113(3):517--527.

\bibitem{Osborne1994}
Osborne MJ, Rubinstein A (1994) \emph{A course in game theory} (MIT press,
  Cambridge, MA).

\bibitem{Perez2021}
P\'{e}rez JL, Rodosthenous N, Yamazaki K (2021) Non-zero-sum optimal stopping
  game with continuous versus periodic observations.
\newblock \emph{arXiv:2107.08243} .

\bibitem{peskir2006optimal}
Peskir G, Shiryaev A (2006) \emph{Optimal stopping and free-boundary problems}
  (Springer).

\bibitem{Ponsati1995}
Ponsati C, S{\'{a}}kovics J (1995) The war of attrition with incomplete
  information.
\newblock \emph{Mathematical Social Sciences} 29(3):239--254.

\bibitem{Protter}
Protter PE (2005) \emph{Stochastic integration and differential equations}.
\newblock 2nd edition (Springer).

\bibitem{revuzyor}
Revuz D, Yor M (1999) \emph{Continuous Martingales and Brownian Motion}
  (Springer-Verlag Berlin Heidelberg).

\bibitem{Riedel2017}
Riedel F, Steg JH (2017) Subgame-perfect equilibria in stochastic timing games.
\newblock \emph{Journal of Mathematical Economics} 72:36--50.

\bibitem{Riley1980}
Riley JG (1980) Strong evolutionary equilibrium and the war of attrition.
\newblock \emph{Journal of Theoretical Biology} 82(3):383 -- 400.

\bibitem{Royden1988}
Royden HL (1988) \emph{{Real analysis}}.
\newblock 3rd edition (Macmillan, New York).

\bibitem{shiryaev2007optimal}
Shiryaev AN (2008) \emph{Optimal stopping rules} (Springer-Verlag, Berlin).

\bibitem{Steg2015}
Steg JH (2015) Symmetric equilibria in stochastic timing games.
\newblock \emph{arXiv:1507.04797} .

\end{thebibliography}

\end{document}